\definecolor{verydarkblue}{rgb}{0,0,0.5}
\theoremstyle{plain}
\newtheorem{introtheorem}{Theorem}
\crefname{introtheorem}{Theorem}{Theorems}
\newtheorem{theorem}{Theorem}[section]
\newtheorem{proposition}[theorem]{Proposition}
\newtheorem{lemma}[theorem]{Lemma}
\newtheorem{corollary}[theorem]{Corollary}
\theoremstyle{definition}
\newtheorem{definition}[theorem]{Definition}
\theoremstyle{remark}
\newtheorem{remark}[theorem]{Remark}
\newtheorem{example}[theorem]{Example}
\numberwithin{figure}{section}
\numberwithin{equation}{section}
\renewcommand*{\backrefalt}[4]{%
    \tiny%
    (%
    \ifcase #1 not cited%
          \or cit.~on~p.~#2%
          \else cit.~on~pp.~#2%
    \fi%
    )%
}
\def\print@backrefs#1{%
    \space\SentenceSpace%
    \begingroup%
        \expandafter\providecommand\csname brc@#1\endcsname{0}%
        \expandafter\providecommand\csname brcd@#1\endcsname{0}%
        \expandafter\backrefalt%
            \csname brc@#1\expandafter\endcsname%
            \csname brl@#1\expandafter\endcsname%
            \csname brcd@#1\expandafter\endcsname%
            \csname brld@#1\endcsname%
    \endgroup%
}
\def\maketitle{\par
  \@topnum\z@ 
  \@setcopyright
  \thispagestyle{empty}
  \ifx\@empty\shortauthors \let\shortauthors\shorttitle
  \else \andify\shortauthors
  \fi
  \@maketitle@hook
  \begingroup
  \@maketitle
  \toks@\@xp{\shortauthors}\@temptokena\@xp{\shorttitle}%
  \toks4{\def\\{ \ignorespaces}}
  \edef\@tempa{%
    \@nx\markboth{\the\toks4
      \@nx\MakeUppercase{\the\toks@}}{\the\@temptokena}}%
  \@tempa
  \endgroup
  \c@footnote\z@
    \renewcommand{\footnoterule}{%
      \kern -3pt
      \hrule width \textwidth height .5pt
      \kern 2pt
    }
  {
    \renewcommand\thefootnote{}
    \vspace{-2em}
    \footnote{
      \par\vspace{-1.2em}\noindent%
      \setlength{\parindent}{0pt}%
      \def\@footnotetext##1{\noindent{\footnotesize##1}\par}%
      \let\@makefnmark\relax  \let\@thefnmark\relax
      \ifx\@empty\@date\else \@footnotetext{\@setdate}\fi
      \ifx\@empty\@subjclass\else \@footnotetext{\@setsubjclass}\fi
      \ifx\@empty\@keywords\else \@footnotetext{\@setkeywords}\fi
      \ifx\@empty\thankses\else \@footnotetext{%
        \@setthanks}%
      \fi
    }
    \addtocounter{footnote}{-1}
  }
  \@cleartopmattertags
}
\def\@adminfootnotes{\@empty}
\def\@settitle{\begin{center}%
  \baselineskip14\p@\relax
    \bfseries
\Large
  \@title
  \end{center}%
}
\def\@setauthors{%
  \begingroup
  \def\thanks{\protect\thanks@warning}%
  \trivlist
  \centering\footnotesize \@topsep30\p@\relax
  \advance\@topsep by -\baselineskip
  \item\relax
  \author@andify\authors
  \def\\{\protect\linebreak}%
  \large{\authors}%
  \ifx\@empty\contribs
  \else
    ,\penalty-3 \space \@setcontribs
    \@closetoccontribs
  \fi
  \endtrivlist
  \endgroup
}
\def\@setaddresses{\par
  \nobreak \begingroup
\footnotesize
  \def\author##1{\end{minipage}\hskip 1sp \begin{minipage}{.5\textwidth}\raggedright%
    ~\\[2em]{\bf##1}\\[.5em]%
  }%
  \interlinepenalty\@M
  \def\address##1##2{\begingroup
    {\ignorespaces##2}\endgroup\\[.5em]}%
  \def\curraddr##1##2{\begingroup
    \@ifnotempty{##2}{\nobreak\indent\curraddrname
      \@ifnotempty{##1}{, \ignorespaces##1\unskip}\/:\space
      ##2\par}\endgroup}%
  \def\email##1##2{\begingroup
    \@ifnotempty{##2}{\nobreak\indent
      \@ifnotempty{##1}{, \ignorespaces##1\unskip}
      \ttfamily##2\par}\endgroup}%
  \def\urladdr##1##2{\begingroup
    \def~{\char`\~}%
    \@ifnotempty{##2}{\nobreak\indent\urladdrname
      \@ifnotempty{##1}{, \ignorespaces##1\unskip}\/:\space
      \ttfamily##2\par}\endgroup}%
  \setlength{\parindent}{0pt}%
  \vfill%
  {
  \begin{minipage}{0mm}
  \addresses
  \end{minipage}
  }
  \endgroup
}
\renewcommand{\author}[2][]{%
  \ifx\@empty\authors
    \gdef\authors{#2}%
    \g@addto@macro\addresses{\author{#2}}%
  \else
    \g@addto@macro\authors{\and#2}%
    \g@addto@macro\addresses{\author{#2}}%
  \fi
  \@ifnotempty{#1}{%
    \ifx\@empty\shortauthors
      \gdef\shortauthors{#1}%
    \else
      \g@addto@macro\shortauthors{\and#1}%
    \fi
  }%
}
\edef\author{\@nx\@dblarg
  \@xp\@nx\csname\string\author\endcsname}
\def\@secnumfont{\@empty}
\def\section{\@startsection{section}{1}%
  \z@{.7\linespacing\@plus\linespacing}{.5\linespacing}%
  {\large\bfseries\centering}}
\renewcommand{\llbracket}{[\hspace{-0.5pt}[}
\renewcommand{\rrbracket}{]\hspace{-0.5pt}]}
\newcommand{\Cont}{{\rm Cont}}
\newcommand{\red}{{\rm red}}
\newcommand{\coker}{\operatorname{coker}}
\newcommand{\jetcodim}{\,{\rm jet.codim}}
\newcommand{\embdim}{\,{\rm emb.dim}}
\newcommand{\trdeg}{\,{\rm tr.deg}}
\newcommand{\ord}{\operatorname{ord}}
\newcommand{\Jac}{\operatorname{Jac}}
\newcommand{\Hom}{\operatorname{Hom}}
\newcommand{\Der}{\operatorname{Der}}
\newcommand{\Spec}{\operatorname{Spec}}
\newcommand{\Spf}{\operatorname{Spf}}
\newcommand{\Fitt}{\operatorname{Fitt}}
\newcommand{\Sing}{\operatorname{Sing}}
\renewcommand{\Im}{\operatorname{Im}}
\newcommand{\HS}{{\rm HS}}
\begin{document}

\title{Differentials on the arc space}

\author{Tommaso de Fernex}

\address[T.\ de Fernex]{%
    Department of Mathematics\\
    University of Utah\\
    155 South 1400 East\\
    Salt Lake City, UT 48112 (USA)%
}

\email{defernex@math.utah.edu}

\author{Roi Docampo}

\address[Roi Docampo]{%
    Department of Mathematics\\
    University of Oklahoma\\
    601 Elm Avenue, Room 423\\
    Norman, OK 73019 (USA)%
}

\email{roi@ou.edu}

\subjclass[2010]{Primary 14E18; Secondary 13N05, 13H99.}
\keywords{Arc spaces, jet schemes, K\"ahler differentials, embedding dimension}

\thanks{The research of the first author was partially supported by NSF Grants
DMS-1402907 and DMS-1700769, and NSF FRG Grant DMS-1265285.}

\begin{abstract}
The paper provides a description of the sheaves of K\"ahler differentials of
the arc space and jet schemes of an arbitrary scheme where these sheaves 
are computed directly from the sheaf
of differentials of the given scheme. Several applications on the structure of
arc spaces are presented.
\end{abstract}

\maketitle

\section{Introduction}

The work of Greenberg, Nash, Kolchin, and Denef--Loeser
\cite{Gre66,Nas95,Kol73,DL99} has set the basis for our understanding of the
structure of arc spaces and their connections to singularities and birational
geometry. Most of the focus in these studies is on the reduced structure of arc
spaces and their underlying topological spaces, and little is known about their
scheme structure. Notable studies in this direction are those of Reguera
\cite{Reg06,Reg09}, recently continued in \cite{Reg,MR}. 

This paper can be viewed as a continuation of these studies. In the first part
we describe the sheaves of K\"ahler differentials of the arc space and of the
jet schemes.
The second part of the paper is devoted to applications of the
resulting formulas. The approach leads to new results as well as simpler and
more direct proofs of some of the theorems in the literature, and provides a
new way of understanding some of the fundamental properties of the theory.  

In the first part of the paper we work over an arbitrary base scheme $S$.
For simplicity, in the introduction we restrict to the case of
a scheme $X$ over a field $k$; 
the reader will not loose too much of the spirit of the paper
by even assuming that $X$ is a variety.
The \emph{arc space} of $X$ over $S$, denoted
$X_\infty$ parametrizes formal arcs on $X$, 
and comes equipped with a universal family known as the \emph{universal arc}:
\[
\xymatrix{
    U_\infty \ar[r]^{\gamma_\infty} \ar[d]_{\rho_\infty} &
    X \\
    X_\infty  & 
}
\]
More concretely, if $X \subset \mathbb A^n$ is an affine scheme
defined by polynomial equations $f_j = 0$, 
then an arc on $X$ is a vector of power series $(x_1(t),\dots,x_n(t))$ 
with coefficients in a field satisfying identically the equations $f_j = 0$. 
The coefficients of the power series define coordinates on $X_\infty$. 
The universal arc is a vector of power series $(x_1(t),\dots,x_n(t))$
whose coefficients are the coordinate functions on $X_\infty$.


%

As a word of warning, $X_\infty$ and $U_\infty$ are typically not Noetherian,
even when $X$ is.
On the other hand, they are affine over $X$ and this gives us
concrete tools to work with them. 
We note that all the theorems in this paper are local in nature, and thus
one can restrict without loss of generality to the case
where $X$ is affine. 

On $U_\infty$, we construct a sheaf $\mathcal
P_\infty$ whose $\mathcal O_{X_\infty}$-dual is $\mathcal O_{U_\infty}$.
The sheaf $\mathcal P_\infty$ plays the role of ``kernel''
(as in a sort of Fourier--Mukai transform) in the next
formula which relates the sheaf of differentials of $X_\infty$ directly to
the sheaf of differentials of $X$. 

\begin{introtheorem}[K\"ahler differentials on arc spaces]
\label{th:differentials-arc-space}
There is a natural isomorphism
\[
    \Omega_{X_\infty} 
    \simeq 
    \rho_{\infty*}(\gamma_\infty^*(\Omega_{X}) 
    \otimes
    \mathcal P_\infty).
\]
\end{introtheorem}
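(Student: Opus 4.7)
My plan is to verify the isomorphism by a Yoneda argument: I would test both sides against arbitrary quasi-coherent sheaves $\mathcal{M}$ and exhibit a natural identification of the resulting functors. I would begin with the finite-level analogue
\[
    \Omega_{(X/S)_n/S}
    \;\simeq\;
    \rho_{n*}\bigl(\gamma_n^*\Omega_{X/S} \otimes \mathcal{P}_n\bigr)
\]
for the jet scheme $(X/S)_n$, where $\rho_n\colon U_n \to (X/S)_n$ is finite flat of rank $n+1$ and $\mathcal{P}_n$ is the $\mathcal{O}_{(X/S)_n}$-linear dual of $\mathcal{O}_{U_n}$, and only afterwards pass to the arc space.

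On the derivation side, the universal property of the jet scheme, via the ring identity
\[
    \mathcal{O}_{(X/S)_n}[\mathcal{M}][t]/(t^{n+1})
    \;=\;
    \mathcal{O}_{U_n}[\rho_n^*\mathcal{M}],
\]
turns lifts of $\mathrm{id}_{(X/S)_n}$ in $(X/S)_n(\mathcal{O}_{(X/S)_n}[\mathcal{M}])$ into lifts of $\gamma_n$ across the square-zero thickening of $U_n$ by $\rho_n^*\mathcal{M}$, so that
\[
    \Der_S(\mathcal{O}_{(X/S)_n}, \mathcal{M}) \;\simeq\; \Hom_{\mathcal{O}_{U_n}}(\gamma_n^*\Omega_{X/S}, \rho_n^*\mathcal{M}).
\]
On the other side, the defining duality of $\mathcal{P}_n$ together with Hom-tensor adjunction—essentially Grothendieck duality for a finite flat morphism with invertible relative dualizing sheaf—yields
\[
    \Hom_{(X/S)_n}\bigl(\rho_{n*}(\gamma_n^*\Omega_{X/S} \otimes \mathcal{P}_n),\, \mathcal{M}\bigr)
    \;\simeq\;
    \Hom_{\mathcal{O}_{U_n}}\bigl(\gamma_n^*\Omega_{X/S},\, \rho_n^*\mathcal{M}\bigr),
\]
matching the derivation side. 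Yoneda then produces the jet-level isomorphism, and a canonical representing map can be extracted by unwinding the adjunctions, starting from the universal derivation $d_{X/S}$ and the natural projection $\Omega_{U_n/S} \to \rho_n^*\Omega_{(X/S)_n/S}$ afforded by the product decomposition $U_n \simeq (X/S)_n \times \Spec \mathbb{Z}[t]/(t^{n+1})$.

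The main obstacle will be the passage from the jet schemes to the arc space, since $(X/S)_\infty$ is not of finite type over $S$ and Kähler differentials do not commute with inverse limits in any naive way. Rather than take limits of the jet-level isomorphism, I would redo the derivation computation directly at the arc-space level using the ring identity $R[\mathcal{M}]\llbracket t\rrbracket = R\llbracket t\rrbracket[\mathcal{M}\llbracket t\rrbracket]$, which promotes a square-zero thickening of $\mathcal{O}_{(X/S)_\infty}$ by $\mathcal{M}$ to a thickening of $\mathcal{O}_{U_\infty}$ by the formal power-series module $\mathcal{M}\llbracket t\rrbracket$. The characterizing property of $\mathcal{P}_\infty$—that its $\mathcal{O}_{(X/S)_\infty}$-dual is $\mathcal{O}_{U_\infty}$—should, via an appropriately completed Hom-tensor adjunction, encode a more general identification $\mathcal{H}om_{\mathcal{O}_{(X/S)_\infty}}(\mathcal{P}_\infty, \mathcal{M}) \simeq \mathcal{M}\llbracket t\rrbracket$, so that the same Yoneda argument applies intact. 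Formulating these completed adjunctions precisely, and verifying their compatibility with the jet-level pieces so that the resulting isomorphism is indeed natural, constitute the bulk of the technical work.
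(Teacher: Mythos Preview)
Your proposal is correct and follows essentially the same route as the paper: a Yoneda argument that first identifies $\Der_S(\mathcal O_{(X/S)_n},\mathcal M)$ with $\Hom_{\mathcal O_X}(\Omega_{X/S},\mathcal M\hat\otimes\mathcal O_{U_n})$ via the universal property of the jet/arc functor applied to a square-zero thickening, and then rewrites the target using the duality $\Hom_{\mathcal O_{(X/S)_n}}(\mathcal P_n,\mathcal M)\simeq \mathcal M\hat\otimes\mathcal O_{U_n}$ together with Hom--tensor adjunction. The only remark is that the paper treats finite $n$ and $n=\infty$ uniformly from the start (using $\hat\otimes$ throughout), so the ``bulk of the technical work'' you anticipate at the arc level is in fact no heavier than the jet case once the identity $\Hom_{A_\infty}(P_\infty,M)\simeq M\llbracket t\rrbracket$ is checked directly from the definition $P_\infty=\bigoplus_{j\ge 0}A_\infty t^{-j}$.
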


We have a similar result for jet schemes. If $X_n$ denotes the $n$-th
\emph{jet scheme} of $X$, then we have a \emph{universal jet} $X_n
\xleftarrow{\rho_n} U_n \xrightarrow{\gamma_n} X$, and we construct a sheaf
$\mathcal P_n$ on $U_n$ whose $\mathcal O_{X_n}$-dual is $\mathcal
O_{U_n}$. In this case, it turns out that $\mathcal P_n$ is the $\mathcal
O_{X_n}$-dual of $\mathcal O_{U_n}$ and is trivial as an $\mathcal
O_{U_n}$-module, and the description of the sheaf of differentials gets
simplified.

\begin{introtheorem}[K\"ahler differentials on jet schemes]
\label{th:differentials-jet-scheme}
There are natural isomorphisms
\[
    \Omega_{X_n} 
    \simeq 
    \rho_{n*}(\gamma_n^*(\Omega_{X}) 
    \otimes
    \mathcal P_n)
    \simeq 
    \rho_{n*}(\gamma_n^*(\Omega_{X})).
\]
\end{introtheorem}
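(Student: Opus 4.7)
The plan is to prove the two isomorphisms in sequence. The first mirrors the argument for Theorem~\ref{th:differentials-arc-space}, combining the universal properties of K\"ahler differentials and jet schemes with a Hom-tensor adjunction for the finite flat morphism $\rho_n$ in which $\mathcal P_n$ appears as a dual object. The second is then a direct structural computation showing $\mathcal P_n$ is free of rank one over $\mathcal O_{U_n}$, so that its contribution to the tensor product is trivial.

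For the first isomorphism I would apply Yoneda: for each quasi-coherent $\mathcal O_{(X/S)_n}$-module $\mathcal M$, construct a natural bijection between $\Hom_{\mathcal O_{(X/S)_n}}(\Omega_{(X/S)_n/S},\mathcal M)$ and $\Hom_{\mathcal O_{(X/S)_n}}(\rho_{n*}(\gamma_n^*\Omega_{X/S}\otimes\mathcal P_n),\mathcal M)$. The universal property of K\"ahler differentials identifies the left-hand side with $S$-derivations $\mathcal O_{(X/S)_n}\to\mathcal M$, equivalently with retractions of the trivial extension $(X/S)_n[\mathcal M]\to(X/S)_n$. The moduli description of $(X/S)_n$ then matches these with extensions of the universal jet $\gamma_n\colon U_n\to X$ to morphisms $U_n[\rho_n^*\mathcal M]\to X$, exploiting the identity $(X/S)_n[\mathcal M]\times\Spec\mathbb Z[t]/(t^{n+1}) = U_n[\rho_n^*\mathcal M]$; the universal property of $\Omega_{X/S}$ then classifies those extensions by $\mathcal O_{U_n}$-linear maps $\gamma_n^*\Omega_{X/S}\to\rho_n^*\mathcal M$.

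The key technical step is to match $\Hom_{\mathcal O_{U_n}}(\gamma_n^*\Omega_{X/S},\rho_n^*\mathcal M)$ with $\Hom_{\mathcal O_{(X/S)_n}}(\rho_{n*}(\gamma_n^*\Omega_{X/S}\otimes\mathcal P_n),\mathcal M)$. Since $\rho_{n*}\mathcal O_{U_n}$ is locally free of rank $n+1$ over $\mathcal O_{(X/S)_n}$, biduality yields $\rho_n^*\mathcal M\simeq\Hom_{\mathcal O_{(X/S)_n}}(\mathcal P_n,\mathcal M)$ as $\mathcal O_{U_n}$-modules, and the Hom-tensor adjunction over $\mathcal O_{(X/S)_n}$ rewrites the first $\Hom$ as $\Hom_{\mathcal O_{(X/S)_n}}(\gamma_n^*\Omega_{X/S}\otimes_{\mathcal O_{U_n}}\mathcal P_n,\mathcal M)$. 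Viewing this last expression as a computation of $\mathcal O_{(X/S)_n}$-module homomorphisms is the tacit application of $\rho_{n*}$ to the first argument, completing the identification.

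For the second isomorphism I would verify directly that $\mathcal P_n$ is free of rank one over $\mathcal O_{U_n}$. The coefficient-of-$t^n$ functional $\phi_n\colon\mathcal O_{U_n}\to\mathcal O_{(X/S)_n}$ satisfies $t^i\cdot\phi_n = \phi_{n-i}$, so $\phi_0,\dots,\phi_n$ form the $\mathcal O_{(X/S)_n}$-basis of $\mathcal P_n$ dual to $1,t,\dots,t^n$, and $\phi_n$ has zero annihilator in $\mathcal O_{U_n}$. Consequently $\mathcal P_n\simeq\mathcal O_{U_n}$ via $\phi_n\leftrightarrow 1$, whence $\gamma_n^*\Omega_{X/S}\otimes\mathcal P_n\simeq\gamma_n^*\Omega_{X/S}$ on $U_n$ and the isomorphism persists under $\rho_{n*}$. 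I expect the principal obstacle to be the careful bookkeeping of $\mathcal O_{U_n}$-structures in the duality/adjunction step: in particular, verifying that the $\mathcal O_{U_n}$-action on $\Hom_{\mathcal O_{(X/S)_n}}(\mathcal P_n,\mathcal M)$ induced from the $\mathcal O_{U_n}$-action on $\mathcal P_n$ agrees with the natural one on $\rho_n^*\mathcal M$; once this is in place, the remainder is formal.
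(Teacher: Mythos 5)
Your proposal is correct and follows essentially the same route as the paper: the Yoneda argument via square-zero extensions and the universal property of the jet functor is the paper's \cref{th:derivations}, the identification $\rho_n^*\mathcal M\simeq \Hom_{\mathcal O_{(X/S)_n}}(\mathcal P_n,\mathcal M)$ with matching $\mathcal O_{U_n}$-structures is \cref{th:B_n-dual-of-P_n-local-module-M}, and your trivialization of $\mathcal P_n$ by the coefficient-of-$t^n$ functional is exactly the isomorphism $t^{-j}\mapsto t^{-j+n}$ of \cref{th:P_n=B_n}. The subtlety you flag (compatibility of the $\mathcal O_{U_n}$-action under the duality) is precisely the point the paper checks, so no gap remains.
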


These theorems provide an efficient way of computing 
the Jacobian matrices of the jet schemes and the arc space. 
We illustrate this in some details at the end of \cref{s:der+diff}.

We apply \cref{th:differentials-arc-space,th:differentials-jet-scheme} 
to study the structure of arc spaces.

The starting point is the description of the fibers of the sheaf of
differentials of the jet schemes $X_n$ at liftable jets. Suppose that $X$
is a scheme of finite type over $k$, and let $\alpha \in X_\infty$ be an
arc. For $n \gg 1$, let $\alpha_n \in X_n$ be the truncation of $\alpha$
and $L_n$ the residue field of $\alpha_n$. Using the above theorems, we
determine an isomorphism
\[
    \Omega_{X_n} \otimes L_n
    \;\simeq\;
    \big(L_n[t]/(t^{n+1})\big)^d
    \;\oplus\;
    \bigoplus_{i \geq d} L_n[t]/(t^{e_i}).
\]
Here the number $d$ and the sequence $\{e_i\}$ are certain Fitting-theoretic
invariants of the pull-back of $\Omega_{X}$ by $\alpha$ (see \cref{s:fitting}
for the precise definition of these invariants and \cref{th:fitting} for the
precise statement). If $X$ is a reduced and equidimensional scheme of finite
type over a field $k$, and $\alpha$ is not fully contained in the singular locus of $X$, then
we have $d = \dim X$ and $\sum_{i \ge d} e_i =
\ord_\alpha(\Jac_X)$ where $\Jac_X$ is the Jacobian ideal of $X$, and the above
isomorphism recovers in this case one of the main results of \cite{dFD14}.

These results have several applications. For simplicity, for the reminder of
the introduction we shall assume that $X$ is a variety defined over a perfect
field $k$, though more general results are obtained in the paper. 

A natural way of studying the structure of the arc space of $X$ is to analyze
its (not necessarily closed) points. Given a point $\alpha \in X_\infty$, we
are interested in two invariants: the \emph{embedding dimension}
\[
\embdim(\mathcal O_{X_\infty,\alpha})
\]
of the local ring at $\alpha$, and the \emph{jet codimension} of
$\alpha$ in $X_\infty$, which is defined by 
\[
	\jetcodim(\alpha,X_\infty)
	:=
        \lim_{n \to \infty}\big((n+1)\dim (X)
        - \dim (\overline{\{\alpha_n\}})\big).
\]
These and related invariants have been studied in the literature (e.g., see
\cite{ELM04,dFEI08,dFM15,Reg,MR}). Both numbers provide measures of the
``size'' of the point. Note however that while the embedding dimension is
computed on the arc space (with its scheme structure), the jet
codimension is computed from the truncations of the arc and only
depends on the reduced structure of the jet schemes. 

Our first application is that these
two invariants measure the same quantity. 

\begin{introtheorem}[Embedding dimension as jet codimension]
\label{th:embdim=cylcodim-finiteness-intro}
Given a variety $X$ over a perfect field, we have
\[
	\embdim(\mathcal O_{X_\infty,\alpha}) = \jetcodim(\alpha,X_\infty)
\]
for every $\alpha \in X_\infty$. 
\end{introtheorem}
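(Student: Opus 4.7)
The plan is to compute both quantities using Theorem~B at liftable jets, together with the Fitting-theoretic decomposition $\Omega_{X_n/k}\otimes L_n \simeq (L_n[t]/(t^{n+1}))^d \oplus \bigoplus_{i\ge d} L_n[t]/(t^{e_i})$ stated in the introduction, and show they agree. Since $k$ is perfect, each residue field $L_n := \kappa(\alpha_n)$ is separable over $k$, so the second fundamental sequence of differentials for the local $k$-algebra $\mathcal O_{X_n,\alpha_n}$ splits short exactly and yields
\[
\embdim(\mathcal O_{X_n,\alpha_n}) = \dim_{L_n}(\Omega_{X_n/k}\otimes L_n) - \trdeg_k L_n = \dim_{L_n}(\Omega_{X_n/k}\otimes L_n) - \dim\overline{\{\alpha_n\}}.
\]
Substituting the Fitting decomposition, for $n \gg 0$ this becomes
\[
\embdim(\mathcal O_{X_n,\alpha_n}) = \bigl((n+1)d - \dim\overline{\{\alpha_n\}}\bigr) + \textstyle\sum_{i\ge d} e_i,
\]
where $d$ and the $e_i$ are Fitting invariants of $\alpha^*\Omega_{X/k}$.

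First I would dispatch the case when $\alpha$ lies entirely in $\Sing X$: there $\sum e_i$ is infinite, and both $\embdim(\mathcal O_{X_\infty,\alpha})$ and $\jetcodim(\alpha,X_\infty)$ are $+\infty$ by elementary dimension estimates on the truncation maps. In the remaining case, $d = \dim X$ and $\sum e_i = \ord_\alpha\Jac_X < \infty$; the first bracket above increases monotonically to $\jetcodim(\alpha, X_\infty)$ as $n\to\infty$, so
\[
\lim_n \bigl(\embdim(\mathcal O_{X_n,\alpha_n}) - \textstyle\sum_i e_i\bigr) = \jetcodim(\alpha, X_\infty).
\]

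The main obstacle, and the heart of the argument, is to prove that this limit equals $\embdim(\mathcal O_{X_\infty,\alpha})$---that is, that passing from the jet schemes to the arc space cancels exactly the Fitting contribution $\sum e_i$ from $\embdim(\mathcal O_{X_n,\alpha_n})$. Since $\mathcal O_{X_\infty,\alpha} = \varinjlim \mathcal O_{X_n,\alpha_n}$, one has $\mathfrak m_\alpha/\mathfrak m_\alpha^2 \simeq \varinjlim \mathfrak m_{\alpha_n}/\mathfrak m_{\alpha_n}^2$, but the transition maps generally have nontrivial kernels. To control them, my plan is to invoke Theorem~A directly: setting $L = \kappa(\alpha)$, the stalk of $\Omega_{X_\infty/k}$ at $\alpha$ is described by $\rho_{\infty*}(\gamma_\infty^*\Omega_{X/k}\otimes\mathcal P_\infty)$, and the defining $\mathcal O_{X_\infty}$-duality $\mathcal P_\infty^\vee = \mathcal O_{U_\infty}$ should imply that the torsion summands $L[[t]]/(t^{e_i})$ of $\alpha^*\Omega_{X/k}$ pair to zero under $\rho_{\infty*}$, being absorbed when computing the stalk. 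Effectively, the torsion part---which at each finite level $n$ appears as new generators of $\mathfrak m_{\alpha_n}/\mathfrak m_{\alpha_n}^2$---gets identified with elements of $\mathfrak m_\alpha^2$ in the colimit. This should yield $\embdim(\mathcal O_{X_\infty,\alpha}) = \lim_n \bigl(\embdim(\mathcal O_{X_n,\alpha_n}) - \sum e_i\bigr)$, which combined with the previous step finishes the proof. The delicate technical points---the stalk computation of $\rho_{\infty*}$ and manipulation of the cotangent sequence for the non-Noetherian ring $\mathcal O_{X_\infty,\alpha}$---are what I expect to constitute the main difficulty.
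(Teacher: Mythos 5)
Your plan is essentially the paper's own argument. The identity $\embdim(\mathcal O_{X_n,\alpha_n}) = \dim_{L_n}(\Omega_{X_n/k}\otimes L_n) - \trdeg(L_n/k)$ is \cref{th:emb-dim-jets-gen}, and the ``main obstacle'' you single out---showing that the transition maps on conormal spaces kill exactly the torsion contribution $\sum_{i\ge d}e_i$ (plus the extra free summands coming from $d_n>d$), so that $\embdim(\mathcal O_{X_\infty,\alpha}) = \lim_n\big((n+1)d-\dim(\alpha_n)\big)$---is precisely the content of \cref{th:seq-diff,th:lambda,th:embdim-limit}. The mechanism you propose is the right one, though the paper implements it not through the duality $\mathcal P_\infty^\vee\simeq\mathcal O_{U_\infty}$ directly but through the companion fact from \cref{th:differentials} that the map $\Omega_{X_n/k}\otimes L\to\Omega_{X_m/k}\otimes L$ is multiplication by $t^{m-n}$; for $m-n$ larger than all finite invariant factors this visibly annihilates the torsion summands and nothing else, and a diagram chase with the conormal sequences then gives $\dim_L\Im\big(I_n/I_n^2\otimes L\to I_\infty/I_\infty^2\big)=(n+1)d-\dim(\alpha_n)$. (Equivalently: $P_L=L(\!(t)\!)/tL\llbracket t\rrbracket$ is divisible, so tensoring with it kills torsion, which is how the paper phrases the same point in the proof of the birational transformation rule.)

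One genuine misstatement to repair: in the case $\alpha\in(\Sing X)_\infty$ you assert that ``$\sum e_i$ is infinite.'' With $d=d(\alpha,\Omega_{X/k})$ the actual Betti number of $\alpha^*\Omega_{X/k}$, the sum $\sum_{i\ge d}e_i=\ord_\alpha(\Fitt^d(\Omega_{X/k}))$ is \emph{always finite}, by the very definition of $d$ (see the end of \cref{s:fitting}). What happens for an arc inside the singular locus of a reduced $X$ is instead that $d>\dim X$: the fiber of $\Omega_{X/k}$ at the generic point of $\alpha$ is too large. Then $\embdim(\mathcal O_{X_\infty,\alpha})=\lim_n\big((n+1)d-\dim(\alpha_n)\big)=\infty$ because the free rank overshoots, while $\jetcodim(\alpha,X_\infty)=\lim_n\big((n+1)\dim X-\dim(\alpha_n)\big)=\infty$ because $\dim(\alpha_n)\le(n+1)\dim(\Sing X)+O(1)$; this last bound is not an ``elementary dimension estimate on the truncation maps'' but requires applying the same monotone-limit lemma to the subscheme $\Sing X$ (this is how \cref{th:embdim=cylcodim} handles it). With that case argued correctly, the proposal goes through and coincides with the paper's proof.
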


One of the most important results on arc spaces is the Birational
Transformation Rule \cite{Kon95,DL99}. It implies the change-of-variable
formula in motivic integration \cite{Kon95,DL99,Bat99,Loo02} and has been
applied to study invariants of singularities in birational geometry (e.g., see
\cite{Mus01,Mus02,EMY03,EM04,ISW12,Ish13,IR13,dFD14,EI15,Zhu15}). Using our
description of the sheaf of differentials, we obtain the following variant.

\begin{introtheorem}[Birational transformation rule]
\label{th:BTR-intro}
Given a proper birational map $f \colon Y \to X$ between two varieties over a
perfect field, we have
\[
    \embdim\left(\mathcal O_{Y_\infty,\beta}\right)
    \leq
    \embdim\left(\mathcal O_{X_\infty,f_\infty(\beta)}\right)
    \leq
    \embdim\left(\mathcal O_{Y_\infty,\beta}\right)
    + \ord_{\beta}(\Jac_f)
\]
for every $\beta \in Y_\infty$, where $\Jac_f$ is the Jacobian of $f$. 
Moreover, if $Y$ is smooth at $\beta(0)$, then 
\[
    \embdim\left(\mathcal O_{X_\infty,f_\infty(\beta)}\right)
    =
    \embdim\left(\mathcal O_{Y_\infty,\beta}\right)
    + \ord_{\beta}(\Jac_f).
\] 
\end{introtheorem}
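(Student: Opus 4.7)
The plan is to combine \cref{th:embdim=cylcodim-finiteness-intro} with a direct computation of $\Omega_{Y_n/X_n}$ derived from \cref{th:differentials-jet-scheme}. Setting $\alpha = f_\infty(\beta)$ and $d = \dim X = \dim Y$, \cref{th:embdim=cylcodim-finiteness-intro} yields, for $n \gg 0$,
\[
    \embdim \mathcal O_{X_\infty,\alpha} - \embdim \mathcal O_{Y_\infty,\beta}
    = \trdeg\bigl(\kappa(\beta_n)/\kappa(\alpha_n)\bigr),
\]
using the inclusion $\kappa(\alpha_n) \subseteq \kappa(\beta_n)$ coming from $f_n(\beta_n) = \alpha_n$. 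The first inequality in the statement follows at once, and the theorem reduces to showing
\[
    \trdeg\bigl(\kappa(\beta_n)/\kappa(\alpha_n)\bigr) \;\le\; \ord_\beta(\Jac_f)
\]
for $n \gg 0$, with equality when $Y$ is smooth at $\beta(0)$.

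The key computation is $\dim_{\kappa(\beta_n)}\Omega_{Y_n/X_n}\otimes\kappa(\beta_n) = \ord_\beta(\Jac_f)$ for $n \gg 0$. This follows from the identification $\Omega_{Y_n/X_n} \simeq (\rho_n^Y)_*((\gamma_n^Y)^*\Omega_{Y/X})$, obtained by applying \cref{th:differentials-jet-scheme} to both $X$ and $Y$ and using base change in the Cartesian square formed by $U_n^Y$, $U_n^X$, $Y_n$ and $X_n$. Taking the fiber at $\beta_n$ identifies $\Omega_{Y_n/X_n}\otimes\kappa(\beta_n)$ with $\beta^*\Omega_{Y/X}$, a torsion module over $\kappa(\beta)\llbracket t\rrbracket$ of length $\ord_\beta(\Jac_f)$, since the $0$th Fitting ideal of $\Omega_{Y/X}$ is $\Jac_f$ and truncation preserves the length for $n$ sufficiently large. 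The right-exact Jacobi--Zariski sequence then produces a surjection
\[
    \Omega_{Y_n/X_n}\otimes\kappa(\beta_n) \twoheadrightarrow \Omega_{\kappa(\beta_n)/\kappa(\alpha_n)},
\]
so $\dim\Omega_{\kappa(\beta_n)/\kappa(\alpha_n)} \le \ord_\beta(\Jac_f)$; combined with the general inequality $\dim\Omega_{L/K} \ge \trdeg(L/K)$ for finitely generated field extensions, this gives the upper bound.

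When $Y$ is smooth at $\beta(0)$ the jet scheme $Y_n$ is smooth at $\beta_n$, and one must show that the resulting local structure forces (i) the fiber ring $\mathcal O_{Y_n,\beta_n}/\mathfrak m_{\alpha_n}\mathcal O_{Y_n,\beta_n}$ to have zero embedding dimension, so that the surjection above becomes an isomorphism, and (ii) the extension $\kappa(\beta_n)/\kappa(\alpha_n)$ to be separable, so that $\dim\Omega_{\kappa(\beta_n)/\kappa(\alpha_n)} = \trdeg(\kappa(\beta_n)/\kappa(\alpha_n))$. Together these upgrade the inequality to the claimed equality. The main obstacle is carrying out this equality case carefully, since both (i) and (ii) must be extracted from the single smoothness hypothesis on $Y$ at $\beta(0)$; the remaining steps consist of applications of \cref{th:differentials-jet-scheme} and standard commutative algebra.
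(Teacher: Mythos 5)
Your route is genuinely different from the paper's. The paper proves \cref{th:BTR,th:BTR-gen} entirely at the level of the arc space: by \cref{th:same-residue} the residue fields of $\alpha=f_\infty(\beta)$ and $\beta$ coincide (when $\beta$ is not in the exceptional locus), so one can compare $I/I^2$ and $J/J^2$ directly via the map $\varphi$ on $\Omega_{X_\infty/k}\otimes L\to\Omega_{Y_\infty/k}\otimes L$, which \cref{th:differentials} identifies with $\psi\otimes_{B_L}P_L$; the divisibility of $P_L$ then gives $\coker\varphi=0$ and $\ker\varphi=\operatorname{Tor}_1^{B_L}(Q_{Y/X},P_L)=Q_{Y/X}$, of length $\ord_\beta(\Jac_f)$ when $Y$ is smooth at $\beta(0)$. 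You instead descend to finite level via \cref{th:embdim=cylcodim-finiteness-intro} and convert the difference of embedding dimensions into $\trdeg(\kappa(\beta_n)/\kappa(\alpha_n))$. For the two inequalities this works and is arguably more elementary: the identification $\Omega_{Y_n/X_n}\simeq(\rho^Y_n)_*((\gamma^Y_n)^*\Omega_{Y/X})$ is a legitimate consequence of \cref{th:differentials-jet-scheme} plus flat base change along the finite flat map $\rho_n$, and the chain $\trdeg\le\dim\Omega_{\kappa(\beta_n)/\kappa(\alpha_n)}\le\dim(\Omega_{Y_n/X_n}\otimes\kappa(\beta_n))=\ord_\beta(\Jac_f)$ is sound (with the degenerate case $\beta\subset V(\Jac_f)$, where everything is infinite by \cref{th:embdim=cylcodim}, handled separately).

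The equality case, however, contains a genuine gap, and it is precisely where the theorem's content lies. Your claim (i) asks that the kernel of $\Omega_{Y_n/X_n}\otimes\kappa(\beta_n)\to\Omega_{\kappa(\beta_n)/\kappa(\alpha_n)}$ vanish, i.e.\ that $\mathfrak m_{\beta_n}=\mathfrak m_{\alpha_n}\mathcal O_{Y_n,\beta_n}+\mathfrak m_{\beta_n}^2$: the scheme-theoretic fiber of $f_n$ through $\beta_n$ must have zero embedding dimension at $\beta_n$. This does not follow formally from smoothness of $Y_n$ at $\beta_n$; it is the finite-level avatar of the surjectivity of $I/I^2\to J/J^2$, which in the paper is extracted from the divisibility of $P_\infty$ (equivalently, from the $\operatorname{Tor}_1$ computation), and in Denef--Loeser from the fibration structure of the truncation maps. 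A dimension count shows only that $\mathfrak m_{\alpha_n}/\mathfrak m_{\alpha_n}^2$ is large enough, not that it surjects. Your claim (ii) is also not free: in positive characteristic a finitely generated extension $\kappa(\beta_n)/\kappa(\alpha_n)$ need not be separably generated, and if it is not, then $\dim\Omega_{\kappa(\beta_n)/\kappa(\alpha_n)}>\trdeg$ and your chain of inequalities would yield a strict inequality where equality is claimed. Note that the paper's approach sidesteps (ii) entirely because at $n=\infty$ the two residue fields are equal. Until (i) and (ii) are actually derived from the hypothesis that $Y$ is smooth at $\beta(0)$ --- and I do not see how to do so without reintroducing either the $\operatorname{Tor}_1$/divisibility mechanism or the Denef--Loeser fibration lemma --- the second, and harder, half of the statement is not proved.
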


The connection between this theorem and Denef--Loeser's Birational
Transformation Rule becomes evident once one rewrites the second formula of
\cref{th:BTR-intro} using \cref{th:embdim=cylcodim-finiteness-intro}, which
gives the formula
\[
    \jetcodim(f_\infty(\beta),X_\infty)
    =
    \jetcodim(\beta,Y_\infty)
    + \ord_{\beta}(\Jac_f)
\] 
for any resolution of singularities $f \colon Y \to X$. Although it does not
retain all the information provided in \cite{DL99} that is necessary for the
change-of-variable formula in motivic integration, this formula suffices for
all known applications to the study of singularities in birational geometry. 

Our next application regards the \emph{stable points} of the arc space. 
These are the generic points of the irreducible constructible
subsets of $X_\infty$ that are not contained in $(\Sing X)_\infty$
(see \cref{s:stable-points}). Stable points and their local rings
have been extensively studied in \cite{Reg06,Reg09,Reg,MR}. The following
theorem can be viewed as providing a characterization of stable points. 

\begin{introtheorem}[Characterization of finite embedding dimension]
\label{th:char-stable-points-intro}
Given a variety $X$ over a perfect field and a point $\alpha \in X_\infty$, we have
\[
	\embdim(\mathcal O_{X_\infty,\alpha}) < \infty
\]
if and only if $\alpha$ is a stable point.
\end{introtheorem}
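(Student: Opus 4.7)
I would derive the characterization by combining \cref{th:embdim=cylcodim-finiteness-intro} — which identifies the scheme-theoretic $\embdim$ with the topological invariant $\jetcodim$ — with a careful analysis of the structure of $X_\infty$ via Denef--Loeser-type truncation results. In effect the content reduces to showing that $\jetcodim(\alpha,X_\infty)<\infty$ if and only if $\alpha$ is stable and not contained in $(\Sing X)_\infty$.

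\textbf{Necessity.} Assume $\embdim(\mathcal O_{X_\infty,\alpha})<\infty$, hence $\jetcodim(\alpha,X_\infty)<\infty$ by \cref{th:embdim=cylcodim-finiteness-intro}. To rule out $\alpha\in(\Sing X)_\infty$, I would argue that if the arc factored entirely through the proper closed subscheme $\Sing X$, then the generic rank of $\alpha^*\Omega_{X/k}$ would strictly exceed $\dim X$, so the Fitting-theoretic invariant $d$ in the description of $\Omega_{X_n/k}\otimes L_n$ afforded by \cref{th:differentials-jet-scheme} is strictly larger than $\dim X$. Translating through the formula $\dim\overline{\{\alpha_n\}}=\trdeg_k L_n$, this forces $(n+1)\dim X-\dim\overline{\{\alpha_n\}}$ to grow at least linearly in $n$, contradicting finite jet codimension. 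Hence $e:=\ord_\alpha(\Jac_X)<\infty$. I would then invoke the Denef--Loeser theorem that over the constructible locus $\{\ord\Jac_X\le e\}\subseteq X_\infty$, the truncation morphisms $\pi_{m,n}\colon X_m\to X_n$ are piecewise trivial fibrations of computable relative dimension for $m\ge n\ge 2e$. Finiteness of $\jetcodim$ then forces the truncated images $\overline{\{\alpha_n\}}$ to stabilize along this fibration, displaying $\alpha$ as the generic point of an irreducible constructible subset of $X_\infty$, i.e., a stable point.

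\textbf{Sufficiency.} Conversely, assume $\alpha$ is stable and $\alpha\notin(\Sing X)_\infty$. Then $\alpha$ is the generic point of some irreducible constructible subset $C\subseteq X_\infty$ meeting $\{\ord\Jac_X\le e\}$ for some finite $e$. The same Denef--Loeser structural result identifies $C$, up to the Jacobian stratification, with a piece that maps onto its truncated image $\overline{\{\alpha_n\}}\subseteq X_n$ by a locally trivial affine fibration of constant relative dimension for $n\gg e$. Consequently $(n+1)\dim X-\dim\overline{\{\alpha_n\}}$ is eventually constant in $n$, so $\jetcodim(\alpha,X_\infty)<\infty$, and another application of \cref{th:embdim=cylcodim-finiteness-intro} yields $\embdim(\mathcal O_{X_\infty,\alpha})<\infty$.

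\textbf{Main obstacle.} The critical technical hurdle is the handling of constructibility in the non-Noetherian space $X_\infty$: a general constructible subset is a finite boolean combination of cylinders sliced by the Jacobian order, and one must confirm that finite jet codimension forces $\alpha$ into a single Jacobian stratum with predictable affine-bundle structure. This matching of the purely dimensional $\jetcodim$ invariant with the constructibility-based notion of a stable point, uniformly as the truncation level $n$ varies, is where the differential-geometric machinery of \cref{th:differentials-jet-scheme} and its Fitting-theoretic consequence from \cref{th:embdim=cylcodim-finiteness-intro} enter most decisively.
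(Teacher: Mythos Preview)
Your approach mirrors the paper's: reduce via \cref{th:embdim=cylcodim-finiteness-intro} to showing that $\jetcodim(\alpha,X_\infty) < \infty$ is equivalent to ``stable and not in $(\Sing X)_\infty$'', then invoke the Denef--Loeser fibration lemma (\cref{th:DL}, packaged in the paper as \cref{th:stable-points}) for the equivalence with stability once $\ord_\alpha(\Jac_X)<\infty$.

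One step needs repair. In ruling out $\alpha \in (\Sing X)_\infty$ you claim that $d > \dim X$ forces $(n+1)\dim X - \dim(\alpha_n)$ to grow linearly, but the Betti number $d$ governs $\embdim = \lim\big((n+1)d - \dim(\alpha_n)\big)$ via \cref{th:embdim-limit}, not $\jetcodim$. For a variety the correct argument is that $\dim(\Sing X) < \dim X$ bounds $\dim(\alpha_n)$ and hence forces $\jetcodim = \infty$ (this is \cref{th:embdim=cylcodim}(3)); for a non-reduced scheme, where $\Sing X$ need not be a proper subscheme (e.g.\ $X = \Spec k[x]/(x^2)$, where $\jetcodim = 0$ but $\embdim = \infty$), one must instead show $\embdim = \infty$ directly from $d > \dim X$, bypassing $\jetcodim$ altogether. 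This is also why \cref{th:embdim=cylcodim-finiteness-intro}, stated only for varieties, does not literally apply at the outset for arbitrary schemes of finite type: the paper's order is to first establish $\alpha \notin (\Sing X)_\infty$ (which forces $X$ to be reduced near $\alpha(\eta)$), then reduce to an irreducible component via \cref{th:BTR-inclusion}, and only then identify $\embdim$ with $\jetcodim$.
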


It follows from general properties of local rings
that the embedding dimension at a point $\alpha
\in X_\infty$ is finite if and only if 
the completion of the local ring is Noetherian. 
The fact that the completion of the local
ring at a stable point $\alpha \in X_\infty$ 
is Noetherian is a theorem of Reguera \cite{Reg06,Reg09}, 
and we get a new proof of this important result.
It is the key ingredient in the proof of the Curve Selection Lemma, which plays an
essential role in the recent progress on the Nash problem (e.g., see
\cite{FdBPP12,dFD16}).

There are examples in positive characteristics of varieties $X$ whose arc space
$X_\infty$ has irreducible components that are fully contained in $(\Sing
X)_\infty$, and \cref{th:char-stable-points-intro} implies that $X_\infty$ has
infinite embedding codimension at the generic points of such components (see
\cref{th:min-primes-inf-embdim}). One should contrast this with the main theorem
of \cite{GK00,Dri}, which can be interpreted as 
saying that the completion of the local ring of $X_\infty$ at any $k$-valued
point that is not contained in $(\Sing X)_\infty$ has finite embedding codimension. 

A special class of stable points is given by what we call the \emph{maximal
divisorial arcs}. By definition, these are the arcs $\alpha \in X_\infty$ whose
associated valuation $\ord_\alpha$ is a divisorial valuation and that are
maximal (with respect to specialization) among all arcs defining the same
divisorial valuation. Equivalently, they are the generic points of the maximal
divisorial sets defined in \cite{ELM04,dFEI08,Ish08}. 
For example, if $E$ is a prime divisor on a resolution of singularities $f \colon Y \to X$, 
and $C \subset Y_\infty$ is the set of arcs on $Y$ with positive order of contact
along $E$, then the closure of $f_\infty(C)$ in $X_\infty$ is the maximal
divisorial set associated to the valuation $\ord_E$ and
its generic point is the maximal divisorial arc corresponding to this valuation. 
Our final application gives the following result. 

\begin{introtheorem}[Embedding dimension at maximal divisorial arcs]
\label{th:max-div-arc-embdim-intro}
Let $X$ be a variety over a perfect field, 
$f \colon Y \to X$ a proper birational morphism from a normal variety $Y$,
$E$ a prime divisor on $Y$, and $q$ a positive integer. 
If $\alpha \in X_\infty$ is the maximal divisorial arc 
corresponding to the divisorial valuation $q\ord_E$, then
\[
	\embdim(\mathcal O_{X_\infty,\alpha}) 
	= q(\ord_E(\Jac_f) + 1).
\]
\end{introtheorem}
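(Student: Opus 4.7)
The plan is to compute the embedding dimension at the maximal divisorial arc on $Y$ for $q\,\ord_E$ directly using \cref{th:embdim=cylcodim-finiteness-intro}, and then transport the answer to $X$ via the birational transformation rule \cref{th:BTR-intro}. Since $Y$ is normal it is regular in codimension one, hence smooth at the generic point $\eta_E$ of $E$, and $E$ itself is smooth at $\eta_E$ as it is integral. Let $\beta \in Y_\infty$ be the maximal divisorial arc for $q\,\ord_E$ on $Y$; its base point is $\beta(0) = \eta_E$, and the standard compatibility of maximal divisorial sets with proper birational morphisms gives $f_\infty(\beta) = \alpha$. So it suffices to compute $\embdim(\mathcal O_{Y_\infty, \beta})$.

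First I would show that $\jetcodim(\beta, Y_\infty) = q$. Work in an affine open $W \subset Y^{\mathrm{sm}}$ on which $E \cap W$ is smooth and cut out by a single equation $z$. In $W_\infty$, the locus where an arc $\beta'$ satisfies $\ord_{\beta'}(z) \ge q$ is defined by the $q$ independent linear conditions saying the first $q$ Taylor coefficients of $z(\beta'(t))$ vanish, so it has codimension $q$ and $\beta$ is its generic point. Since the local ring of $Y_\infty$ at $\beta$ coincides with that of $W_\infty$ at $\beta$, we deduce $\jetcodim(\beta, Y_\infty) = q$, and hence $\embdim(\mathcal O_{Y_\infty, \beta}) = q$ by \cref{th:embdim=cylcodim-finiteness-intro}.

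Since $Y$ is smooth at $\beta(0) = \eta_E$, the equality case of \cref{th:BTR-intro} yields
\[
    \embdim(\mathcal O_{X_\infty, \alpha})
    = \embdim(\mathcal O_{Y_\infty, \beta}) + \ord_\beta(\Jac_f)
    = q + q\,\ord_E(\Jac_f),
\]
where the identity $\ord_\beta(\Jac_f) = q\,\ord_E(\Jac_f)$ holds because the valuation associated to $\beta$ is by construction $q\,\ord_E$. This is the desired formula. The main obstacle is the identification $f_\infty(\beta) = \alpha$, together with the corresponding statement that the generic point of the maximal divisorial set on $X$ for $q\,\ord_E$ is the image of the one on $Y$; the remaining steps are a local codimension count on a smooth model and a direct application of the two main theorems of the introduction.
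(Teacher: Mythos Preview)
Your proposal is correct and follows essentially the same route as the paper: identify $\alpha$ as $f_\infty(\beta)$ where $\beta$ is the generic point of $\Cont^{\ge q}(E^\circ,Y)$ (the paper's \cref{th:max-div-arc}), compute $\embdim(\mathcal O_{Y_\infty,\beta})=q$ using smoothness of $Y$ near $\eta_E$, and then apply the equality case of the birational transformation rule together with $\ord_\beta(\Jac_f)=q\,\ord_E(\Jac_f)$. The only cosmetic difference is that the paper states $\embdim(\mathcal O_{Y_\infty,\beta})=q$ as a direct computation on a smooth scheme, while you route it through $\jetcodim(\beta,Y_\infty)=q$ and \cref{th:embdim=cylcodim-finiteness-intro}; both amount to the same codimension count for $\Cont^{\ge q}(E)$ in the arc space of a smooth variety.
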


The quantity $\ord_E(\Jac_f)$ is also known as the \emph{Mather discrepancy} of $E$ over $X$,
and denoted by $\widehat k_E(X)$ \cite{dFEI08}. 
In view of \cref{th:embdim=cylcodim-finiteness-intro},
\cref{th:max-div-arc-embdim-intro} recovers \cite[Theorem~3.8]{dFEI08}. 
The theorem is also closely related to
a recent result of Mourtada and Reguera \cite{Reg,MR}
which states that with the same assumptions as in \cref{th:max-div-arc-embdim-intro}, 
if the field has characteristic zero then 
\[
	\embdim(\widehat{\mathcal O_{X_\infty,\alpha}}) 
	= \embdim(\mathcal O_{(X_\infty)_\red,\alpha}) 
	= q(\ord_E(\Jac_f) + 1).
\]
Here, $\widehat{\mathcal O_{X_\infty,\alpha}}$ is the completion of 
$\mathcal O_{X_\infty,\alpha}$ with respect to the $I$-adic topology where
$I \subset \mathcal O_{X_\infty,\alpha}$ is the maximal ideal. 
It is regarded with the inverse limit topology, which in general
differs from the $\widehat I$-topology (a system of neighborhood of $0$
is given by the closures of the powers of $\widehat I$, which can be
strictly larger than the powers themselves).
It can be shown using results from \cite{Reg09} that, in characteristic zero,
\cref{th:max-div-arc-embdim-intro} also follows from the above
theorem of Mourtada and Reguera (see \cref{r:MR-Reg09}).

In a different direction, the isomorphisms given in \cref{th:differentials-jet-scheme}
can be used to study the relationship between the Nash blow-up
of a variety and the Nash blow-up of its jet schemes.
This study has been be carried out in the forthcoming paper \cite{dFD}.

Proofs of the results stated in the introduction are located as follows: both
statements in \cref{th:differentials-arc-space,th:differentials-jet-scheme} are
contained in \cref{th:differentials},
\cref{th:embdim=cylcodim-finiteness-intro} is proved in 
\cref{th:embdim=cylcodim}, \cref{th:BTR-intro} combines the
statements of \cref{th:BTR,th:BTR-gen}, \cref{th:char-stable-points-intro}
is proved in \cref{th:char-finite-cylcodim}, and
\cref{th:max-div-arc-embdim-intro} follows from \cref{th:max-div-arc-embdim}.

It is worthwhile mentioning that 
most proofs in this paper rely only on the definition of arc space and basic
facts in commutative algebra.

\subsection*{Acknowledgments}

The problem of understanding the tangent sheaf of the arc space
was proposed by Lawrence Ein to the second author when he was his graduate student, 
for which we are very grateful. 
We would like to thank Hussein Mourtada and Ana Reguera for sending us a copy
of their preprint \cite{MR} before it was available online.
The main result of their paper served as an inspiration which led us 
to the statement of \cref{th:embdim=cylcodim-finiteness-intro}, 
though through a very different path.
We thank Ana Reguera for useful comments.
Finally, we would like to thank the referees
for their careful reading of the paper and many
valuable comments and suggestions which have helped 
us improve the paper. 
In particular, we are grateful to one of the referees for 
pointing out an error in a previous version of \cref{th:lambda},
bringing to our attention the property stated in \cref{th:stable-separable-extensions}, 
and suggesting \cref{r:MR-Reg09}.

\section{Conventions}
\label{s:conventions}

Throughout the paper, all rings are assumed to be commutative with identity.
Unless otherwise specified, rings are regarded with the discrete topology;
however, power series rings of the form $R\llbracket t \rrbracket$ are
considered as complete topological rings. For topological modules $M$ and $N$
over a topological ring $R$, we define their completed tensor product, denoted
by $M \operatorname{\hat\otimes_R} N$, as the completion of the ordinary tensor
product $M \otimes_R N$. We will mostly encounter completed tensor products of
the form $M \operatorname{\hat\otimes_R} A\llbracket t \rrbracket$, where $R$
is a ring, $A$ is an $R$-algebra, and $M$ is an $R$-module, all with the
discrete topology.

We fix a base scheme $S$ and work on the category of schemes over $S$. Given an
object $X$ in this category, we do not impose any condition on the morphism $X
\to S$. However, starting with \cref{s:embdim} we will assume that $S = \Spec
k$ where $k$ is a perfect field, and mostly focus on schemes of finite type
over $k$.

We also need to consider formal schemes over $S$. For our purposes, it will be
enough to consider the notion of formal scheme introduced in \cite{EGAi}. In
fact, we will only consider formal schemes of the form $X
\operatorname{\hat\times_{\mathbb Z}}\Spf \mathbb Z\llbracket t\rrbracket$
where $X$ is an ordinary scheme. Here and in the sequel we use the symbol
$\hat\times$ to denote the product in the category of formal schemes; this
emphasizes the fact that it corresponds to the completed tensor product
$\hat\otimes$ at the level of topological rings. In particular,
\[
    \Spec R 
    \operatorname{\hat\times_{\mathbb Z}}
    \Spf \mathbb Z\llbracket t\rrbracket
    =
    \Spf (R \operatorname{\hat\otimes_{\mathbb Z}} \mathbb Z\llbracket t\rrbracket)
    =
    \Spf R\llbracket t\rrbracket.
\]

Unless otherwise stated, the letters $m$ and $n$ will be used to denote
elements in the set $\mathbb N \cup \{\infty\}$ where $\mathbb N$ denotes the set
of nonnegative integers.

\section{Generalities on arcs and jets}

\label{s:arcs-jets}

Let $X$ be an arbitrary scheme over a base scheme $S$. The jet schemes and the
arc space of $X$ over $S$ are defined, in this generality, in \cite{Voj07}, to
which we refer for more details and proofs; see also \cite{IK03,EM09}. 

For every non-negative integer $n$, the \emph{$n$-jet scheme} $(X/S)_n$ of $X$
over $S$ represents the functor from $S$-schemes to sets given by 
\[
    Z 
    \mapsto 
    \Hom_S(Z\times_\mathbb Z \Spec \mathbb Z[t]/(t^{n+1}),X),
\]
while the \emph{arc space} $(X/S)_\infty$ of $X$ over $S$ represents the
functor from $S$-schemes to sets given by 
\[
    Z
    \mapsto
    \Hom_S(
        Z 
        \operatorname{\hat\times_\mathbb Z} 
        \Spf \mathbb Z\llbracket t\rrbracket,
        X
    ).
\]
A point of $(X/S)_n$ is called an \emph{$n$-jet} of $X$ (over $S$), and a point of
$(X/S)_\infty$ an \emph{arc} of $X$ (over $S$). 

Note that an arc $\alpha \in (X/S)_\infty$
can be equivalently thought as a map $\Spf L\llbracket t \rrbracket \to X$ or a
map $\Spec L\llbracket t \rrbracket \to X$ where $L$
is the residue field of $\alpha$
and the composition of the map with the structure map $X \to S$
factors through $\Spec L$, see \cref{th:affine-univ-prop} below. 
In particular, if $S = \Spec R$ then we can view $\alpha$ as an arc with coefficients in $R$
via the map $R \to L$. 
The advantage of considering $\alpha$ as a map $\Spec L\llbracket t \rrbracket \to
X$ is that it allows us to talk about the \emph{generic point of the arc}, by
which we mean the image $\alpha(\eta) \in X$ of the generic point $\eta$ of
$\Spec L\llbracket t \rrbracket$. We will denote by $\alpha(0)$ the image of
the closed point of $\Spec L\llbracket t \rrbracket$.

If $S = \Spec R$ where $R$ is a ring, then one can replace $\mathbb Z$ with $R$
in the above formulas. If $S = \Spec k$ where $k$ is a field, then we will
simply denote $(X/S)_n$ by $X_n$ and $(X/S)_\infty$ by $X_\infty$. 

For any $n \in \mathbb N \cup \{\infty\}$, the scheme $(X/S)_n$ is equipped with
a universal family
\begin{equation}
\label{eq:univ-jet}
    \xymatrix{
        U_n \ar[r]^{\gamma_n} \ar[d]_{\rho_n} &
        X \ar[d] \\
        (X/S)_n \ar[r] & S
    }
\end{equation}
For $n$ finite, the family is given by 
\[
    U_n
    =
    (X/S)_n \times_\mathbb Z \Spec \mathbb Z[t]/(t^{n+1}), 
\]
and is called the \emph{universal $n$-jet}. For $n = \infty$, it is given by
\[ 
    U_\infty
    =
    (X/S)_\infty \hat\times_\mathbb Z \Spf \mathbb Z\llbracket t\rrbracket
\]
and is called the \emph{universal arc}. Notice that $U_\infty$ is a formal
scheme.
The completed fiber 
of $\rho_\infty \colon U_\infty \to (X/S)_\infty$
over a point $\alpha \in (X/S)_\infty(L)$ is $\Spf L\llbracket t \rrbracket$, and
the restriction of $\gamma_\infty$ to it agrees
with the induced map $\alpha \colon \Spf L\llbracket t \rrbracket \to X$. 
While the definition of $U_\infty$ may be hard to grasp at first sight, 
we will see in a moment that in the affine case its structure can be
described very explicitly.

We will use the following notations for the natural truncation maps:
\[
    \pi_n \colon (X/S)_\infty \to (X/S)_n,
    \quad
    \pi_{m,n} \colon (X/S)_m \to (X/S)_n,
    \quad
    \psi_n \colon (X/S)_n \to X.
\]
Note that $\gamma_n$ is different from the composition map $\psi_n \circ \rho_n$. 
Furthermore, observe that there is no natural map
between $U_m$ and $U_n$ when $m > n$. The natural map is
\[
    \mu_{m,n} \colon U_n \times_{(X/S)_n} (X/S)_m \longrightarrow U_m,
\]
where the fiber product is taken with respect to the maps $\rho_n$
and $\pi_{m,n}$.

It will be useful to have notation in place for the affine case. The following
basic properties are proved in \cite[Corollary~1.8 and Theorem~4.5]{Voj07}.

\begin{lemma}
\label{th:affine-univ-prop}
Assume that $S = \Spec R$ for a ring $R$ and $X = \Spec A$ for an $R$-algebra
$A$. Then $(X/S)_n$ is affine for all $n \in \mathbb N \cup \{\infty\}$. 
If we write $(X/S)_n = \Spec A_n$,
then for every $R$-algebra $C$ we have
\[
    (X/S)_n(C) 
    =
    \Hom_{\text{$R$-alg}}(A_n, C)
    = 
    \Hom_{\text{$R$-alg}}(A,C[t]/(t^{n+1})), 
\]
if $n$ is finite, and
\[
    (X/S)_\infty(C) 
    =
    \Hom_{\text{$R$-alg}}(A_\infty, C)
    = 
    \Hom_{\text{$R$-alg}}(A,C\llbracket t \rrbracket), 
\]
Moreover, $A_n$ is characterized by the above property.
\end{lemma}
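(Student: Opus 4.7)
The plan is to construct the representing ring $A_n$ explicitly from a presentation of $A$, and then verify the stated universal property by direct substitution.

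I would choose a (possibly infinite) presentation $A = R[x_i : i \in I]/(f_\lambda : \lambda \in \Lambda)$ as an $R$-algebra. For each $n \in \{0,1,2,\dots,\infty\}$ I would introduce fresh variables $x_i^{(j)}$ for $0 \le j \le n$ and define polynomials $F_\lambda^{(j)} \in R\big[x_i^{(k)}\big]$ by the coefficient-wise expansion
\[
    f_\lambda\!\Big(\sum_j x_i^{(j)}\, t^j\Big) \;=\; \sum_j F_\lambda^{(j)}\, t^j,
\]
and then set
\[
    A_n \;:=\; R\big[x_i^{(j)} : i \in I,\; 0 \le j \le n\big] \Big/ \big(F_\lambda^{(j)} : \lambda \in \Lambda,\; 0 \le j \le n\big).
\]

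Next I would verify the universal property by a direct bookkeeping argument. Giving an $R$-algebra homomorphism $A_n \to C$ is the same as choosing elements $c_{ij} \in C$ that kill every $F_\lambda^{(j)}$. On the other hand, giving an $R$-algebra homomorphism $\phi \colon A \to C[t]/(t^{n+1})$ (or $\phi \colon A \to C\llbracket t\rrbracket$ when $n = \infty$) is the same as prescribing the images $\phi(x_i) = \sum_j c_{ij}\, t^j$ and imposing $\phi(f_\lambda) = 0$; expanding the latter coefficient by coefficient gives precisely the vanishing of the $F_\lambda^{(j)}(c_{ij})$. Both identifications are natural in $C$, so I obtain the claimed natural bijection.

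To conclude that $(X/S)_n$ is affine and equals $\Spec A_n$, I would observe that both schemes represent the same functor on the category of affine $S$-schemes. Since any scheme is determined by its restriction to the affine Yoneda site, this forces a canonical $S$-scheme isomorphism $(X/S)_n \cong \Spec A_n$. The final characterization statement is then the usual uniqueness up to unique isomorphism of representing objects, and it also implies that $A_n$ is independent of the chosen presentation of $A$.

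The main subtlety, and essentially the only nontrivial point, is the case $n = \infty$: one must identify $R$-algebra homomorphisms $A \to C\llbracket t\rrbracket$ with morphisms of formal $S$-schemes $\Spf C\llbracket t\rrbracket \to X$. Since $A$ carries the discrete topology, any such ring homomorphism is trivially continuous, and since $\Spf C\llbracket t\rrbracket$ has the same underlying topological space as $\Spec C$, the same bookkeeping argument as in the finite case goes through verbatim.
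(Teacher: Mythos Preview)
Your argument is correct. The paper does not actually prove this lemma: it cites \cite[Corollary~1.8 and Theorem~4.5]{Voj07}, and immediately afterward recalls that the representing algebra is Vojta's algebra $\HS^n_{A/R}$ of Hasse--Schmidt derivations. Your presentation-based construction is the explicit incarnation of that same object: the universal Hasse--Schmidt derivation $(D_p)$ is exactly the operator extracting the $p$-th coefficient in your expansion $f \mapsto f\big(\sum_j x_i^{(j)} t^j\big)$, and your relations $F_\lambda^{(j)} = 0$ are the defining relations of $\HS^n_{A/R}$. The Hasse--Schmidt packaging is coordinate-free and makes functoriality in $A$ transparent; your version is more hands-on and makes the bijection with $\Hom_R(A, C[t]/(t^{n+1}))$ visible by direct substitution. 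One small expository suggestion for the $n=\infty$ case: rather than invoking continuity, it is cleaner to say that a morphism $\Spf C\llbracket t\rrbracket \to X$ is by definition a compatible system of morphisms $\Spec C[t]/(t^{n+1}) \to X$, which for affine $X$ is an inverse system of maps $A \to C[t]/(t^{n+1})$, i.e., a single map $A \to C\llbracket t\rrbracket$.
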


More explicitly, jet schemes and arc spaces can be defined using Hasse--Schmidt
derivations.
With the notation of \cref{th:affine-univ-prop}, 
the $A$-algebra $A_n$ can be constructed as the 
algebra of Hasse--Schmidt differentials $\HS^n_{A/R}$,
see \cite[Definition~1.3 and Theorem~4.5]{Voj07}.
It comes equipped with the universal Hasse--Schmidt derivation, 
which is a sequence $(D_0,D_1,\dots,D_n)$ 
(by which we mean $(D_0,D_1,\dots)$ if $n = \infty$)
where $D_0 \colon A \to \HS^n_{A/R}$ is the natural inclusion
and $D_i \colon A \to \HS^n_{A/R}$, for $i \ge 1$,
are group homomorphisms such that $D_i(r) = 0$ for $r \in R$
and 
\[
D_i(xy) = \sum_{j+k=i} D_j(x)D_k(y)
\]
for all $x,y \in A$.

\begin{remark}
The definition of Hasse--Schmidt derivations is tailored to work 
in arbitrary characteristic. In characteristic zero
Hasse--Schmidt derivations can be computed from usual derivations
(see \cite[Section~1, Example]{Voj07}) but this is no longer true 
if the characteristic is positive.
\end{remark}

Defining $B_n := A_n[t]/(t^{n+1})$ when $n$ is
finite and $B_\infty := A_\infty\llbracket t\rrbracket$ when $n = \infty$, we
have
\[
    (X/S)_n = \Spec A_n
    \quad 
    \text{and} 
    \quad
    U_n = \Spf B_n.
\]
The universal jet (or arc) is given by
\begin{equation}
\label{eq:univ-jet-rings}
    \xymatrix{
        B_n &
        A \ar[l]_-{\gamma_n^{\sharp}} \\
        A_n \ar[u]^{\rho_n^\sharp} & R \ar[u]\ar[l]
    }
\end{equation}
where the map $\rho_n^\sharp$ is the natural inclusion $A_n \subset B_n$, and
the map $\gamma_n^\sharp$ is defined by
\[
    \gamma_n^{\sharp}(f) = \sum_{p=0}^n D_p(f) \, t^p
\]
where $(D_p)_{p=0}^n$ is the universal Hasse--Schmidt derivation.

We will consider in $B_n$ the $A$-module structure given by $\gamma_n^\sharp$
and the $A_n$-module structure given by $\rho_n^\sharp$. Notice that $B_n$ has
a second $A$-module structure (induced from the inclusion $A \subset A_n
\subset B_n$), but we will have no use for it.

For $m > n$, the map $\mu_{m,n} \colon U_n \times_{(X/S)_n} (X/S)_m \to U_m$
is defined by the natural projection
\[
    \mu^\sharp_{m,n}
    \colon
    B_m
    = A_m[t]/(t^{m+1})
    \longrightarrow
    B_n \otimes_{A_n} A_m
    = A_m[t]/(t^{n+1})
\]
when $m$ is finite, and
\[
    \mu^\sharp_{\infty,n}
    \colon
    B_\infty
    = A_\infty\llbracket t \rrbracket
    \longrightarrow
    B_n \otimes_{A_n} A_\infty
    = A_\infty[t]/(t^{n+1})
\]
when $m = \infty$.

\begin{remark}
If $X$ is a quasi-compact and quasi-separated scheme over a field $k$, then it
follows from the results in \cite{Bha16} that the functor of points of the arc
space $X_\infty$ can also be described as
\[
    X_\infty(C) 
    = 
    \Hom_k(
        \Spec C\llbracket t \rrbracket,
        X
    )
\]
for any $k$-algebra $C$.
Notice that this description avoids the use of formal schemes, and in
particular it gives a universal arc which is defined as an ordinary scheme. The
category of quasi-compact and quasi-separated schemes is probably large enough
to contain all arc spaces of geometric interest, but the theory developed in
\cite{Bha16} in very delicate, and we preferred to avoid relying on it. Our
results are local in nature, so it is enough for us to have an analogue of the
above formula in the affine case. This is precisely the content of
\cref{th:affine-univ-prop}, which is an elementary fact from commutative
algebra. Our results do not require the quasi-compact and quasi-separated
conditions.
\end{remark}

\section{The sheaves $\mathcal P_n$}

\label{s:sheaf-b}

The goal of this section is to define the sheaves $\mathcal P_n$ appearing in
\cref{th:differentials-arc-space,th:differentials-jet-scheme}. We start by
looking at the affine case. We continue with the notation introduced
in~\cref{s:arcs-jets}, so that given an $R$-algebra $A$ we have $A_n =
\HS^n_{A/R}$, $B_n = A_n[t]/(t^{n+1})$ when $n$ is finite, and $B_\infty =
A_\infty\llbracket t\rrbracket$.  

\begin{definition}
For any $n \in \mathbb N \cup \{\infty\}$, we define $P_n$ to be the $B_n$-module given by
\[
    P_n := t^{-n} A_n[t] / t A_n[t]
\]
when $n$ is finite and 
\[
    P_\infty := A_\infty(\!(t)\!) / t A_\infty\llbracket t\rrbracket
\]
when $n = \infty$.
\end{definition}

As $A_n$-modules, we have $B_n = \prod_{i=0}^n A_n t^i$ and $P_n =
\bigoplus_{j=0}^n A_n t^{-j}$. It is convenient to view an element $b \in B_n$
as a power series $b = \sum_{i=0}^n a_i t^i$ (a polynomial if $n$ is finite),
and an element $p \in P_n$ as a polynomial $p = \sum_{j=0}^n a'_{-j} t^{-j}$.
With this in mind, we can view the $B_n$-module structure as follows: the
action of an element $b \in B_n$ on an element $p \in P_n$ is simply given by
the product $b\cdot p$ of the two series, modulo $t A_n\llbracket t
\rrbracket$. 

Note that the $A_n$-module $\Hom_{A_n}(P_n, A_n)$ has a natural $B_n$-module
structure given by precomposition. That is, given $b \in B_n$ and $\phi \colon
P_n \to A_n$, we define $b\cdot\phi$ to be the homomorphism $P_n \to A_n$
defined by $(b\cdot\phi)(p) := \phi(b\cdot p)$. 

\begin{lemma}
\label{th:B_n-dual-of-P_n-local}
For every $n \in \mathbb N \cup \{\infty\}$, 
there exists a canonical isomorphism $B_n \simeq \Hom_{A_n}(P_n, A_n)$
as $B_n$-modules.
\end{lemma}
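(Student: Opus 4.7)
The plan is to write down an explicit candidate for the isomorphism via a pairing $B_n \times P_n \to A_n$, check it is $B_n$-linear, and then verify bijectivity by computing on natural $A_n$-bases.

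First I would introduce the ``residue'' (i.e.\ coefficient-of-$t^0$) map $\epsilon \colon P_n \to A_n$. Concretely, a class $p \in P_n$ has a unique representative $\sum_{j=0}^n a'_{-j} t^{-j}$ (with only finitely many nonzero terms when $n = \infty$), and $\epsilon$ sends it to $a'_0$. This map is $A_n$-linear. I then define
\[
    \Phi \colon B_n \longrightarrow \Hom_{A_n}(P_n, A_n),
    \qquad
    \Phi(b)(p) := \epsilon(b \cdot p),
\]
where $b \cdot p$ is the product computed in the description of the $B_n$-module structure on $P_n$ given just before the lemma. Since $\epsilon$ is $A_n$-linear and multiplication by $b$ is $A_n$-linear, $\Phi(b)$ is a well-defined element of $\Hom_{A_n}(P_n, A_n)$, and $\Phi$ itself is $A_n$-linear in $b$.

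Next I would verify that $\Phi$ is a homomorphism of $B_n$-modules. For $b, c \in B_n$ and $p \in P_n$, on one hand $\Phi(bc)(p) = \epsilon(bc \cdot p)$; on the other hand, by the definition of the $B_n$-action on $\Hom_{A_n}(P_n, A_n)$ by precomposition, $(b \cdot \Phi(c))(p) = \Phi(c)(b\cdot p) = \epsilon(c \cdot (b \cdot p)) = \epsilon(bc\cdot p)$. The two agree by associativity and commutativity of the multiplication in the ambient Laurent-series ring (or its truncation), so $\Phi$ is $B_n$-linear.

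Finally I would check bijectivity by computing on $A_n$-module bases. For $n$ finite, both $B_n = \bigoplus_{i=0}^n A_n t^i$ and $P_n = \bigoplus_{j=0}^n A_n t^{-j}$ are free $A_n$-modules of rank $n+1$, and $\Phi(t^i)(t^{-j}) = \epsilon(t^{i-j}) = \delta_{ij}$, so $\Phi$ sends the basis $\{t^i\}$ of $B_n$ to the dual basis of $\Hom_{A_n}(P_n, A_n)$; hence $\Phi$ is an isomorphism. For $n = \infty$, the same dual-basis calculation applies, but now $P_\infty = \bigoplus_{j\ge 0} A_\infty t^{-j}$ is a countable direct sum while $B_\infty = \prod_{i\ge 0} A_\infty t^i = A_\infty\llbracket t\rrbracket$ is the corresponding direct product; the identification $\Hom_{A_\infty}\bigl(\bigoplus_{j} A_\infty, A_\infty\bigr) = \prod_{j} A_\infty$ matches a functional $\phi$ with the sequence $(\phi(t^{-j}))_{j\ge 0}$, and under this identification $\Phi$ becomes the tautological map sending $\sum a_i t^i$ to $(a_j)_{j\ge 0}$, so it is again bijective.

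The only genuinely delicate point is this last step in the $n = \infty$ case: one must make sure that the definition of $\Phi(b)(p) = \epsilon(b \cdot p)$ makes sense for arbitrary power series $b$ and that bijectivity holds at the level of these infinite objects. The finite support of $p \in P_\infty$ makes the product $b \cdot p$ a well-defined Laurent polynomial modulo $tA_\infty\llbracket t\rrbracket$, which is what allows the duality between the direct sum $P_\infty$ and the direct product $B_\infty$ to go through cleanly.
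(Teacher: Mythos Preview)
Your proof is correct and follows essentially the same approach as the paper: the map $\Phi(b)(p) = \epsilon(b\cdot p)$ you define is exactly the paper's $\phi_b(p) = \sum_i a_i a'_{-i}$, and both arguments establish bijectivity via the duality $\Hom_{A_n}\bigl(\bigoplus_j A_n,\,A_n\bigr) \cong \prod_j A_n$ together with the dual-basis computation $\Phi(t^i)(t^{-j}) = \delta_{ij}$. Your packaging through the residue map is a slightly more conceptual phrasing, but the content is identical.
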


\begin{proof}
Since $B_n = \prod_{i=0}^n A_n t^i$ and $P_n = \bigoplus_{j=0}^n A_n t^{-j}$,
there is a canonical isomorphism of $A_n$-modules $B_n \simeq \Hom_{A_n}(P_n,
A_n)$ given by
\[
	b = \sum_{i=0}^n a_i t^i 
	\mapsto
	\left(\phi_b \colon p = \sum_{j=0}^n a'_{-j} t^{-j} 
    \mapsto
    \sum_{i=0}^n a_i a'_{-i}\right),
\]
and it is immediate to check that this isomorphism is compatible with the
respective $B_n$-module structures. 
\end{proof}

\begin{remark}
\label{th:B_n-dual-of-P_n-local-module-M}
\Cref{th:B_n-dual-of-P_n-local} generalizes to all $A_n$-modules, in the
following way. For every $A_n$-module $M$, the space $\Hom_{A_n}(P_n, M)$ has a
natural $B_n$-module structure given by precomposition, and there is a
canonical isomorphism 
\[
M\hat\otimes_{A_n}B_n \simeq \Hom_{A_n}(P_n, M)
\]
as $B_n$-modules. The proof follows the same arguments of the proof of
\cref{th:B_n-dual-of-P_n-local}, once one observes that $M\hat\otimes_{A_n}B_n
= \prod_{i=0}^n M t^i$.
\end{remark}

\begin{remark}
When $n$ is finite, we can view $\{t^{-j}\}_{j=0}^n$ as the dual basis of
$\{t^i\}_{i=0}^n$, and we have $P_n \simeq \Hom_{A_n}(B_n, A_n)$. Note, though, that
$P_\infty$ is not the $A_\infty$-dual of $B_\infty$.
\end{remark}

\begin{lemma}
\label{th:P_n=B_n}
For $n \in \mathbb N$, the morphism that sends $t^{-j}$ to $t^{-j+n}$ gives an
isomorphism of $B_n$-modules between $P_n$ and $B_n$. By contrast, $P_\infty$
and $B_\infty$ are not isomorphic, not even as $A_\infty$-modules. 
\end{lemma}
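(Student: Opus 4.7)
The two parts of the statement are independent and I would handle them separately.

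For the finite case, the idea is to realize the assignment $t^{-j} \mapsto t^{n-j}$ as multiplication by $t^n$ inside the Laurent polynomial ring $A_n[t, t^{-1}]$. Multiplication by $t^n$ sends $t^{-n}A_n[t]$ isomorphically onto $A_n[t]$ and sends $tA_n[t]$ into $t^{n+1}A_n[t]$, so it descends to an $A_n$-linear map
\[
    \varphi \colon P_n = t^{-n}A_n[t]/tA_n[t] \longrightarrow A_n[t]/(t^{n+1}) = B_n
\]
that bijects the $A_n$-basis $(t^{-j})_{0 \le j \le n}$ of $P_n$ onto the $A_n$-basis $(t^{n-j})_{0 \le j \le n}$ of $B_n$, hence is an isomorphism of $A_n$-modules. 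Compatibility with the $B_n$-actions follows from the observation that both $B_n$-actions are induced by the multiplication in the ambient ring $A_n[t, t^{-1}]$, and multiplication by the central element $t^n$ commutes with this multiplication; alternatively, one verifies $\varphi(b\cdot p) = b\cdot\varphi(p)$ directly on pairs of monomials $(t^i, t^{-j})$.

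For the infinite case, the plan is to extract from the $A_\infty$-module structure a $k$-vector-space invariant (for some residue field $k$) that distinguishes the two modules. Assuming $A_\infty \ne 0$ (otherwise both sides are trivial), choose a maximal ideal $\mathfrak m \subset A_\infty$ and set $k = A_\infty/\mathfrak m$. An $A_\infty$-linear isomorphism $P_\infty \simeq B_\infty$ would induce, after tensoring with $k$ over $A_\infty$, an isomorphism of $k$-vector spaces $P_\infty \otimes_{A_\infty} k \simeq B_\infty \otimes_{A_\infty} k$. Now $P_\infty = \bigoplus_{j \ge 0} A_\infty t^{-j}$ is free of countable rank, so the left-hand side has $k$-dimension $\aleph_0$. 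On the other hand, coefficient-wise reduction defines a surjective $A_\infty$-linear map $B_\infty = A_\infty\llbracket t \rrbracket \twoheadrightarrow k\llbracket t\rrbracket$, which factors through a surjection $B_\infty \otimes_{A_\infty} k \twoheadrightarrow k\llbracket t\rrbracket$.

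It therefore suffices to prove the classical fact that $\dim_k k\llbracket t\rrbracket > \aleph_0$ for every field $k$; this gives the desired contradiction. I would argue this via a dichotomy on $|k|$: if $k$ is uncountable, the series $(1-at)^{-1} = \sum_{i \ge 0} a^i t^i$, $a \in k$, are $k$-linearly independent by a Vandermonde argument, giving $\dim_k k\llbracket t\rrbracket \ge |k| > \aleph_0$; if $k$ is at most countable, then any countable-dimensional $k$-vector space has countable cardinality, whereas $|k\llbracket t\rrbracket| \ge 2^{\aleph_0}$ is uncountable. The main delicate point of the entire argument is this last uniform dimension bound on $k\llbracket t\rrbracket$; once it is secured, the contradiction is immediate and no further input is needed.
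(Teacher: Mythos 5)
Your proof is correct and follows essentially the same route as the paper: the finite case is multiplication by $t^n$ checked to be $B_n$-linear, and the infinite case rests on $P_\infty$ being free of countable rank while $B_\infty = A_\infty\llbracket t\rrbracket$ is not. The only difference is that the paper dismisses the second assertion as "clear," whereas you actually justify it (reduction modulo a maximal ideal plus the uncountable-dimensionality of $k\llbracket t\rrbracket$), which is a worthwhile addition since that step is not entirely trivial.
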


\begin{proof}
Multiplication by $t^n$ clearly gives an isomorphism of $A_n$-modules $P_n
\simeq B_n$, and one can check that this is compatible with the $B_n$-module
structures. The last assertion is also clear since $P_\infty \simeq
A_\infty[t]$ (as $A_\infty$-module) whereas $B_\infty = A_\infty\llbracket t
\rrbracket$.
\end{proof}

\begin{remark}
\label{r:mu-sharp}
For $m > n$, the homomorphism $\mu^\sharp_{m,n} \colon B_m \longrightarrow B_n \otimes_{A_n} A_m$
defining the morphism $\mu_{m,n} \colon U_n \times_{(X/S)_n} (X/S)_m \to U_m$
corresponds, via the duality given in \cref{th:B_n-dual-of-P_n-local}, to
the inclusion 
\[
    P_n \otimes_{A_n} A_m \longrightarrow P_m
\]
that sends $t^{-j}$ in $P_n \otimes_{A_n} A_m$ to $t^{-j}$ in $P_m$. 
When $m$ is finite, 
this inclusion corresponds via the natural isomorphisms 
$P_n \simeq B_n$ and $P_m \simeq B_m$ to the homomorphism $B_n \otimes_{A_n} A_m \to B_m$
given by multiplication by $t^{m-n}$.
\end{remark}

The definition of $P_n$ globalizes as follows. Given an arbitrary morphism of
schemes $X \to S$, source and target can be covered by affine charts $\Spec A
\subset X$ and $\Spec R \subset S$ so that the morphism is determined by gluing
affine morphisms $\Spec A \to \Spec R$. For every $n$, let $U_n$ be the
universal family given in \cref{eq:univ-jet}. Then the sheaves $P_n$
constructed above for the corresponding charts $\Spec B_n \subset U_n$ glue
together to give a sheaf $\mathcal P_n$ on $U_n$.

For every $n \in \mathbb N \cup \{\infty\}$, there is a natural isomorphism 
\[
	\rho_{n*}(\mathcal O_{U_n}) \simeq
	\Hom_{\mathcal O_{(X/S)_n}}\big(\rho_{n*}(\mathcal P_n),\mathcal O_{(X/S)_n}\big).
\]
Moreover, the right hand side has a natural $\mathcal O_{U_n}$-module structure
given by precomposing with the $\mathcal O_{U_n}$-module action on $\mathcal
P_n$, and with this structure is isomorphic to $\mathcal O_{U_n}$. Furthermore,
if $n$ is finite then we have $\mathcal P_n \simeq \mathcal O_{U_n}$. All these
statements can be checked locally on $X$, and therefore it suffices to consider
the case where $X = \Spec A$ and $S = \Spec R$, where they reduce to
\cref{th:B_n-dual-of-P_n-local,th:P_n=B_n}.

The analysis done in the affine case can be carried out in an identical way in
this more general setting. In particular, for each $m > n$ we get a natural
injective morphism
\[
    \pi_{m,n}^*(\rho_{n*}(\mathcal P_n))
    \longrightarrow 
    \rho_{m*}(\mathcal P_m).
\]
If $m$ and $n$ are finite, then $\mathcal P_m \simeq \mathcal O_{U_m}$ and
$\mathcal P_n \simeq \mathcal O_{U_n}$, and the above injection is conjugate to
the morphism $\pi_{m,n}^*(\rho_{n*}(\mathcal O_{U_n})) \to \rho_{m*}(\mathcal O_{U_m})$
given by multiplication by $t^{m-n}$.

\section{Derivations and differentials}

\label{s:der+diff}

In this section we prove the description of $\Omega_{(X/S)_n}$ stated in
\cref{th:differentials-arc-space,th:differentials-jet-scheme}. We continue with
the notations introduced in the previous section, so that given an $R$-algebra
$A$ we have $A_n = \HS^n_{A/R}$, $B_n = A_n[t]/(t^{n+1})$ when $n$ is finite,
and $B_\infty = A_\infty\llbracket t\rrbracket$.  
As before, we regard $B_n$ an $A_n$-module
via $\rho_n^\sharp$ and as an $A$-module via $\gamma_n^\sharp$,
where these maps are defined in \cref{eq:univ-jet-rings}.

\begin{lemma}
\label{th:derivations}
Let $m,n \in \mathbb N \cup \{\infty\}$.
Let $R$ be a ring and $A$ an $R$-algebra. Let $M$ be an $A_n$-module, and
consider $M \hat\otimes_{A_n} B_n$ with the $A$-module structure induced from
the $A$-module structure on $B_n$ (notice that $\hat\otimes_{A_n} =
\otimes_{A_n}$ when $n$ is finite). Then there is a natural isomorphism
\[
    \Der_R(A_n, M)
    \simeq
    \Der_R(A, M \hat\otimes_{A_n} B_n).
\]
If $m > n$ and $M$ is an $A_m$-module, then the natural map $\Der_R(A_m, M) \to
\Der_R(A_n, M)$ corresponds via the above isomorphism to the map induced by
$\mu_{m,n}^\sharp \colon B_m \to B_n \otimes_{A_n} A_m$.
\end{lemma}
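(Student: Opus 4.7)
The plan is to deduce both statements from the universal property of $A_n$ recorded in \cref{th:affine-univ-prop}, applied to the trivial square-zero extension $C = A_n \oplus M$. Recall that for any ring $B$ and any $B$-module $N$, an $R$-derivation $B \to N$ is the same as an $R$-algebra section of the projection $B \oplus N \to B$, where $N$ sits inside $B \oplus N$ as a square-zero ideal; so the content of the lemma is to match up, under \cref{th:affine-univ-prop}, sections of the projection $A_n \oplus M \to A_n$ with sections of an appropriate projection emanating from $A$.

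First I would invoke \cref{th:affine-univ-prop} with $C = A_n \oplus M$ to identify $\Hom_{R\text{-alg}}(A_n, A_n \oplus M)$ with $\Hom_{R\text{-alg}}(A, (A_n \oplus M)[t]/(t^{n+1}))$ for finite $n$ and with the analogous completed version for $n = \infty$. The key ring-theoretic identification is then
\[
    (A_n \oplus M) \hat\otimes_{A_n} B_n
    \;\simeq\;
    B_n \oplus (M \hat\otimes_{A_n} B_n),
\]
in which the second summand is a square-zero ideal inheriting its $B_n$-module structure from the tensor product. Under this splitting, the projection $A_n \oplus M \to A_n$ corresponds to the projection onto the $B_n$-factor, and $\gamma_n^\sharp \colon A \to B_n$ is the composition with that projection. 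Consequently, sections $A_n \to A_n \oplus M$ are in bijection with $R$-algebra homomorphisms $A \to B_n \oplus (M \hat\otimes_{A_n} B_n)$ lifting $\gamma_n^\sharp$, which is precisely $\Der_R(A, M \hat\otimes_{A_n} B_n)$ with respect to the $A$-module structure induced from $\gamma_n^\sharp$.

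For the compatibility assertion with $m > n$, the plan is to chase a derivation $D \in \Der_R(A_m, M)$ through the bijection for $m$, restrict along the canonical map $A_n \to A_m$, and chase back through the bijection for $n$. Precomposing the corresponding section $A \to B_m \oplus (M \hat\otimes_{A_m} B_m)$ with the map induced by $\mu_{m,n}^\sharp \colon B_m \to B_n \otimes_{A_n} A_m$ on both summands—using the $A_m$-module structure on $M$ to rewrite $M \hat\otimes_{A_m} (B_n \otimes_{A_n} A_m) = M \hat\otimes_{A_n} B_n$—produces exactly the element of $\Der_R(A, M \hat\otimes_{A_n} B_n)$ that matches the restricted derivation $D|_{A_n}$.

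The main obstacle I expect is the careful handling of the completed tensor product in the case $n = \infty$: one must verify that $(A_\infty \oplus M) \hat\otimes_{A_\infty} A_\infty\llbracket t\rrbracket$ genuinely splits as a topological ring into $B_\infty \oplus (M \hat\otimes_{A_\infty} B_\infty)$, with the second summand square-zero and with a topology compatible with the continuous form of \cref{th:affine-univ-prop} used to evaluate morphisms from $A$. Once this topological point is in hand, both parts of the lemma reduce to bookkeeping through the universal Hasse--Schmidt derivation.
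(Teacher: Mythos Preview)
Your approach is essentially identical to the paper's: both apply \cref{th:affine-univ-prop} to the trivial square-zero extension $A_n \oplus \varepsilon M$, identify $(A_n \oplus \varepsilon M)\hat\otimes_{A_n} B_n \simeq B_n \oplus \varepsilon(M \hat\otimes_{A_n} B_n)$, and read off derivations as sections lifting $\mathrm{id}_{A_n}$ (respectively $\gamma_n^\sharp$); the compatibility with truncation is likewise traced through the projection $R[t]/(t^{m+1}) \to R[t]/(t^{n+1})$ inducing $\mu_{m,n}^\sharp$. The only cosmetic difference is that the paper absorbs the $n=\infty$ topological issue into a single line by writing $A_n \hat\otimes_R R[t]/(t^{n+1}) = B_n$ uniformly, rather than flagging it as a separate obstacle.
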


\begin{proof}
To treat the cases of arcs and jets at the same time, we will identify
$R\llbracket t\rrbracket$ with $R[t]/(t^{n+1})$ when $n=\infty$. 

Fix an $A_n$-module $M$ as in the statement of the \lcnamecref{th:derivations},
and consider the $A_n$-module $A_n \oplus \varepsilon M$ with the $A_n$-algebra
structure defined by $(r\oplus \varepsilon m)\cdot(r'\oplus \varepsilon m') =
(rr'\oplus \varepsilon (rm'+r'm))$. The symbol $\varepsilon$ should be thought
as a variable with $\varepsilon^2 = 0$. Since $A_n \hat\otimes_R R[t]/(t^{n+1})
= B_n$, \cref{th:affine-univ-prop} gives a natural isomorphism
\[
    \Hom_{\text{$R$-alg}}(A_n, A_n \oplus \varepsilon M)
    \simeq
    \Hom_{\text{$R$-alg}}(A, B_n \oplus \varepsilon (M \hat\otimes_{A_n}B_n)).
\]
The two modules of derivations that we are interested in are mapped into each
other via this isomorphism. More precisely, we have
\begin{align*}
    \Der_R(A_n, M)
    &\simeq \left\{
        \phi \in \Hom_{\text{$R$-alg}}(A_n, A_n \oplus \varepsilon M)
        \,\,\big|\,\,
        \phi = {\rm id}_{A_n} \bmod \varepsilon
        \right\}
    \\
    &\simeq \left\{
        \phi \in \Hom_{\text{$R$-alg}}(A, B_n \oplus \varepsilon (M \hat\otimes_{A_n}B_n))
        \,\,\big|\,\,
        \phi = \gamma_n^\sharp \bmod \varepsilon
        \right\}
    \\
    &\simeq \Der_R(A, M \hat\otimes_{A_n} B_n).
\end{align*}

For the second statement of the \lcnamecref{th:derivations}, it suffices to note that the
map $\Der_R(A_m, M) \to \Der_R(A_n, M)$ corresponds, via the above
isomorphisms, to the map
\[
	\Hom_{\text{$R$-alg}}(A, B_m \oplus \varepsilon (M \hat\otimes_{A_m}B_m))
	\to 
	\Hom_{\text{$R$-alg}}(A, B_n \oplus \varepsilon (M \hat\otimes_{A_n}B_n))
\]
induced by the projection $R[t]/(t^{m+1}) \to R[t]/(t^{n+1})$, and the latter
is exactly the projection that induces $\mu_{m,n}^\sharp$.

All the isomorphisms in the proof are functorial with respect to all the data
involved, and therefore the resulting isomorphisms are natural. 
\end{proof}

\begin{remark}
The previous \lcnamecref{th:derivations} is the algebraic incarnation of an
intuitive geometric fact about tangent vectors on arc spaces and jet schemes.
For concreteness, we look at the case of arcs when $R = k$ is a field. 
Consider a point $\alpha$ in
$X_\infty$ with residue field $L$, here regarded as an $A_\infty$-module. Then an
element of $\Der_k(A_\infty, L)$ corresponds to a tangent vector to $X_\infty$
at $\alpha$. Using the isomorphism of \cref{th:derivations}, this tangent
vector gets identified with an element of $\Der_k(A, L\llbracket t\rrbracket)$,
which corresponds to a vector field on $X$ along the image of the arc $\alpha$.
These types of identifications are expected for moduli spaces of maps. For
example, given two smooth projective varieties $X$ and $Y$, we can consider the
space $\mathcal M = {\rm Mor}(Y, X)$ parametrizing morphisms from $Y$ to $X$.
Then, for a morphism $f \colon Y \to X$, we have the well-known
formula
\[
    T_{\mathcal M, f} \simeq H^0(Y, f^*T_X),
\]
which is analogous to \cref{th:derivations}.
\end{remark}

\begin{theorem}
\label{th:differentials}
Let $X \to S$ be a morphism of schemes. 
For every $n \in \mathbb N \cup \{\infty\}$ we have a natural isomorphism
\[
    \Omega_{(X/S)_n/S} 
    \simeq 
    \rho_{n*}(\gamma_n^*(\Omega_{X/S}) 
    \otimes
    \mathcal P_n)
\]
where $\rho_n \colon U_n \to (X/S)_n$ and $\gamma_n \colon U_n \to X$ are
defined in \cref{eq:univ-jet}, and these sheaves are isomorphic to
$\rho_{n*}(\gamma_n^*(\Omega_{X/S}))$ whenever $n$ is finite. Moreover, for
every $m,n \in \mathbb N \cup \{\infty\}$ with $m
> n$ the morphisms 
\[
\pi_{m,n}^*(\Omega_{(X/S)_n/S}) \to \Omega_{(X/S)_m/S}
\]
induced by the truncation maps are obtained from the natural inclusion
$\pi_{m,n}^*(\rho_{n*}(\mathcal P_n)) \to \rho_{m*}(\mathcal P_m)$ by tensoring
with $\rho_{n*}(\gamma_n^*(\Omega_{X/S}))$, and correspond to the maps
$\pi_{m,n}^*(\rho_{n*}(\mathcal O_{U_n})) \to \rho_{m*}(\mathcal O_{U_m})$
given by multiplication by $t^{m-n}$ whenever $m$ is finite.
\end{theorem}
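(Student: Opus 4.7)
The plan is to reduce to the affine case and then chain together the derivations lemma (\cref{th:derivations}), the duality between $P_n$ and $B_n$ (\cref{th:B_n-dual-of-P_n-local} together with its $M$-valued version \cref{th:B_n-dual-of-P_n-local-module-M}), and standard tensor--hom adjunction, finishing with Yoneda.

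First I would assume $X = \Spec A$ and $S = \Spec R$, and write $(X/S)_n = \Spec A_n$, $U_n = \Spf B_n$ as in \cref{s:arcs-jets}. Given an arbitrary $A_n$-module $M$, I would compute $\Hom_{A_n}(\Omega_{A_n/R}, M)$ by running the chain of natural isomorphisms
\begin{align*}
    \Hom_{A_n}(\Omega_{A_n/R}, M)
    &\simeq \Der_R(A_n, M) \\
    &\simeq \Der_R(A, M\hat\otimes_{A_n} B_n)
        && \text{(\cref{th:derivations})} \\
    &\simeq \Hom_A(\Omega_{A/R}, M\hat\otimes_{A_n} B_n) \\
    &\simeq \Hom_A(\Omega_{A/R}, \Hom_{A_n}(P_n, M))
        && \text{(\cref{th:B_n-dual-of-P_n-local-module-M})} \\
    &\simeq \Hom_{B_n}(\Omega_{A/R}\otimes_A B_n, \Hom_{A_n}(P_n, M))
        && \text{(base change)} \\
    &\simeq \Hom_{A_n}\bigl((\Omega_{A/R}\otimes_A B_n)\otimes_{B_n} P_n,\, M\bigr)
        && \text{(tensor--hom)} \\
    &\simeq \Hom_{A_n}(\Omega_{A/R}\otimes_A P_n, M).
\end{align*}
Because all the identifications above are natural in $M$, the Yoneda lemma delivers an isomorphism $\Omega_{A_n/R} \simeq \Omega_{A/R}\otimes_A P_n$ of $A_n$-modules, which is precisely the local form of the asserted formula once we recall that $\rho_{n*}\mathcal P_n$ on $\Spec A_n$ is $P_n$ viewed as an $A_n$-module and that $\gamma_n^*\Omega_{X/S}$ corresponds to $\Omega_{A/R}\otimes_A B_n$. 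For finite $n$, \cref{th:P_n=B_n} gives $P_n \simeq B_n$, so $\Omega_{A/R}\otimes_A P_n \simeq \Omega_{A/R}\otimes_A B_n$, which globalizes to $\rho_{n*}(\gamma_n^*\Omega_{X/S})$.

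The constructions glue: both sides are built from the sheaf $\mathcal P_n$ on $U_n$ and the sheaf $\gamma_n^*\Omega_{X/S}$, and the Yoneda identification respects restriction to affine opens because every step in the chain does. This gives the first part of the theorem globally.

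For the functoriality statement with $m > n$, I would apply the second assertion of \cref{th:derivations}: the natural map $\Der_R(A_m, M) \to \Der_R(A_n, M)$ (for an $A_m$-module $M$) corresponds to the map induced by $\mu_{m,n}^\sharp \colon B_m \to B_n\otimes_{A_n} A_m$. Transporting this along the isomorphisms above and invoking \cref{r:mu-sharp}, which identifies $\mu_{m,n}^\sharp$ under duality with the canonical inclusion $P_n\otimes_{A_n}A_m \hookrightarrow P_m$, yields exactly the map $\pi_{m,n}^*\rho_{n*}\mathcal P_n \to \rho_{m*}\mathcal P_m$ tensored with $\rho_{n*}(\gamma_n^*\Omega_{X/S})$. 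When $m$ is finite, the final clause is then immediate from the description at the end of \cref{r:mu-sharp}, where the inclusion $P_n \otimes_{A_n}A_m \hookrightarrow P_m$ becomes multiplication by $t^{m-n}$ under $P_n\simeq B_n$, $P_m\simeq B_m$.

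The main obstacle I expect is bookkeeping rather than conceptual: care is needed with the various module structures on $B_n$ (one via $\rho_n^\sharp$ and one via $\gamma_n^\sharp$) and with the completed tensor product when $n=\infty$, in order to verify that the tensor--hom adjunction step remains valid and that the $B_n$-module structure on $\Hom_{A_n}(P_n, M)$ used in \cref{th:B_n-dual-of-P_n-local-module-M} is exactly the one needed to pair with $\Omega_{A/R}\otimes_A B_n$ via $\gamma_n^\sharp$. Once this is checked, the naturality of every identification in the chain makes the globalization and the compatibility with truncation maps essentially automatic.
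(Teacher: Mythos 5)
Your proposal is correct and follows essentially the same route as the paper's proof: reduce to the affine case, run the chain of natural isomorphisms through \cref{th:derivations} and \cref{th:B_n-dual-of-P_n-local-module-M} to identify $\Hom_{A_n}(\Omega_{A_n/R},M)$ with $\Hom_{A_n}(\Omega_{A/R}\otimes_A P_n,M)$, conclude by Yoneda, and handle the truncation maps via the second part of \cref{th:derivations} together with \cref{r:mu-sharp}. The only difference is that you make the final tensor--hom step slightly more explicit by passing through $\Hom_{B_n}$, which the paper compresses into one line.
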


\begin{proof}
Since these properties are local in $X$, we can assume that $X = \Spec A$ and
$S = \Spec R$. Recall that all $B_n$-modules are regarded as $A_n$-modules
via $\rho_n^\sharp$ and as $A$-modules via $\gamma_n^\sharp$. 
With the same notation as in \cref{s:sheaf-b}, let $M$ be an
arbitrary $A_n$-module. By \cref{th:B_n-dual-of-P_n-local-module-M}, the
natural morphism
\[
	M \hat\otimes_{A_n} B_n \longrightarrow \Hom_{A_n}(P_n, M)
\]
is an isomorphism of $B_n$-modules. Then, by \cref{th:derivations}, we have
a chain of natural isomorphisms
\begin{align*}
    \Hom_{A_n} (\Omega_{A_n/R}, M)
    &\simeq
    \Der_R(A_n, M)
    \\&\simeq
    \Der_R(A, M \hat\otimes_{A_n} B_n)
    \\&\simeq
    \Hom_{A} (\Omega_{A/R}, M \hat\otimes_{A_n} B_n)
    \\&\simeq
    \Hom_{A} (\Omega_{A/R}, \Hom_{A_n}(P_n, M))
    \\&\simeq
    \Hom_{A_n} (\Omega_{A/R} \otimes_A P_n, M).
\end{align*}
It follows that there is a natural isomorphism of $A_n$-modules
$\Omega_{A_n/R} \simeq \Omega_{A/R} \otimes_A P_n$. 
Since $\Omega_{A/R} \otimes_A P_n = (\Omega_{A/R} \otimes_A B_n) \otimes_{B_n} P_n$,
this and the fact that, by \cref{th:P_n=B_n}, $P_n$ is isomorphic to $B_n$ as a
$B_n$-module if $n$ is finite, give the first statement. The other statements
follow immediately from the second part of \cref{th:derivations,r:mu-sharp}.
\end{proof}

\begin{remark}
\label{r:computing-fibers}
Suppose for simplicity that $X = \Spec A$ is affine over $S = \Spec R$, 
so that the formula in \cref{th:differentials} becomes, for $n = \infty$, 
\[
\Omega_{A_\infty/R} \simeq \Omega_{A/R} \otimes_A P_\infty.
\]
For every $A_\infty$-algebra $L$ (e.g., a field corresponding
to a point of $(X/S)_\infty$), we have
\begin{align*}
\Omega_{A_\infty/R} \otimes_{A_\infty} L 
&\simeq \Omega_{A/R} \otimes_A P_\infty \otimes_{A_\infty} L \\
&\simeq \Omega_{A/R} \otimes_A L\llbracket t\rrbracket  \otimes_{L\llbracket t\rrbracket } (P_\infty \otimes_{A_\infty} L).
\end{align*}
Note that $L\llbracket t\rrbracket  = B_\infty \,\hat\otimes_{A_\infty} L$. 
In the second step above we have used that 
$P_\infty \otimes_{A_\infty} L = L(\!(t)\!)/tL\llbracket t\rrbracket $
and hence it is not just a module over $B_\infty \otimes_{A_\infty} L$
but also over $L\llbracket t\rrbracket $.
The above formula will be useful in order to compute fibers of $\Omega_{X_\infty/S}$.
\end{remark}

It is worthwhile to work out explicitly what \cref{th:differentials} is telling us
in the affine case, when both $X$ and $S$ are affine and $X$ is of finite type over $S$. 
We start with the case when $n$ is finite.
If $S = \Spec R$ and $X = \Spec A$, then we can write
\[
A = \frac{R[x_1,\dots,x_r]}{(f_1,\dots,f_s)}.
\]
Then 
\[
A_n = \frac{R[x_i^{(p)} \mid i=1,\dots,r, \; p = 0,\dots, n]}
{(f_j^{(q)} \mid j=1,\dots,s, \; q = 0, \dots, n )}
\]
where for every $g$ and $k$ we set $g^{(k)} := D_k(g)$.
The presentation of $A$ yields the following presentation for $\Omega_{A/R}$:
\[
\bigoplus_j A\,df_j \xrightarrow{\;J\;} \bigoplus_i A\,dx_i \to \Omega_{A/R} \to 0.
\]
Here $J = [\partial f_j/\partial x_i]$ is the Jacobian matrix.
Similarly, the presentation of $A_n$ gives
\[
\bigoplus_{j,q} A_n\,df_j^{(q)} \xrightarrow{\;\widetilde J_n\;} 
\bigoplus_{i,p} A_n\,dx_i^{(p)} \to \Omega_{A_n/R} \to 0
\]
where $\widetilde J_n = [\partial f_j^{(q)}/\partial x_i^{(p)}]$.
\cref{th:differentials} provides an efficient
way of writing down  this matrix. 
Explicitly, the \lcnamecref{th:differentials}
gives the presentation
\[
\bigoplus_j P_n\,df_j \xrightarrow{\;J(t)\;} \bigoplus_i P_n\,dx_i \to \Omega_{A_n/R} \to 0
\]
where $J(t)$ is, entry by entry, the pull-back of $J$
via the universal jet. In particular, we can write
\[
J(t) = J + J't + J''t^2 + \dots + J^{(n)}t^n
\]
where $J^{(k)} = [D_k(\partial f_j/\partial x_i)]$. 
Using the decomposition $P_n = \bigoplus_{k=0}^n A_n t^{-k}$, we get
\[
\bigoplus_{j,q} A_n\,t^{-q}\,df_j \xrightarrow{\;J_n\;} 
\bigoplus_{i,p} A_n\,t^{-p}\,dx_i \to \Omega_{A_n/R} \to 0
\]
where 
\[
J_n = 
\begin{bmatrix}
J & J' & \cdots & J^{(n)} \\
0 & J & \cdots & J^{(n-1)} \\
\vdots & \vdots & \ddots & \vdots \\
0 & 0 &  \cdots & J
\end{bmatrix}.
\]
The isomorphism in \cref{th:derivations} maps
$\partial/\partial x_i^{(p)} \mapsto t^p\,\partial/\partial x_i$. 
By duality, we see that the isomorphism in \cref{th:differentials} 
maps $dx_i^{(p)} \mapsto t^{-p}\,dx_i$
and $df_j^{(q)} \mapsto t^{-q}\,df_j$. From this, we see that
\[
\widetilde J_n = J_n.
\]
In particular, one immediately recovers from this that
\[
\frac{\partial f_j^{(q)}}{\partial x_i^{(p)}} 
= \frac{\partial f_j^{(q+1)}}{\partial x_i^{(p+1)}}
\quad \text{for all} \quad p,q < n
\]
and
\[
\frac{\partial f_j^{(q)}}{\partial x_i^{(p)}} 
= D_{q-p}\left(\frac{\partial f_j}{\partial x_i}\right)
\quad \text{for all} \quad p<q.
\]
These relations are well-known and 
can be viewed as an elementary consequence of the chain rule.
It is a fun exercise is to verify them 
starting from an explicit polynomial. 
The above presentation of $\Omega_{A_n/R}$ is compatible
with the differentials $\Omega_{A_n/R} \otimes_{A_n} A_m \to \Omega_{A_m/R}$
of the truncation maps and can be used to compute them. 
Letting $n \to \infty$, one gets a similar description of the presentation
matrix for $\Omega_{A_\infty/R}$ where now the matrix is infinite dimensional.

\section{Invariant factors and Fitting invariants}

\label{s:fitting}

In the next section we will be interested in studying fibers of the sheaves of
differentials on jet schemes $(X/S)_n$. Using \cref{th:differentials}, this
will involve understanding the pull-back of $\Omega_{X/S}$ along a jet.
As a preparation, in this section we include some remarks on these types of
pull-backs.

Let $X$ be an arbitrary scheme over base scheme $S$. For a given 
$n \in \mathbb N \cup \{\infty\}$, 
consider a point $\alpha_n \in (X/S)_n$, and let $L_n$
denote the residue field of $\alpha_n$. We do not assume that $\alpha_n$ is a
closed point of $(X/S)_n$. By shrinking $X$ around $\psi_n(\alpha_n)$, we may
assume without loss of generality that $X = \Spec A$ and $S = \Spec R$ are
affine. Let $A_n$ and $B_n$ be the algebras defined in \cref{s:sheaf-b}, so
that $(X/S)_n = \Spec A_n$ and $U_n = \Spec B_n$. 

Algebraically, $\alpha_n$ is given by a morphism
\[
    \alpha_n^\sharp
    \colon
    A
    \longrightarrow
    B_n \,\hat\otimes_{A_n} L_n
\]
where $B_n \,\hat\otimes_{A_n} L_n = L_n[t]/(t^{n+1})$ if $n$ is finite and
$B_\infty \,\hat\otimes_{A_\infty} L_\infty = L_\infty\llbracket t \rrbracket$.
For simplicity, in what follows we will identify $L_\infty\llbracket t
\rrbracket$ with $L_n[t]/(t^{n+1})$ when $n = \infty$, and always use the more
suggestive notation $L_n[t]/(t^{n+1})$ instead of $B_n \hat\otimes_{A_n} L_n$.

Consider a finitely generated $A$-module $M$. We are interested in
understanding the structure of its pull-back along $\alpha_n$, which is given
by
\[
    M \otimes_A L_n[t]/(t^{n+1}).
\]
Notice that $L_n[t]/(t^{n+1})$ is a principal ideal ring (and a domain when $n
=\infty$). Since $M$ is finitely generated, the pull-back is also finitely
generated, and the structure theory for finitely generated modules over
principal ideal rings gives a unique decomposition
\[
    M \otimes_A L_n[t]/(t^{n+1})
    \;\simeq\;
    \big(L_n[t]/(t^{n+1})\big)^d
    \;\oplus\;
    \bigoplus_{i \geq d} L_n[t]/(t^{e_i})
\]
where the direct sum in the right hand side has finitely many nonzero terms and
$n+1 > e_d \geq e_{d+1} \geq \cdots \geq e_{d+r}$. When $0 \leq i < d$ we set $e_i =
n+1$ (so $e_i = \infty$ if $n=\infty$). If the dependency on $\alpha_n$ and $M$
needs to be emphasized, we will write $d = d(\alpha_n) = d(\alpha_n, M)$ and
$e_i = e_i(\alpha_n) = e_i(\alpha_n, M)$.

\begin{definition}
We call $\{e_i(\alpha_n, M)\}_{i=0}^\infty$ the sequence of \emph{invariant
factors} of $M$ with respect to $\alpha_n$. The number $d(\alpha_n, M)$ is
called the \emph{Betti number} of $M$ with respect to $\alpha_n$.
\end{definition}

The invariant factors of $M$ with respect to $\alpha_n$ determine the pull-back
$M \otimes_A L[t]/(t^{n+1})$ up to isomorphism.

If $\alpha_n$ is the truncation of another jet $\alpha_m$ (so
$\pi_{m,n}(\alpha_m) = \alpha_n$), the invariant factors with respect to
$\alpha_n$ and $\alpha_m$ are related. We have:
\[
    e_i(\alpha_n)
    =
    \min\{
        n+1,\,
        e_i(\alpha_m)
    \}.
\]
Notice that the Betti numbers with respect to $\alpha_n$ and $\alpha_m$ could
be different. We always have $d(\alpha_n) \geq d(\alpha_m)$.

The invariant factors are related to the Fitting ideals of $M$, whose
definition we recall briefly. Since $M$ is finitely generated, we can find a
presentation
\begin{equation}
\label{eq:presentation}
    F_1 
    \overset{\varphi}{\longrightarrow}
    F_0 \longrightarrow M \to 0,
\end{equation}
where $F_0$ and $F_1$ are free $A$-modules and $F_0$ is finitely generated.
Then $\Fitt^i(M) \subset A$ is the ideal generated by the minors of size
$\operatorname{rank}(F_0)-i$ of the matrix representing $\varphi$.
Geometrically, a point $x \in X$ belongs to the zero locus of $\Fitt^i(M)$ if
and only if the dimension of the fiber $M \otimes_A k(x)$ is $> i$.

Recall that if $J \subset A$ is an ideal, $c = \ord_{\alpha_n}(J)$ is defined
as the number for which $\alpha_n^\sharp(J) = (t^c)$. When $n$ is finite we
pick $c \in \{0,1,\ldots,n+1\}$, and when $n=\infty$ we have $c \in
\{0,1,\ldots,\infty\}$ (we use the convention $t^\infty = 0$).

\begin{definition}
Consider the numbers
\[
    c_i(\alpha_n, M) := \ord_{\alpha_n}(\Fitt^i(M)).
\]
We call $\{c_i(\alpha_n, M)\}_{i=0}^\infty$ the sequence of \emph{Fitting
invariants} of $M$ with respect to $\alpha_n$.
\end{definition}

The Fitting invariants are determined by the invariant factors. To see this,
consider the pull-back via $\alpha_n$ of the presentation in
\cref{eq:presentation}:
\begin{equation}
\label{eq:presentation2}
    \widetilde F_1 
    \overset{\widetilde\varphi}{\longrightarrow}
    \widetilde F_0 \longrightarrow
    M \otimes_{A} L_n[t]/(t^{n+1})
    \to 0.
\end{equation}
The structure theory says that, after picking appropriate bases,
$\widetilde\varphi$ is represented by a matrix with entries $t^{e_0}, t^{e_1},
t^{e_2}, \ldots$ along the main diagonal and zeros elsewhere. The minors of
$\widetilde\varphi$ are the pull-backs of the minors of $\varphi$. This
property is expressed by saying that ``the formation of Fitting ideals commutes
with base change'', and it gives that
\[
    \Fitt^i(M \otimes_R L_n[t]/(t^{n+1}))
    =
    \alpha_n^\sharp(\Fitt^i(M)) \cdot L_n[t]/(t^{n+1})
    =
    (t^{c_i}) \;\subset\; L_n[t]/(t^{n+1}),
\]
where $c_i = \ord_{\alpha_n}(\Fitt^i(M))$. Here the Fitting ideals of $M
\otimes_A L_n[t]/(t^{n+1})$ are computed with respect to its structure as a
module over $L_{n}[t]/(t^{n+1})$. On the other hand, we can compute these
Fitting ideals directly using the presentation in \cref{eq:presentation2}. We
get:
\[
    c_i = \min\{n+1,\, e_i + e_{i+1} + e_{i+2} + \cdots \}.
\]

If $n = \infty$, we get that $c_i = e_i + e_{i+1} + e_{i+2} + \cdots$, and we
see that in this case the invariant factors are determined by the Fitting
invariants. The Betti number counts the number of infinite Fitting invariants
and can be interpreted geometrically as the dimension of the fiber $M \otimes_A
k(\xi)$, where $\xi = \alpha_\infty(\eta)$ is the generic point of the arc
$\alpha_\infty$.
In particular, we have that $\ord_\alpha(\Fitt^d(M)) < \infty$.

\section{The fiber over a jet}

\label{s:fiber}

In this section, we assume that $X$ is a scheme of finite type over an
arbitrary base scheme $S$. This condition on $X$ guarantees that the sheaf of
differentials $\Omega_{X/S}$ is a finitely generated $\mathcal O_X$-module. In
particular, we can consider its Fitting ideals and can apply the results of
\cref{s:fitting}.

\begin{remark}
To compute $\Fitt^i(\Omega_{X/S})$, one can work locally on $X$ and $S$, and
use a relatively closed embedding of $X$ in some affine space $\mathbb A^N_S$
over $S$ to get a presentation as in \cref{eq:presentation} where $\varphi$ is
the Jacobian matrix of the embedding. As for any module, a point $x \in X$
belongs to the zero locus of $\Fitt^i(\Omega_{X/S})$ if and only if the
dimension of the fiber $\Omega_{X/S} \otimes_{\mathcal O_X} k(x)$ is $> i$. In
particular, if $X$ is a reduced and equidimensional scheme of finite type over
a field $k$, then $\Fitt^i(\Omega_{X/k})$ is zero when $i < \dim X$, and equals
the Jacobian ideal $\Jac_X$ when $i = \dim X$.
\end{remark}

We are interested in studying the fibers of $\Omega_{(X/S)_n/S}$, and relating
them to the Fitting invariants of $\Omega_{X/S}$.

\begin{theorem}
\label{th:fitting}
Let $X$ be a scheme of finite type over a base scheme $S$, let $n \in \mathbb N$, 
and consider a jet
$\alpha_n
\in (X/S)_n$. We do not assume that $\alpha_n$ is a closed point of
$(X/S)_n$, and we let $L_n$ be its residue field. Let $d_n$ and $\{e_i\}$ be the
Betti number and invariant factors of $\Omega_{X/S}$ with respect to $\alpha_n$.
Then
the isomorphism $\Omega_{(X/S)_n/S} \simeq \rho_{n*}(\gamma_n^*(\Omega_{X/S}))$
given by \cref{th:differentials} induces an isomorphism
\[
    \Omega_{(X/S)_n/S} \otimes_{\mathcal O_{(X/S)_n}} L_n
    \;\simeq\;
    \big(L_n[t]/(t^{n+1})\big)^{d_n}
    \;\oplus\;
    \bigoplus_{i \geq d_n} L_n[t]/(t^{e_i}).
\]
\end{theorem}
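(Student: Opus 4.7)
The plan is to trace the isomorphism $\Omega_{(X/S)_n/S} \simeq \rho_{n*}(\gamma_n^*(\Omega_{X/S}))$ furnished by \cref{th:differentials} through the base change to the residue field $L_n$ of $\alpha_n$, and then invoke the structure theorem for finitely generated modules over the principal ideal ring $L_n[t]/(t^{n+1})$. The resulting decomposition is, by construction, the one indexed by the Betti number and invariant factors introduced in \cref{s:fitting}, which will deliver the desired formula.

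The statement is local on $X$ and on $S$, so I would first reduce to the affine case $X = \Spec A$, $S = \Spec R$. In this setting the isomorphism from \cref{th:differentials} (in the simpler form valid for finite $n$, where $\mathcal{P}_n \simeq \mathcal{O}_{U_n}$ by \cref{th:P_n=B_n}) reads $\Omega_{A_n/R} \simeq \Omega_{A/R} \otimes_A B_n$, where $A$ acts on $B_n$ through $\gamma_n^\sharp$ and $B_n$ is regarded as an $A_n$-module through $\rho_n^\sharp$. Tensoring with $L_n$ over $A_n$ and using $B_n \otimes_{A_n} L_n = L_n[t]/(t^{n+1})$ gives
\[
    \Omega_{A_n/R} \otimes_{A_n} L_n
    \;\simeq\;
    \Omega_{A/R} \otimes_A L_n[t]/(t^{n+1}),
\]
which is exactly the pullback of $\Omega_{A/R}$ along the jet $\alpha_n^\sharp \colon A \to L_n[t]/(t^{n+1})$.

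To finish, I would apply the structure theorem for finitely generated modules over the principal ideal ring $L_n[t]/(t^{n+1})$. By the very definition of the Betti number $d_n$ and the invariant factors $\{e_i\}$ recalled in \cref{s:fitting}, this pullback decomposes as
\[
    \big(L_n[t]/(t^{n+1})\big)^{d_n}
    \;\oplus\;
    \bigoplus_{i \geq d_n} L_n[t]/(t^{e_i}),
\]
which is the formula claimed in the theorem. There is no substantive obstacle beyond careful bookkeeping; the only delicate point is that the chain of identifications has to produce an isomorphism of $L_n[t]/(t^{n+1})$-modules, where the action on the left-hand side is inherited from the $B_n$-module structure on $\gamma_n^*(\Omega_{X/S})$ (rather than merely from the $A_n$-structure). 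This compatibility is precisely what allows the structure-theorem decomposition on the right to be transported back to the fiber of $\Omega_{(X/S)_n/S}$ on the left.
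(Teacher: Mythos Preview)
Your proposal is correct and follows essentially the same route as the paper: reduce to the affine case, use \cref{th:differentials} to identify $\Omega_{A_n/R}\otimes_{A_n}L_n$ with $\Omega_{A/R}\otimes_A L_n[t]/(t^{n+1})$, and then read off the decomposition from the definition of the invariant factors. Your closing remark about the $L_n[t]/(t^{n+1})$-module compatibility is a nice bit of extra care, but the paper does not dwell on it.
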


\begin{proof}
After restricting to a suitable open set of $X$, we can assume that $X = \Spec
A$ and $S = \Spec R$. As before, we use the notation from \cref{s:sheaf-b}.
From \cref{th:differentials}, since $n$ is finite we see that $\Omega_{A_n/S}
\simeq \Omega_{A/S} \otimes_A B_n$, where $B_n$ is considered as an $A$-module
via $\gamma_n^\sharp$. This implies that 
\[
	\Omega_{A_n/S} \otimes_{A_n} L_n
	\simeq \Omega_{A/S} \otimes_A L_n[t]/(t^{n+1}),
\] 
where $L_n[t]/(t^{n+1})$ is considered as an $A$-module via $\alpha_n^\sharp$.
The \lcnamecref{th:fitting} now
follows from the definition of the invariant factors with respect to the jet
$\alpha_n$.
\end{proof}

We now restrict ourselves to
\emph{liftable jets}. By definition, these are points in a jet scheme $(X/S)_n$
that lie in the image of the truncation map $\pi_n \colon (X/S)_\infty \to
(X/S)_n$.
From the above \lcnamecref{th:fitting} it is immediate to compute the dimensions of
the fibers of $\Omega_{(X/S)_n/S}$ over liftable jets. 

\begin{corollary}
\label{th:fiber-dim-gen}
In addition to the assumptions of \cref{th:fitting}, assume that $\alpha_n =
\pi_n(\alpha)$ for some arc $\alpha \in (X/S)_\infty$.
Consider the ideal sheaf
$\mathcal J_{d_n} := \Fitt^{d_n}(\Omega_{X/S})$. Then\[
    \dim_{L_n}\big( \Omega_{(X/S)_n/S} \otimes_{\mathcal O_{(X/S)_n}} L_n \big)
    =
    (n+1)d_n + \ord_\alpha(\mathcal J_{d_n}).
\]
\end{corollary}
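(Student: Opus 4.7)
The plan is to combine the structure decomposition of \cref{th:fitting} with the relationship between invariant factors and Fitting invariants recorded in \cref{s:fitting}. Write $\{e_i\} = \{e_i(\alpha_n, \Omega_{X/S})\}$ and $\{e_i'\} = \{e_i(\alpha, \Omega_{X/S})\}$ for the invariant factors with respect to the jet and the arc respectively, and let $d = d(\alpha, \Omega_{X/S})$ be the Betti number with respect to $\alpha$.

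First, \cref{th:fitting} gives immediately
\[
    \dim_{L_n}\!\big(\Omega_{(X/S)_n/S} \otimes_{\mathcal O_{(X/S)_n}} L_n\big)
    = (n+1)d_n + \sum_{i \geq d_n} e_i,
\]
so the task reduces to showing $\sum_{i \geq d_n} e_i = \ord_\alpha(\mathcal J_{d_n})$. Since $\alpha_n = \pi_n(\alpha)$, the truncation formula from \cref{s:fitting} yields $e_i = \min\{n+1, e_i'\}$, and by definition of $d_n$ every index $i \geq d_n$ satisfies $e_i < n+1$; hence $e_i = e_i' < n+1$ for all such $i$. Also $d_n \geq d$ (again from \cref{s:fitting}), so every $e_i'$ appearing in the sum is finite and equals $e_i$.

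Finally, since $\alpha$ is an arc and the formation of Fitting ideals commutes with base change, the discussion in \cref{s:fitting} gives
\[
    \ord_\alpha(\mathcal J_{d_n})
    = c_{d_n}(\alpha, \Omega_{X/S})
    = \sum_{i \geq d_n} e_i',
\]
with no truncation since $n = \infty$ in the formula for arcs. Combining the two identifications yields $\sum_{i \geq d_n} e_i = \ord_\alpha(\mathcal J_{d_n})$, and substituting into the dimension count proves the corollary.

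The only delicate point is bookkeeping: one must be careful to distinguish $d = d(\alpha)$ from $d_n = d(\alpha_n)$ and verify that the interval $d \leq i < d_n$ (where $e_i' \geq n+1$ and $e_i = n+1$) contributes to the $(n+1)d_n$ term rather than to the sum $\sum_{i\geq d_n} e_i$. Once this is sorted out, the equality $\ord_\alpha(\mathcal J_{d_n}) = \sum_{i\geq d_n} e_i'$ becomes automatic from the arc formula $c_i(\alpha) = e_i' + e_{i+1}' + \cdots$.
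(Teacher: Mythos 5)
Your proposal is correct and follows essentially the same route as the paper's own proof: reduce via \cref{th:fitting} to computing $\sum_{i\ge d_n} e_i(\alpha_n)$, observe that $e_i(\alpha_n)=e_i(\alpha)$ for $i\ge d_n$ via the truncation formula, and invoke the arc-case identity $c_{d_n}(\alpha)=\sum_{i\ge d_n}e_i(\alpha)$ for the Fitting invariant. The extra bookkeeping you supply (why $e_i(\alpha_n)<n+1$ forces $e_i(\alpha_n)=e_i(\alpha)$, and how the range $d\le i<d_n$ is absorbed into the free part) is exactly what the paper leaves implicit.
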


\begin{proof}
Starting at the position $i=d_n$ we have an equality of invariant factors
$e_i(\alpha_n) = e_i(\alpha)$.
Since $\ord_{\alpha}(\mathcal J_{d_n}) = c_{d_n} = e_{d_n} + e_{d_n+1} +
\cdots$,
the result follows immediately from \cref{th:fitting}.
\end{proof}

\begin{remark}
The Betti number $d_n = d(\alpha_n, \Omega_{X/S})$ appearing in the previous
two results is hard to interpret in geometric terms. On the other hand, the
Betti number $d = d(\alpha, \Omega_{X/S})$ has a clear meaning: it is
the dimension of the fiber $\Omega_{X/S} \otimes_{\mathcal O_X} k(\xi)$, where
$\xi = \alpha(\eta) \in X$ is the generic point of $\alpha$. Recall that when
$n$ is large enough (bigger than all the invariant factors of $\alpha$) the
Betti numbers of $\Omega_{X/S}$ with respect to $\alpha$ and $\alpha_n = \pi_n(\alpha)$
coincide.
\end{remark}

The next corollary recovers \cite[Proposition~5.1]{dFD14}.

\begin{corollary}
\label{th:fiber-dim}
In addition to the assumptions of \cref{th:fiber-dim-gen}, assume that $S = \Spec k$
for a field $k$, that $X$ is reduced and equidimensional over $k$, and that the
arc $\alpha$ is not completely contained in the singular locus of $X$. Then,
for finite $n \geq \ord_\alpha(\Jac_X)$ we have
\[
    \dim_{L_n}\big( \Omega_{X_n/k} \otimes_{\mathcal O_{X_n}} L_n \big)
    =
    (n+1)\dim X + \ord_\alpha(\Jac_X).
\]
\end{corollary}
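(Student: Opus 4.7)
The plan is to derive this as a direct specialization of \cref{th:fiber-dim-gen}, where the only work is to identify the Betti number $d_n$ with $\dim X$ and the Fitting-ideal sheaf $\mathcal J_{d_n}$ with $\Jac_X$ under the stated hypotheses. First I would record the two facts that make this identification possible: since $X$ is reduced and equidimensional over $k$, the sheaf $\Omega_{X/k}$ has generic rank $\dim X$ on each component, so $\Fitt^i(\Omega_{X/k}) = 0$ for $i < \dim X$ and $\Fitt^{\dim X}(\Omega_{X/k}) = \Jac_X$, as noted in the remark preceding \cref{th:fitting}. Next, since $\alpha$ is not fully contained in $\Sing X$, its generic point $\xi = \alpha(\eta)$ lies in the smooth locus of $X$, so $\Omega_{X/k} \otimes k(\xi)$ has dimension exactly $\dim X$; by the geometric interpretation of the Betti number recalled after \cref{th:fiber-dim-gen}, this gives $d := d(\alpha, \Omega_{X/k}) = \dim X$.

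The heart of the argument — and the step that requires the quantitative hypothesis $n \geq \ord_\alpha(\Jac_X)$ — is showing that the Betti number stabilizes, in the sense that $d_n = d$. Using the invariant-factor/Fitting-invariant relation from \cref{s:fitting}, namely $c_d(\alpha) = \sum_{i \geq d} e_i(\alpha)$ when $n = \infty$, together with $\Fitt^{d}(\Omega_{X/k}) = \Jac_X$, I would write
\[
    \ord_\alpha(\Jac_X) \;=\; c_d(\alpha) \;=\; \sum_{i \geq d} e_i(\alpha),
\]
so in particular every finite invariant factor is bounded above by $\ord_\alpha(\Jac_X)$. Combining this with the hypothesis $n \geq \ord_\alpha(\Jac_X)$ and the truncation rule $e_i(\alpha_n) = \min\{n+1, e_i(\alpha)\}$, each $e_i(\alpha)$ with $i \geq d$ satisfies $e_i(\alpha) \leq n < n+1$, so none of these summands collapse into free $L_n[t]/(t^{n+1})$-summands. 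Hence the free rank at the jet level equals the free rank at the arc level: $d_n = d = \dim X$, and $e_i(\alpha_n) = e_i(\alpha)$ for all $i \geq d$.

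With these two identifications in place, \cref{th:fiber-dim-gen} applied with $\mathcal J_{d_n} = \Jac_X$ yields
\[
    \dim_{L_n}\big( \Omega_{X_n/k} \otimes_{\mathcal O_{X_n}} L_n \big)
    \;=\; (n+1)\, d_n + \ord_\alpha(\mathcal J_{d_n})
    \;=\; (n+1)\dim X + \ord_\alpha(\Jac_X),
\]
which is the desired formula. The only real obstacle is the second step, i.e.\ making the passage from the arc-level Betti number to the jet-level Betti number quantitative; once one sees that $\ord_\alpha(\Jac_X)$ serves as an upper bound for every relevant invariant factor, the corollary follows essentially for free from \cref{th:fiber-dim-gen}.
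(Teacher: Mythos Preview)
Your proposal is correct and follows essentially the same approach as the paper: identify $d = d(\alpha,\Omega_{X/k}) = \dim X$ from the smoothness of the generic point of $\alpha$, use $n \geq \ord_\alpha(\Jac_X)$ to force $d_n = d$, and then invoke \cref{th:fiber-dim-gen}. The paper's proof is more terse, merely asserting that the condition $n \geq \ord_\alpha(\Jac_X)$ makes the Betti numbers of $\alpha$ and $\alpha_n$ coincide, whereas you spell out the invariant-factor bound $e_i(\alpha) \leq c_d(\alpha) = \ord_\alpha(\Jac_X) < n+1$ that justifies this.
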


\begin{proof}
With the additional assumptions, we see that the Betti number 
of $\Omega_{X/S}$ with respect to $\alpha$ is $d = \dim X$, and therefore 
$\mathcal J_d = \Fitt^d(\Omega_{A/k}) = \Jac_X$. 
The condition $n \geq \ord_\alpha(\Jac_X)$ guarantees that the Betti numbers of
$\Omega_{A/k}$ with respect to $\alpha$ and $\alpha_n$ coincide.
The result is just a
restatement of \cref{th:fiber-dim-gen} in this case.
\end{proof}

\section{Embedding dimension}

\label{s:embdim}

We now study embedding dimensions of arcs and jets. Starting with this section
and for the reminder of the paper, we assume that $X$ is a scheme of finite
type over a perfect field $k$. 

In the following, let $\alpha \in X_\infty$ be a point, and denote by $L =
L_\infty$ the residue field of $\alpha$. We do not assume that $\alpha$ is a
closed point of $X_\infty$. For every $n \in \mathbb N$, we let $\alpha_n = \pi_n(\alpha)$
be the truncations, and denote by $L_n$ their residue fields. It will be convenient to
also allow the notation $\alpha_\infty$ for $\alpha$. 

For $n \in \mathbb N$, we denote $\dim(\alpha_n) := \trdeg(L_n/k)$. Since the ground
field $k$ is assumed to be perfect, we have
\[
    \dim(\alpha_n) = \dim_{L_n}(\Omega_{L_n/k}).
\]

We start with some preliminary lemmas. For ease of notation, in the discussion
of these preliminary properties we restrict ourselves to the affine setting and
assume that $X = \Spec A$ where $A$ is a finitely generated $k$-algebra. We
apply the notation from \cref{s:sheaf-b} with $R = k$. 

For each $n \in \mathbb N \cup \{\infty\}$, we let $I_n \subset A_n$ be the prime ideal defining $\alpha_n$.
When $m > n$ we have inclusions $I_n \subset I_m$. The Zariski tangent space of
$X_n$ at $\alpha_n$ is the dual of the $L_n$-vector space $I_n/I_n^2$, and
hence the embedding dimension of $X_n$ at $\alpha_n$ is given by
\[
    \embdim( \mathcal O_{X_n, \alpha_n} )
    = 
    \dim_{L_n}(I_n/I_n^2).
\]
Note that there are natural maps $I_n/I_n^2 \to I_m/I_m^2$ whenever $m > n$, and
\[
    I_\infty/I_\infty^2
    = \injlim_{n\to\infty} (I_n/I_n^2).
\]

\begin{lemma}
\label{th:emb-dim-jets-gen}
As above, let $X = \Spec A$ where $A$ is a finitely generated $k$-algebra, and let $\alpha \in X_\infty$. 
For every $n \in \mathbb N$
let $d_n$ be the Betti number of $\Omega_{A/k}$ with
respect to the truncation $\alpha_n = \pi_n(\alpha)$, and consider the ideal
$J_{d_n} :=
\Fitt^{d_n}(\Omega_{A/k})$. Then
\[
    \embdim( \mathcal O_{X_n, \alpha_n} )
    =
    (n+1)d_n - \dim(\alpha_n)
    + \ord_\alpha(J_{d_n}).
\]
\end{lemma}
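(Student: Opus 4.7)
The plan is to compute $\dim_{L_n}(I_n/I_n^2)$ via the conormal exact sequence associated to the surjection $A_n \twoheadrightarrow L_n$, converting the embedding dimension into a difference of dimensions of Kähler differential modules, and then apply \cref{th:fiber-dim-gen} to evaluate the main term.

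More precisely, for the composition $k \to A_n \to L_n$ one has the right exact sequence
\[
    I_n/I_n^2
    \longrightarrow
    \Omega_{A_n/k} \otimes_{A_n} L_n
    \longrightarrow
    \Omega_{L_n/k}
    \longrightarrow 0.
\]
The first step is to argue that this sequence is also left exact. Since $X_n$ is of finite type over $k$ (as $X$ is of finite type and $n$ is finite), $L_n$ is a finitely generated field extension of $k$, and because $k$ is perfect such an extension is separably generated. This separability guarantees that $L_n/k$ is geometrically regular, so $\Omega_{L_n/k}$ is free of rank $\trdeg(L_n/k) = \dim(\alpha_n)$ and the conormal map $I_n/I_n^2 \to \Omega_{A_n/k}\otimes L_n$ is injective. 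Taking $L_n$-dimensions then yields
\[
    \embdim(\mathcal O_{X_n,\alpha_n})
    =
    \dim_{L_n}(I_n/I_n^2)
    =
    \dim_{L_n}(\Omega_{A_n/k}\otimes_{A_n} L_n)
    - \dim(\alpha_n).
\]

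The second step is to plug in \cref{th:fiber-dim-gen}, which is applicable because $\alpha_n = \pi_n(\alpha)$ is a liftable jet: the isomorphism $\Omega_{(X/S)_n/S} \simeq \rho_{n*}(\gamma_n^*\Omega_{X/S})$ of \cref{th:differentials}, combined with the invariant-factor decomposition of $\alpha^*\Omega_{X/k}$, gives
\[
    \dim_{L_n}\big(\Omega_{A_n/k} \otimes_{A_n} L_n\big)
    = (n+1)d_n + \ord_\alpha(J_{d_n}),
\]
and substituting into the previous display produces the desired formula.

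The main obstacle is the left exactness of the conormal sequence, since the standard form of the sequence is only right exact in general. The perfectness hypothesis on $k$ is used precisely here, through the fact that every finitely generated extension of a perfect field is separable, which ensures the needed injectivity; if this hypothesis were removed one would only get an inequality in one direction. The remaining ingredients are essentially bookkeeping once \cref{th:differentials,th:fiber-dim-gen} are in hand.
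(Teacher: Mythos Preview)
Your proof is correct and follows essentially the same approach as the paper: both use the left-exactness of the conormal sequence for $k \to A_n \to L_n$ (justified by perfectness of $k$, via \cite[Theorem~25.2]{Mat89} in the paper) and then invoke \cref{th:fiber-dim-gen} together with $\dim(\alpha_n) = \dim_{L_n}(\Omega_{L_n/k})$.
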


\begin{proof}
Applying \cite[Theorem~25.2]{Mat89} to the sequence $k \to
(A_n)_{I_n} \to L_n$, we get an exact sequence
\[
    0 
    \to 
    I_n/I_n^2
    \longrightarrow
    \Omega_{A_n/k} \otimes_{A_n} L_n
    \longrightarrow
    \Omega_{L_n/k}
    \to 0.
\]
Here we used the assumption that $k$ is perfect. The
\lcnamecref{th:emb-dim-jets-gen} now follows from \cref{th:fiber-dim-gen} and
the equality $\dim(\alpha_n) = \dim_{L_n}(\Omega_{L_n/k})$.
\end{proof}

\begin{lemma}
\label{th:seq-diff}
With the same assumptions as \cref{th:emb-dim-jets-gen},
let $d$ and $e_i$ be the Betti number and invariant factors of $\Omega_{A/k}$ with
respect to the arc $\alpha$, and let $\Omega_{A_n/k}
\otimes_{A_n} A_m \to \Omega_{A_m/k}$ be the map induced by the truncation
morphism $\pi_{m,n} \colon X_m \to X_n$. Then, for finite
$m \geq n + \ord_\alpha(J_d)$, we have
\[
K := \ker \left(\Omega_{A_n/k} \otimes_{A_n} L \to \Omega_{A_m/k} \otimes_{A_m} L\right)
\simeq 
\left(
\frac{L[t]}{(t^{n+1})}
\right)^{d_n-d}
\oplus
\left(
\bigoplus_{i \geq d_n} \frac{L[t]}{(t^{e_i})}
\right),
\]
and hence $\dim_L(K) = (n+1)(d_n-d) + \ord_\alpha(J_{d_n})$.
In particular, for $n \ge \ord_\alpha(J_d)$ and $m \geq n + \ord_\alpha(J_d)$ we have
\[
K = \ker \left(\Omega_{A_n/k} \otimes_{A_n} L \to \Omega_{A_m/k} \otimes_{A_m} L\right)
\simeq 
\bigoplus_{i \geq d} \frac{L[t]}{(t^{e_i})}
\]
and $\dim_L(K) = \ord_\alpha(J_d)$.
\end{lemma}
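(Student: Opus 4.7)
The plan is to reduce the computation to a factor-by-factor analysis obtained from a single choice of invariant-factor bases compatible with both truncations. The crucial input is Theorem \ref{th:differentials}, which lets me replace the abstract truncation map on K\"ahler differentials with a concrete ``multiplication by $t^{m-n}$'' map.

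First, by Theorem \ref{th:differentials} together with Lemma \ref{th:derivations} (or directly, since $n$ is finite), I identify $\Omega_{A_n/k}\simeq \Omega_{A/k}\otimes_A B_n$ and the morphism $\Omega_{A_n/k}\otimes_{A_n}A_m\to\Omega_{A_m/k}$ with the map
\[
\Omega_{A/k}\otimes_A A_m[t]/(t^{n+1})\;\xrightarrow{\;\cdot\, t^{m-n}\;}\;\Omega_{A/k}\otimes_A A_m[t]/(t^{m+1})
\]
(using that $A_m$-module structures are compatible via $\rho_n^\sharp$ and $\pi_{m,n}^\sharp$). Tensoring over $A_m$ with $L$ (where $L$ is pulled back through $\alpha$) turns the map into
\[
\Omega_{A/k}\otimes_A L[t]/(t^{n+1})\;\xrightarrow{\;\cdot\, t^{m-n}\;}\;\Omega_{A/k}\otimes_A L[t]/(t^{m+1}),
\]
with $L[t]/(t^{n+1})$ and $L[t]/(t^{m+1})$ regarded as $A$-modules through truncations of $\alpha^\sharp\colon A\to L\llbracket t\rrbracket$.

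Second, I choose a presentation $F_1\xrightarrow{\varphi} F_0\to\Omega_{A/k}\to 0$ and a pair of bases diagonalizing the pullback $\alpha^*\varphi$ over the principal ideal domain $L\llbracket t\rrbracket$, so that the diagonal entries are $t^{e_0(\alpha)},t^{e_1(\alpha)},\ldots$ (with $e_i(\alpha)=\infty$ for $i<d$). Reducing these same bases modulo $t^{n+1}$ and modulo $t^{m+1}$ yields compatible direct sum decompositions
\[
\Omega_{A/k}\otimes_A L[t]/(t^{n+1})\simeq\bigoplus_{i\ge 0} L[t]/(t^{e_i(\alpha_n)}),\qquad \Omega_{A/k}\otimes_A L[t]/(t^{m+1})\simeq\bigoplus_{i\ge 0} L[t]/(t^{e_i(\alpha_m)}),
\]
in which the truncation map is multiplication by $t^{m-n}$ summand by summand. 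This compatibility, which is the step requiring the most care, is the main obstacle: I must argue that the same lift of bases works simultaneously for both truncations. This follows because the diagonalization is constructed over $L\llbracket t\rrbracket$ and then reduced, so the passage from level $n$ to level $m$ is induced by the quotient $L[t]/(t^{m+1})\twoheadrightarrow L[t]/(t^{n+1})$, whose dual (sending $t^{-j}\mapsto t^{-j}$) is, by Remark \ref{r:mu-sharp}, precisely multiplication by $t^{m-n}$.

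Third, I analyze each summand $L[t]/(t^{e_i(\alpha_n)})\xrightarrow{\cdot t^{m-n}} L[t]/(t^{e_i(\alpha_m)})$. Using the hypothesis $m\ge n+\ord_\alpha(J_d)=n+\sum_{i\ge d}e_i(\alpha)$, I obtain $m-n\ge e_i(\alpha)$ for every $i\ge d$. Splitting by cases:
\begin{itemize}
\item For $i<d$ we have $e_i(\alpha)=\infty$, so the map is $L[t]/(t^{n+1})\xrightarrow{\cdot t^{m-n}} L[t]/(t^{m+1})$, which is injective (contributes nothing to $K$).
\item For $d\le i<d_n$ we have $e_i(\alpha_n)=n+1$ and $e_i(\alpha_m)=e_i(\alpha)\le m-n$, so the map vanishes identically and contributes a summand $L[t]/(t^{n+1})$ to $K$; there are $d_n-d$ such indices.
\item For $i\ge d_n$ we have $e_i(\alpha_n)=e_i(\alpha_m)=e_i(\alpha)\le m-n$, so again the map vanishes and contributes $L[t]/(t^{e_i(\alpha)})$.
\end{itemize}
Summing up yields the stated isomorphism. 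The dimension count then follows by $\dim_L K=(n+1)(d_n-d)+\sum_{i\ge d_n}e_i(\alpha)=(n+1)(d_n-d)+\ord_\alpha(J_{d_n})$, where the last equality is the definition of $c_{d_n}$ in terms of invariant factors, as established in Section \ref{s:fitting}.
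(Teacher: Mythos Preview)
Your proof is correct and follows essentially the same route as the paper's: both invoke \cref{th:differentials} (together with \cref{r:mu-sharp}) to identify the truncation map with multiplication by $t^{m-n}$ on the invariant-factor decomposition, and then analyze the kernel summand by summand using $m-n\ge\ord_\alpha(J_d)\ge e_i(\alpha)$ for $i\ge d$. Your added explanation of why a single diagonalization over $L\llbracket t\rrbracket$ yields compatible bases at both levels is a helpful elaboration of a step the paper compresses into the citation of \cref{th:fitting,th:differentials}.
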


\begin{proof}
Since $m \geq \ord_\alpha(J_d)$, we see that the Betti numbers of
$\Omega_{A/k}$ with respect to $\alpha$ and $\alpha_m$ coincide.
By \cref{th:fitting,th:differentials} (see also \cref{r:mu-sharp}), 
we see that $K$ is isomorphic to the kernel
of the map 
\[
\Bigg(\frac{L[t]}{(t^{n+1})}\Bigg)^{\!\!d} 
\oplus
\Bigg(\frac{L[t]}{(t^{n+1})}\Bigg)^{\!\!d_n-d} 
\oplus
\Bigg(\bigoplus_{i \geq d_n} \frac{L[t]}{(t^{e_i})}\Bigg)
\xrightarrow{\cdot t^{m-n}}
\Bigg(\frac{L[t]}{(t^{m+1})}\Bigg)^{\!\!d} 
\oplus
\Bigg(\bigoplus_{i=d}^{d_n-1} \frac{L[t]}{(t^{e_i})}\Bigg)
\oplus
\Bigg(\bigoplus_{i \geq d_n} \frac{L[t]}{(t^{e_i})}\Bigg)
\]
given by multiplication by $t^{m-n}$. Since we have $m-n \ge \ord_\alpha(J_d)
\ge e_i$ for all $i \ge d$, the first assertion follows. Notice that $\sum_{i
\geq d_n} e_i = \ord_\alpha(J_{d_n})$. 
The last assertion follows form this and the fact that if $n \ge \ord_\alpha(J_d)$
then $d_n = d$.
\end{proof}

\begin{lemma}
\label{th:lambda}
With the same assumptions as \cref{th:seq-diff}, consider the natural
morphism
\[
    \lambda_n \colon
    I_n/I_n^2 \otimes_{A_n} L \longrightarrow I_\infty/I_\infty^2
\]
induced by the truncation map. 
Then
\[
    \dim_L(\Im(\lambda_n))
    \ge
    (n+1)d - \dim(\alpha_n).
\]
\end{lemma}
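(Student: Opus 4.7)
The plan is to compute $\dim_L(\Im(\lambda_n))$ as the difference $\dim_L(I_n/I_n^2 \otimes_{A_n} L) - \dim_L(\ker(\lambda_n))$, extracting the first term from \cref{th:emb-dim-jets-gen} and the second by reducing to \cref{th:seq-diff} via a snake-lemma argument.

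First I would set up two conormal exact sequences: applying \cite[Theorem~25.2]{Mat89} to $k \to (A_n)_{I_n} \to L_n$ and to $k \to (A_\infty)_{I_\infty} \to L$, and tensoring the former with $L$ over $L_n$, one obtains a commutative diagram with exact rows whose left vertical is $\lambda_n$, whose middle vertical $f \colon \Omega_{A_n/k} \otimes_{A_n} L \to \Omega_{A_\infty/k} \otimes_{A_\infty} L$ is the truncation-induced map, and whose right vertical is $g \colon \Omega_{L_n/k} \otimes_{L_n} L \to \Omega_{L/k}$. Because $k$ is perfect, every extension in the tower $k \subset L_n \subset L$ is separable, so the Jacobi--Zariski sequence for this tower makes $g$ injective. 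The snake lemma then yields $\ker(\lambda_n) \simeq \ker(f)$.

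Second, I would identify $\ker(f)$ with the stable kernel computed in \cref{th:seq-diff}. Since $A_\infty = \varinjlim_m A_m$ and K\"ahler differentials commute with filtered colimits, tensoring with $L$ gives $\Omega_{A_\infty/k} \otimes_{A_\infty} L = \varinjlim_m (\Omega_{A_m/k} \otimes_{A_m} L)$, and the exactness of filtered colimits identifies $\ker(f)$ with $\bigcup_{m \geq n} \ker(\Omega_{A_n/k} \otimes_{A_n} L \to \Omega_{A_m/k} \otimes_{A_m} L)$. By \cref{th:seq-diff}, for $m \geq n + \ord_\alpha(J_d)$ this kernel is already independent of $m$ and has dimension $(n+1)(d_n - d) + \ord_\alpha(J_{d_n})$, hence $\dim_L \ker(f)$ equals this same quantity.

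Finally, \cref{th:emb-dim-jets-gen}, combined with the trivial change of scalars $I_n/I_n^2 \otimes_{A_n} L = I_n/I_n^2 \otimes_{L_n} L$, yields $\dim_L(I_n/I_n^2 \otimes_{A_n} L) = (n+1)d_n - \dim(\alpha_n) + \ord_\alpha(J_{d_n})$; subtracting the dimension of $\ker(\lambda_n)$ collapses the $d_n$ and $\ord_\alpha(J_{d_n})$ terms and leaves exactly $(n+1)d - \dim(\alpha_n)$. The delicate point is the passage to the colimit in the second step---once the identification $\ker(f) = \varinjlim_m \ker(\Omega_{A_n/k} \otimes L \to \Omega_{A_m/k} \otimes L)$ is in place and one notes that this colimit stabilizes, the rest is a mechanical dimension count.
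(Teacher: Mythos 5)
Your argument is essentially the paper's proof rearranged: both rest on the two conormal exact sequences supplied by \cite[Theorem~25.2]{Mat89}, a snake-lemma identification of the kernel of the map on conormal modules with the kernel $K$ computed in \cref{th:seq-diff}, and the dimension count from \cref{th:emb-dim-jets-gen}. The only structural difference is where the colimit is taken: the paper compares levels $n$ and finite $m$ and passes to the limit of the images $\Im(\lambda_{n,m})$ at the very end, whereas you compare level $n$ directly with level $\infty$ and push the colimit into the identification of $\ker(f)$ with the stabilized finite-level kernel. The two are interchangeable, and your reduction of $\ker(f)$ to the stable kernel (filtered colimits, nested kernels, stabilization for $m \geq n + \ord_\alpha(J_d)$) is correct.

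One caveat: your justification for the injectivity of $g \colon \Omega_{L_n/k} \otimes_{L_n} L \to \Omega_{L/k}$ is not valid as stated. Perfectness of $k$ makes $L_n/k$ and $L/k$ separable, but it does not make $L/L_n$ separable, and separability of the \emph{top} extension is what is needed for left exactness of the Jacobi--Zariski sequence. In positive characteristic the residue field extension between an arc and its truncation can be purely inseparable even over a perfect field: for $X = \mathbb A^1$ over $k=\mathbb F_p$ and the arc $x \mapsto s^p + st$, one has $L_0 = k(s^p) \subset L = k(s)$, and $g$ kills $d(s^p)$. When $g$ fails to be injective, the snake lemma only gives $\ker(\lambda_n) \subseteq \ker(f)$ with quotient embedded in $\ker(g)$, so the dimension count is not automatic. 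To be fair, the paper's own diagram asserts the exactness of the corresponding column without comment, so this is a subtlety you share with the source rather than a gap you introduced; but the reason you give is wrong, and to make the step airtight one needs a separate argument that the image of $K$ in $\Omega_{L_n/k}\otimes_{L_n} L$ vanishes (in characteristic $0$ your separability argument is of course fine).
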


\begin{proof}
Consider $m \geq n + \ord_\alpha(J_d)$. We have the following commutative
diagram with exact rows and columns:
\[\xymatrixrowsep{1.5pc}
    \xymatrix{
        & 0 \ar[d]
        & 0 \ar[d]
        & 0 \ar[d]
        &
        \\
        0 \ar[r]
        & K' \ar[r] \ar[d]
        & K \ar[r] \ar[d]
        & K'' \ar[d]
        &
        \\
        0 \ar[r]
        & I_n/I^2_n \otimes L \ar[r] \ar[d]^{\lambda_{n,m}}
        & \Omega_{A_n/k} \otimes L \ar[r] \ar[d] 
        & \Omega_{L_n/k} \otimes L \ar[r] \ar[d] 
        & 0
        \\
        0 \ar[r]
        & I_{m}/I^2_{m} \otimes L \ar[r] 
        & \Omega_{A_{m}/k} \otimes L \ar[r] 
        & \Omega_{L_{m}/k} \otimes L \ar[r] 
        & 0
        \\
    }
\]
Recall that by \cref{th:seq-diff,th:emb-dim-jets-gen} we have
\begin{align*}
\dim_L(I_n/I^2_n \otimes L)
&= (n+1)d_n - \dim(\alpha_n) + \ord_\alpha(J_{d_n}), \\
\dim_L(K)
&= (n+1)(d_n-d) + \ord_\alpha(J_{d_n}).
\end{align*}
Then from the first column of the diagram we see that
\begin{align*}
	\dim_L(\Im(\lambda_{n,m})) 
	&= \dim_L(I_n/I^2_n \otimes L) - \dim_L(K') \\
	&\ge \dim_L(I_n/I^2_n \otimes L) - \dim_L(K) \\
	&= (n+1)d - \dim(\alpha_n).
\end{align*}
Since $\Im(\lambda_n) = \injlim_m\Im(\lambda_{n,m})$, the result follows.
\end{proof}

\begin{lemma}
\label{th:embdim-limit}
With the same assumptions as \cref{th:seq-diff}, we have 
\[
	\embdim(\mathcal O_{X_\infty, \alpha}) \ge 
        \limsup_{n \to \infty} \big((n+1)d - \dim(\alpha_n)\big).
\]
\end{lemma}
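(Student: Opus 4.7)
The plan is to identify $\embdim(\mathcal O_{X_\infty,\alpha})$ with $\dim_L(\mathfrak m/\mathfrak m^2)$ where $\mathfrak m$ is the maximal ideal at $\alpha$, realize this cotangent space as the ascending union of the images of the maps $\lambda_n$ from \cref{th:lambda}, and then invoke that lemma. Working locally with $X = \Spec A$ in the notation of \cref{s:sheaf-b}, standard local algebra gives
\[
    \embdim(\mathcal O_{X_\infty,\alpha})
    = \dim_L(\mathfrak m/\mathfrak m^2)
    = \dim_L\!\big((I_\infty/I_\infty^2)\otimes_{A_\infty/I_\infty}L\big).
\]
The essential structural input is $A_\infty = \varinjlim_n A_n$, a consequence of $X_\infty = \varprojlim_n X_n$ in the category of affine schemes; this yields $I_\infty = \bigcup_n I_n$ and $I_\infty^2 = \bigcup_n I_n^2$.

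Post-composing each $\lambda_n$ with the canonical map $I_\infty/I_\infty^2 \to \mathfrak m/\mathfrak m^2$, I obtain $L$-linear maps $I_n/I_n^2 \otimes_{A_n} L \to \mathfrak m/\mathfrak m^2$ whose images I still denote $\Im(\lambda_n)$; by \cref{th:lambda}, these images have $L$-dimension $(n+1)d - \dim(\alpha_n)$. Two observations then finish the argument. First, the images form an ascending chain: since the map $I_n/I_n^2 \to I_\infty/I_\infty^2$ factors through $I_{n+1}/I_{n+1}^2$, we have $\Im(\lambda_n) \subset \Im(\lambda_{n+1})$, which gives the monotonicity of the sequence. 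Second, the images exhaust $\mathfrak m/\mathfrak m^2$: every class in $I_\infty/I_\infty^2$ is represented by an element of some $I_n$, and such classes $L$-span $\mathfrak m/\mathfrak m^2$. Since the dimension of an increasing union of $L$-subspaces is the supremum of their dimensions, and this supremum coincides with the limit for a monotone sequence, the desired equality follows.

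The one mildly delicate point is the compatibility between the $A_\infty/I_\infty$-module structure on $I_\infty/I_\infty^2$ and the $L$-vector space structure on $\mathfrak m/\mathfrak m^2$ implicit when identifying $\Im(\lambda_n)$ with its $L$-span in the cotangent space. This is routine given that filtered colimits commute with tensor products and quotients, but it is essentially the only step requiring attention; everything else is immediate from \cref{th:lambda}.
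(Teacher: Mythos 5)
Your proposal is correct and follows the same route as the paper: both arguments identify the cotangent space at $\alpha$ with the increasing union of the images $\Im(\lambda_n)$ via $I_\infty/I_\infty^2 = \injlim(I_n/I_n^2)$, and then read off the dimensions from \cref{th:lambda} together with the inclusions $\Im(\lambda_n) \subset \Im(\lambda_{n+1})$. The compatibility point you flag (tensoring with $L$ versus the $A_\infty/I_\infty$-module structure) is handled implicitly in the paper by the same filtered-colimit argument, so your write-up is just a slightly more explicit version of the paper's proof.
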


\begin{proof}
Consider the maps $\lambda_n$ of \cref{th:lambda}. Since $I_\infty/I_\infty^2 =
\injlim (I_n/I_n^2)$, we also have that $I_\infty/I_\infty^2 = \injlim
(\Im(\lambda_n))$. Therefore the assertion follows from \cref{th:lambda}.
\end{proof}

We now return to the global case of schemes of finite type over $k$. 
Recall that the dimension of a scheme $X$ of finite type over $k$ at a point $\xi$, denoted
$\dim_\xi(X)$, is defined as the infimum of the dimensions (over $k$) of all open
neighborhoods of $\xi$ in $X$. 

\begin{theorem}
\label{th:embdim>=dimX}
Assume that $X$ is a scheme of finite type over $k$. Then
for any arc $\alpha \in X_\infty$, letting 
$\xi = \alpha(\eta) \in X$ denote the generic point of the arc, we have
\[
    \embdim(\mathcal O_{X_\infty, \alpha}) 
    \geq
    \dim_\xi(X) - \dim(\alpha_0).
\]
\end{theorem}

\begin{proof}
By \cref{th:embdim-limit} and the fact that the Betti number of $\Omega_{X/k}$
with respect to $\alpha$ is bounded below by $\dim_\xi(X)$.
\end{proof}

Since the ground field if perfect, a
point $x$ on a scheme of finite type $X$ is singular if and only if
$\dim(\Omega_X \otimes k(x)) > \dim_x(X)$. In particular, non-reduced points of
$X$ are singular. We denote by $\Sing X$ the singular locus of $X$.
Without further mention, we will use the fact that, as the ground field is perfect, 
if $X$ is a variety then $\Sing X$ is a proper closed subset of $X$ and hence, in particular, has smaller dimension.

\begin{lemma}
\label{th:dim-a_n}
Let $X$ be a a scheme of finite type over $k$, let $\alpha \in X_\infty$, 
and let $Z \subset X$ be the closure of the generic point $\alpha(\eta)$ of $\alpha$. 
Then for every $n \in \mathbb N$ we have
\[
\dim(\alpha_n) \le (n+1)\dim(Z).
\]
\end{lemma}

\begin{proof}
In characteristic zero the statement follows easily from
Kolchin's Irreducibility Theorem \cite{Kol73}.
Over an arbitrary perfect field, the argument can be adjusted as follows. 
Note that $\alpha \in Z_\infty \setminus (\Sing Z)_\infty$. 
Then, by \cite[Theorem~2.9]{Reg09}, $\alpha$ belongs to the unique irreducible 
component $C$ of $Z_\infty$ that dominates $Z$.
It follows that the closure of the projection of $C$ to $Z_n$
has dimension $(n+1)\dim(Z)$, and the assertion follows.
\end{proof}

\begin{proposition}
\label{th:embdim=infty}
Let $X$ be scheme of finite type,
let $\alpha \in X_\infty$, and denote by $\xi = \alpha(\eta) \in X$ the generic point of $\alpha$. 
Assume that one of the following two conditions holds:
\begin{enumerate}
\item
$\alpha \in Y_\infty$ where $Y \subset X$ is a closed subscheme 
with $\dim_\xi(Y) < \dim_\xi(X)$; or
\item
$\alpha \in (\Sing X)_\infty$.
\end{enumerate}
Then 
\[
    \embdim(\mathcal O_{X_\infty, \alpha}) = \infty.
\]
\end{proposition}

\begin{proof}
Suppose first that~(1) holds. By the geometric interpretation of
Betti numbers, we have $d(\alpha,\Omega_X) \ge \dim_\xi(X)$. 
Since $\dim(\alpha_n) \le (n+1)\dim_\xi(Y)$ by \cref{th:dim-a_n}, and 
$\dim_\xi(Y) < \dim_\xi(X)$, \cref{th:embdim-limit} implies that
\[
	\embdim(\mathcal O_{X_\infty, \alpha}) \ge
        \limsup_{n\to\infty}(n+1)(d(\alpha,\Omega_X)-\dim(Y)) = \infty.
\]

Suppose then that~(2) holds. If
$\dim_\xi(\Sing X)_\infty < \dim_\xi (X)$, then the assertion follows from case~(1).
Otherwise, $X$ is non-reduced at $\xi$ and hence
$d(\alpha,\Omega_X) > \dim _\xi (X)$, 
and we conclude that $\embdim(\mathcal O_{X_\infty,\alpha}) =
\infty$ by \cref{th:embdim-limit,th:dim-a_n}.
\end{proof}

\begin{remark}
\label{r:equalities}
The inequalities stated in \cref{th:lambda,th:embdim-limit} are both
equalities whenever the map $K' \to K$
in the proof of \cref{th:lambda} is an isomorphism. This clearly the case
if $L_m$ is separable over $L_n$, as in this case $K'' = 0$, and therefore we get equalities in both formulas
if $k$ has characteristic zero or $\alpha$ is a $k$-rational point of $X_\infty$. 
More interestingly, we will see later in \cref{s:stable-points}
that the condition that $L_m$ is separable over $L_n$ is always guaranteed
if $X$ is a variety and $\alpha$ is a stable point of $X_\infty$, and hence we 
will deduce that equalities hold in this case, too. 
It will turn out in the end (see \cref{th:embdim-limit:equality}) that in fact
the equality holds in general, and the limsup is a limit, in the formula stated in \cref{th:embdim-limit}.
\end{remark}

\section{The birational transformation rule}

\label{s:BTR}
Here we study how birational morphisms affect the embedding dimension of arcs.

Given two schemes $X$ and $Y$
of finite type over a perfect field $k$, we say that a morphism
$f \colon Y \to X$ is \emph{birational over a union of components} if there
exist a dense open set $V \subset Y$ and a (not necessarily dense) open set $U
\subset X$ such that $f(V) \subset U$ and the restriction $f|_V \colon V \to U$
is an isomorphism. If $U$ is dense in $X$, or equivalently if $f$ is dominant,
then we say that $f$ is \emph{birational}. 

If $f \colon Y \to X$ is birational over a union of components, then the sheaf
of relative differentials $\Omega_{Y/X}$ is torsion. We consider the \emph{Jacobian ideal} 
$\Jac_f := \Fitt^0(\Omega_{Y/X})$ of $f$. 

\begin{lemma}
\label{th:same-residue}
Let $X$ and $Y$ be schemes of finite type over a perfect field, and
consider a proper map $f \colon Y \to X$ that is birational over a union of
components. Let $\beta \in Y_\infty$ and consider $\alpha = f_\infty(\beta) \in
X_\infty$. If $f$ is locally an isomorphism at the generic point $\beta(\eta)$ of the arc
(that is, one can pick $V \subset Y$ as above such that $\beta(\eta) \in V$),
then the residue fields of $\alpha$ and $\beta$ are equal.
\end{lemma}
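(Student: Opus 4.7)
The plan is to exploit the properness and birationality of $f$ to show that, under the hypothesis, $\beta$ is determined by $\alpha$ up to its residue field. Writing $L := k(\beta)$ and $L' := k(\alpha)$, the morphism $f_\infty$ induces a field inclusion $L' \hookrightarrow L$, and the goal is to prove this is surjective. I would represent $\beta$ as a morphism $\Spec L\llbracket t\rrbracket \to Y$ and $\alpha$ as $\Spec L'\llbracket t\rrbracket \to X$, and note that the hypothesis on $\beta$ means the generic point of $\Spec L\llbracket t\rrbracket$ maps into the open subset $V \subset Y$ on which $f$ restricts to an isomorphism $f|_V \colon V \xrightarrow{\sim} U$.

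The first step would be to lift $\alpha$ itself to an arc on $Y$. Since $\beta(\eta) \in V$, the generic point of $\Spec L'\llbracket t\rrbracket$ maps into $U$ under $\alpha$. Composing the restriction $\Spec L'(\!( t )\!) \to U$ with $(f|_V)^{-1}$ yields $\Spec L'(\!( t )\!) \to V \hookrightarrow Y$, and the valuative criterion of properness applied to the proper morphism $f$ and the DVR $L'\llbracket t\rrbracket$ produces a unique extension $\tilde\beta \colon \Spec L'\llbracket t\rrbracket \to Y$ with $f \circ \tilde\beta = \alpha$. Let $\tilde\beta_0 \in Y_\infty$ denote the corresponding point. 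Because $f_\infty(\tilde\beta_0) = \alpha$ and the morphism $\Spec L' \to X_\infty$ defining $\alpha$ is the canonical one, the residue-field injections $L' = k(\alpha) \hookrightarrow k(\tilde\beta_0) \hookrightarrow L'$ compose to the identity of $L'$, so $k(\tilde\beta_0) = L'$.

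Next I would identify $\beta$ with $\tilde\beta_0$. Base-changing $\tilde\beta$ along $L' \hookrightarrow L$ yields $\tilde\beta' \colon \Spec L\llbracket t\rrbracket \to Y$, which, viewed as a morphism $\Spec L \to Y_\infty$, factors through $\tilde\beta_0$. Both $\beta$ and $\tilde\beta'$ project under $f$ to the base change of $\alpha$ to $L$, and both send the generic point of $\Spec L\llbracket t\rrbracket$ into $V$; since $f|_V$ is an isomorphism they must agree on $\Spec L(\!( t )\!)$. A second application of the valuative criterion to $f$ (now its uniqueness part) then forces $\beta = \tilde\beta'$ as morphisms $\Spec L\llbracket t\rrbracket \to Y$, so $\beta$ and $\tilde\beta_0$ coincide as points of $Y_\infty$, giving $L = k(\beta) = k(\tilde\beta_0) = L'$.

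The hard part will be the careful bookkeeping of fields: one must distinguish the ambient field over which an arc is presented (here $L$ or $L'$) from the actual residue field at the image point in the arc space, and verify that the successive injections identified in the second step really do compose to the identity. Once this is clear, the geometric content boils down to two uses of the valuative criterion for the proper map $f$—\emph{existence} to construct $\tilde\beta$, and \emph{uniqueness} to identify $\beta$ with its base-changed lift.
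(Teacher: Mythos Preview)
Your argument is correct and follows essentially the same route as the paper: lift $\alpha$ to $Y$ via the valuative criterion of properness, then identify the lifted arc with $\beta$. The paper condenses your second step into the phrase ``whose image is $\beta$ by construction,'' whereas you spell out the uniqueness argument (base change to $L$ and apply the uniqueness clause of the valuative criterion) and also justify $k(\tilde\beta_0)=L'$ explicitly; these are exactly the details underlying the paper's terse claim.
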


\begin{proof}
Let $L$ and $K$ be the residue fields of $\alpha$ and $\beta$, respectively.
Since $\alpha = f_\infty(\beta)$, we have $L \subset K$. Consider $\alpha$ as
map $\alpha \colon \Spec L\llbracket t\rrbracket \to X$. The hypothesis on
$\beta$ guarantees that the generic point $\alpha(\eta)$ of $\alpha$ lies in
the locus over which $f$ is an isomorphism, and therefore it can be lifted. The
valuative criterion of properness gives a unique lift of $\alpha$ to an arc
$\widetilde\alpha \colon \Spec L\llbracket t \rrbracket \to Y$. This
corresponds to a morphism $\Spec L \to Y_\infty$ whose image is $\beta$ by
construction. This implies that $K \subset L$, as required.
\end{proof}

\begin{theorem}
\label{th:BTR}
Let $X$ and $Y$ be schemes of finite type over a perfect field.
Consider a proper map $f \colon Y \to X$ that is birational over a union of
components. Let $\beta \in Y_\infty$ and consider $\alpha = f_\infty(\beta) \in
X_\infty$. Assume that $Y$ is smooth at $\beta(0)$. Then
\[
    \embdim\left(\mathcal O_{X_\infty,\alpha}\right)
    =
    \embdim\left(\mathcal O_{Y_\infty,\beta}\right)
    +
    \ord_{\beta}(\Jac_f).
\]
\end{theorem}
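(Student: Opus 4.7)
The plan is to reduce the equality to a numerical identity on truncations via \cref{th:embdim-limit}, and then to establish that identity.

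First, dispose of the degenerate case $\ord_\beta(\Jac_f) = \infty$: then $\beta$ lies entirely in the exceptional locus $E \subseteq Y$, and since $f$ is proper and restricts to an isomorphism on a dense open of $Y$, the image $f(E)$ is a closed subscheme of $X$ of dimension strictly less than $\dim_{\alpha(0)} X$. The arc $\alpha$ factors through $f(E)$, so \cref{th:embdim=cylcodim}(3) gives $\embdim(\mathcal O_{X_\infty,\alpha}) = \infty$, matching the right-hand side. From now on assume $\ord_\beta(\Jac_f) < \infty$; then \cref{th:same-residue} provides a common residue field $L^\alpha = L^\beta = L$. The smoothness of $Y$ at $\beta(0)$ propagates to an open neighborhood which, since $\beta(0)$ specializes $\beta(\eta)$, contains the image of $\beta$ entirely; hence $\beta(\eta)$ lies in the isomorphism locus and $\alpha(\eta)$ is a smooth point of $X$. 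In particular $d(\alpha,\Omega_{X/k}) = d(\beta,\Omega_{Y/k}) = d$, the common local dimension.

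Applying \cref{th:embdim-limit} to both $X$ and $Y$ and subtracting yields
\[
    \embdim(\mathcal O_{X_\infty,\alpha}) - \embdim(\mathcal O_{Y_\infty,\beta}) = \lim_{n\to\infty}\bigl(\dim(\beta_n) - \dim(\alpha_n)\bigr).
\]
So the theorem reduces to the identity $\dim(\beta_n) - \dim(\alpha_n) = \ord_\beta(\Jac_f)$ for $n$ large. Equivalently, by \cref{th:emb-dim-jets-gen} applied to both schemes (and using $\ord_\beta(\Jac_Y)=0$), it suffices to verify
\[
    \embdim(\mathcal O_{X_n,\alpha_n}) - \embdim(\mathcal O_{Y_n,\beta_n}) = \ord_\alpha(\Jac_X) + \ord_\beta(\Jac_f)
\]
for all $n \gg 0$.

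To obtain this identity I would work on $\Spec L\llbracket t\rrbracket$ with the fundamental exact sequence
\[
    \alpha^*\Omega_{X/k} \to \beta^*\Omega_{Y/k} \to \beta^*\Omega_{Y/X} \to 0.
\]
Since $Y$ is smooth at $\beta(0)$, $\beta^*\Omega_{Y/k}$ is free of rank $d$; the cokernel $\beta^*\Omega_{Y/X}$ is torsion whose invariant factors sum to $\ord_\beta(\Jac_f)$. Combined with the invariant-factor description of $\alpha^*\Omega_{X/k}$ (free part of rank $d$, torsion of total length $\ord_\alpha(\Jac_X)$), the sequence forces a precise relationship between the two pullbacks, which through \cref{th:differentials,th:fitting,th:fiber-dim-gen} translates into the required identity on fibers of $\Omega_{X_n/k}$ and $\Omega_{Y_n/k}$, and thence on embedding dimensions of jet schemes.

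The main obstacle is promoting this differential-theoretic identification into the strict equality $\trdeg(L_n^\beta/L_n^\alpha) = \ord_\beta(\Jac_f)$: the linear-algebra output naturally yields an upper bound on $\dim(\beta_n) - \dim(\alpha_n)$, and the matching lower bound requires that $\beta_n$ be ``maximally generic'' in its fiber of $f_n$ over $\alpha_n$ along the liftable locus. This is where one invokes the structural lemma of \cite{DL99,EM09} on the truncation maps between liftable jets under the smoothness hypothesis, which is precisely the auxiliary input referenced in the introduction.
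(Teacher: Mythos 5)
Your reduction via \cref{th:embdim-limit} is sound as far as it goes: granting $\ord_\beta(\Jac_f)<\infty$ and the equality of residue fields and Betti numbers, the theorem is indeed equivalent to $\lim_n\big(\dim(\beta_n)-\dim(\alpha_n)\big)=\ord_\beta(\Jac_f)$, i.e.\ to the jet-codimension form of the statement. The problem is that you never prove this identity. Your middle paragraph compares the fibers of $\Omega_{X_n/k}$ at $\alpha_n$ and of $\Omega_{Y_n/k}$ at $\beta_n$; but by \cref{th:fiber-dim-gen} those dimensions are $(n+1)d+\ord_\alpha(\Jac_X)$ and $(n+1)d$ regardless of any relation between $\alpha$ and $\beta$, and \cref{th:emb-dim-jets-gen} converts them into embedding dimensions only after subtracting $\dim(\alpha_n)$, resp.\ $\dim(\beta_n)$ --- which is exactly the quantity you are trying to control. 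So the ``linear-algebra output'' on $\Omega$-fibers cannot by itself say anything about $\trdeg(L_n^\beta/L_n^\alpha)$, and you acknowledge this yourself by calling the lower bound ``the main obstacle.'' (A smaller slip: smoothness of $Y$ near $\beta(0)$ does not imply $\beta(\eta)$ lies in the isomorphism locus --- that follows only from $\ord_\beta(\Jac_f)<\infty$.)

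The lemma you invoke to close the gap does not do the job. The structural lemma from \cite{DL99,EM09} used in this paper (\cref{th:DL}) concerns the truncation maps $\pi_{n+1}(\Cont^e(\Jac_X))\to\pi_n(\Cont^e(\Jac_X))$ inside a single arc space; it says nothing about the fibers of $f_n\colon Y_n\to X_n$, which is what governs $\trdeg(L_n^\beta/L_n^\alpha)$. What you would actually need is the Denef--Loeser change-of-variables key lemma (fibers of $f_n$ over liftable jets are affine spaces of dimension $\ord_\beta(\Jac_f)$), which is essentially the statement being proved and is precisely what this paper's argument is designed to avoid. The paper instead works entirely at the infinite level: by \cref{th:differentials}, the map $\varphi\colon\Omega_{X_\infty/k}\otimes L\to\Omega_{Y_\infty/k}\otimes L$ is obtained from $\alpha^*\Omega_{X/k}\to\beta^*\Omega_{Y/k}$ by tensoring over $B_L=L\llbracket t\rrbracket$ with the divisible module $P_L=L(\!(t)\!)/tL\llbracket t\rrbracket$; smoothness of $Y$ at $\beta(0)$ makes $\beta^*\Omega_{Y/k}$ free, whence $\coker\varphi=0$ and $\ker\varphi=\operatorname{Tor}_1^{B_L}(\Omega_{Y/X}\otimes_{\mathcal O_Y} B_L,\,P_L)=\Omega_{Y/X}\otimes_{\mathcal O_Y} B_L$, of length exactly $\ord_\beta(\Jac_f)$; the conormal sequences for $I/I^2$ and $J/J^2$ then give the equality directly, with no passage through truncations. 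To repair your proposal you would either have to import the full Denef--Loeser key lemma (turning the argument into a deduction from the classical transformation rule) or carry out a computation of this kind.
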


Notice that several of these numbers could be infinite. For example, if $\beta$
has infinite embedding dimension, the \lcnamecref{th:BTR} implies that $\alpha$
also has infinite embedding dimension. Conversely, if $\alpha$ has infinite
embedding dimension and $\beta$ is not completely contained in the 
vanishing locus of $\Jac_f$, then $\beta$ has infinite embedding dimension.

\begin{proof}[Proof of \cref{th:BTR}]
Let $V \subset Y$ be a dense open subset as above, so that $f$ restricts
to an isomorphism from $V$ to an open set $U \subset X$.
If $\beta$ is contained in $Z := Y \setminus V$, then $\alpha$ is contained in
the image of $Z$, and hence both embedding dimensions are
infinite by \cref{th:embdim=infty}. Thus we assume that $\beta$
is not contained in $Z$, which means that $\beta(\eta) \in V$. By \cref{th:same-residue} both
$\alpha$ and $\beta$ have the same residue field, which we call $L$. Let $I$
and $J$ be the ideals defining $\alpha$ and $\beta$. Since the ground field is
perfect, we have the following diagram:
\begin{equation}
    \label{eq:BTR-main}
    \begin{gathered}
    \xymatrix{
        0 \ar[r]
        & I/I^2 \ar[r] \ar[d]
        & \Omega_{X_\infty/k} \otimes_{\mathcal O_{X_\infty}} L \ar[r] \ar[d]^{\varphi}
        & \Omega_{L/k} \ar[r] \ar@{=}[d]
        & 0 \\
        0 \ar[r]
        & J/J^2 \ar[r]
        & \Omega_{Y_\infty/k} \otimes_{\mathcal O_{Y_\infty}} L \ar[r]
        & \Omega_{L/k} \ar[r]
        & 0 \\
    }
    \end{gathered}
\end{equation}
Therefore, the \lcnamecref{th:BTR} will follow if we show that
\[
    \dim_L(\ker \varphi) - \dim_L(\coker \varphi) = \ord_\beta(\Jac_f).
\]

Recall the universal arc $\rho_\infty \colon U_\infty \to X_\infty$ and the
sheaf $\mathcal P_\infty$ on $U_\infty$ defined in \cref{s:sheaf-b}. 
We rely on \cref{r:computing-fibers} for the computation 
of the fibers appearing in the middle column of \cref{eq:BTR-main}.
For ease of notation, we denote
\[
    B_L 
    := L\llbracket t \rrbracket 
    = \rho_{\infty*}(\mathcal O_{U_\infty}) \,\hat\otimes_{\mathcal O_{X_\infty}} L
\]
and
\[
    P_L :
    = L(\!(t)\!)/t L\llbracket t\rrbracket
    = \rho_{\infty*}(\mathcal P_\infty) \otimes_{\mathcal O_{X_\infty}} L.
\]
We can regard $B_L$ both as an $\mathcal O_X$-algebra via the arc $\alpha$ and
as an $\mathcal O_Y$-algebra via $\beta$. Then $P_L$, which is naturally a
$B_L$-module, becomes both an $\mathcal O_X$-module and an $\mathcal
O_Y$-module.

The map $f$ induces a natural sequence of sheaves of differentials:
\[
    \xymatrix{
    \Omega_{X/k} \otimes_{\mathcal O_X} \mathcal O_Y
    \ar[r]^-{f^*}
    & \Omega_{Y/k}
    \ar[r]
    & \Omega_{Y/X}
    \ar[r]
    & 0.
    }
\]
After pulling back to the arcs $\alpha$ and $\beta$ we get:
\[
    \xymatrix{
    \Omega_{X/k} \otimes_{\mathcal O_X} B_L
    \ar[r]^-{\psi}
    & \Omega_{Y/k} \otimes_{\mathcal O_Y} B_L
    \ar[r]
    & \Omega_{Y/X} \otimes_{\mathcal O_Y} B_L
    \ar[r]
    & 0.
    }
\]
All of the terms in this sequence are finitely generated modules over $B_L =
L\llbracket t \rrbracket$, and therefore they are direct sums of cyclic
modules. 
Since $Y$ is smooth at $\beta(0)$, the middle term $F_Y := \Omega_{Y/k}
\otimes_{\mathcal O_Y} B_L$ is free. Write $\Omega_{X/k} \otimes_{\mathcal O_X}
B_L = F_X \oplus T_X$, where $F_X$ is free and $T_X$ is torsion. 
Since $f$ is an isomorphism at the generic point of $\beta$,
the restriction $\overline\psi = \psi|_{F_X}$ is injective. Consider $Q_{Y/X} =
\coker(\overline\psi)$.
We have an exact sequence:
\begin{equation}
    \label{eq:BTR-Q}
    \xymatrix{
    0 \ar[r]
    & F_X
    \ar[r]^-{\overline\psi}
    & F_Y
    \ar[r]
    & Q_{Y/X}
    \ar[r]
    & 0.
    }
\end{equation}
Notice that
$\psi(T_X)
= 0$, and therefore $Q_{Y/X} = \Omega_{Y/X}\otimes_{\mathcal O_Y} B_L$.

\Cref{th:differentials} says that $\varphi$ is obtained from $\psi$ by
tensoring with $P_L$:
\begin{equation}
    \label{eq:BTR-phi}
    \begin{gathered}
    \xymatrix@C-=0.6cm{
    0
    \ar[r]
    & K 
    \ar[r]
    & \Omega_{X/k} \otimes_{\mathcal O_X} P_L
    \ar[r]^-{\varphi} \ar@{=}[d]
    & \Omega_{Y/k} \otimes_{\mathcal O_Y} P_L
    \ar[r] \ar@{=}[d]
    & \Omega_{Y/X} \otimes_{\mathcal O_Y} P_L
    \ar[r]
    & 0
    \\
    && \Omega_{X_\infty/k} \otimes_{\mathcal O_{X_\infty}} L
    & \Omega_{Y_\infty/k} \otimes_{\mathcal O_{Y_\infty}} L
    }
    \end{gathered}
\end{equation}

Notice that $P_L$ is a divisible $B_L$-module, and hence tensoring with $P_L$
kills torsion. We get the following diagram:
\begin{equation}
    \label{eq:BTR-phi-bar}
    \begin{gathered}
    \xymatrix@C-=0.6cm{
    0
    \ar[r]
    & K 
    \ar[r] \ar@{=}[d]
    & F_X \otimes_{B_L} P_L
    \ar[r]^-{\overline\varphi} \ar@{=}[d]
    & F_Y \otimes_{B_L} P_L
    \ar[r] \ar@{=}[d]
    & Q_{Y/X} \otimes_{B_L} P_L
    \ar[r] \ar@{=}[d]
    & 0
    \\
    0
    \ar[r]
    & K 
    \ar[r]
    & \Omega_{X/k} \otimes_{\mathcal O_X} P_L
    \ar[r]^-{\varphi}
    & \Omega_{Y/k} \otimes_{\mathcal O_Y} P_L
    \ar[r]
    & \Omega_{Y/X} \otimes_{\mathcal O_Y} P_L
    \ar[r]
    & 0,
    }
    \end{gathered}
\end{equation}
where $\overline\varphi$ is induced by $\overline\psi$.

Since $Q_{Y/X} = \Omega_{Y/X} \otimes_{\mathcal O_Y} B_L$ is torsion (because
$\beta$ is not contained in the exceptional locus), we see that $Q_{Y/X}
\otimes_{B_L} P_L = 0$. Moreover, $K = \operatorname{Tor}_1^{B_L}(Q_{Y/X}, P_L)
= Q_{Y/X}$. Therefore $\dim_L(K) = \dim_L(Q_{Y/X}) = \ord_\beta(\Jac_f)$, and
the result follows.

Alternatively, we can check directly that $\dim_L(K) = \ord_\beta(\Jac_f)$.
To do this, notice that, after appropriate choices of bases, the map
$\overline\psi$ can be represented by a matrix with entries $t^{e_0}, t^{e_1},
\ldots$ along the main diagonal an zeroes elsewhere. In the language of
\cref{s:fitting}, the $e_i$ can be chosen to be the invariant factors of the
module $\Omega_{Y/X}$ with respect to the arc $\beta$. In particular, we have
that $\ord_\beta(\Jac_f) = \sum_{i \geq 0} e_i$. Since $\beta$ is not contained
in the exceptional locus, we have $e_i < \infty$ for all $i$.
The map $\overline\varphi$ is represented by the same matrix as
$\overline\psi$. We get that $K = \bigoplus_{i \geq 0} K_i$, where $K_i$ is the
kernel of the map $P_L \to P_L$ given by multiplication by $t^{e_i}$. An easy
computation shows that $\dim_L(K_i) = e_i$, and therefore $\dim_L(K) = \sum_{i \geq
0} e_i = \ord_\beta(\Jac_f)$.
\end{proof}

\begin{theorem}
\label{th:BTR-gen}
Let $X$ and $Y$ be schemes of finite type over a perfect field.
Consider a proper map $f \colon Y \to X$ that is birational over a union of
components. Let $\beta \in Y_\infty$ and consider $\alpha = f_\infty(\beta) \in
X_\infty$. Then
\[
    \embdim\left(\mathcal O_{Y_\infty,\beta}\right)
    \leq
    \embdim\left(\mathcal O_{X_\infty,\alpha}\right)
    \leq
    \embdim\left(\mathcal O_{Y_\infty,\beta}\right)
    + \ord_{\beta}(\Jac_f).
\]
\end{theorem}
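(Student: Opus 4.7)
The plan is to extend the proof of \cref{th:BTR} by dropping the smoothness hypothesis, splitting into two cases according to whether $\beta$ is completely contained in the exceptional locus of $f$.

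If $\beta$ is completely contained in the exceptional locus, then $\ord_\beta(\Jac_f) = \infty$ and the upper bound is vacuous. For the lower bound, I would argue that both embedding dimensions are infinite. Since the dense open $V \subset Y$ meets every irreducible component, the exceptional locus $Y \setminus V$ has dimension strictly less than $\dim Y$ at every point, so $\embdim(\mathcal O_{Y_\infty,\beta}) = \infty$ by \cref{th:embdim=cylcodim}(3). For $X$, let $Y'$ be the irreducible component of $Y$ through $\beta(\eta)$; the map $f|_{Y'}$ is birational onto the closed irreducible subset $f(Y') \subset X$, so $\dim Y' = \dim f(Y')$. If $\dim_{\alpha(\eta)} f(Y') < \dim_{\alpha(\eta)} X$, then \cref{th:embdim=cylcodim}(3) applied to $f(Y')$ gives $\embdim(\mathcal O_{X_\infty,\alpha}) = \infty$. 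Otherwise $\dim f(Y') = \dim_{\alpha(\eta)} X$, and since $\beta(\eta) \in Y' \setminus V$ we have $\alpha(\eta) \in f(Y' \setminus V)$ with $\dim f(Y' \setminus V) \le \dim(Y' \setminus V) < \dim Y'$, so \cref{th:embdim=cylcodim}(3) applies to the closed subset $f(Y' \setminus V) \subset X$ (closed by properness).

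In the main case, where $\beta(\eta) \in V$, I would reconstruct the diagram \eqref{eq:BTR-main} and its snake lemma, which reduces the theorem to the estimate $0 \le \dim_L \ker\varphi - \dim_L \coker\varphi \le \ord_\beta(\Jac_f)$. Without smoothness at $\beta(0)$, the module $\Omega_{Y/k} \otimes_{\mathcal O_Y} B_L$ is no longer free, so I would decompose both $\Omega_{X/k} \otimes_{\mathcal O_X} B_L = F_X \oplus T_X$ and $\Omega_{Y/k} \otimes_{\mathcal O_Y} B_L = F_Y \oplus T_Y$ into free and torsion parts over the discrete valuation ring $B_L = L\llbracket t \rrbracket$. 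Since there are no nonzero $B_L$-linear maps between a free module and a torsion one, the map $\psi$ splits as $\overline\psi \oplus \psi_{TT}$ with $\overline\psi \colon F_X \to F_Y$ and $\psi_{TT} \colon T_X \to T_Y$. The hypothesis $\beta(\eta) \in V$ means $f$ is an isomorphism at $\beta(\eta)$, so $\overline\psi$ becomes an isomorphism after inverting $t$; in particular $\overline\psi$ is injective and $Q := \coker \overline\psi$ is torsion. The divisibility argument in the proof of \cref{th:BTR} then yields $\coker\varphi = Q \otimes_{B_L} P_L = 0$ and $\ker\varphi = \operatorname{Tor}_1^{B_L}(Q, P_L)$, of $L$-dimension $\sum a_j$ when $Q = \bigoplus B_L/(t^{a_j})$.

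Finally, $\Omega_{Y/X} \otimes B_L = Q \oplus T_Y / \psi_{TT}(T_X)$; writing the second summand as $\bigoplus B_L/(t^{b_k})$ and using base change for Fitting ideals, one obtains $\ord_\beta(\Jac_f) = \sum a_j + \sum b_k \ge \sum a_j$. Together with $\dim_L \coker\varphi = 0$, this gives $0 \le \dim_L \ker\varphi - \dim_L \coker\varphi \le \ord_\beta(\Jac_f)$, completing the required estimate. The main obstacle will be the torsion bookkeeping: verifying cleanly that $\psi$ respects the free/torsion decomposition, that $Q$ is still torsion (and $\overline\psi$ injective) under the weaker hypothesis $\beta(\eta) \in V$, and that the extra invariants $b_k$ coming from $T_Y/\psi_{TT}(T_X)$ contribute to $\ord_\beta(\Jac_f)$ without affecting $\ker\varphi$; once this is in place the argument parallels \cref{th:BTR}.
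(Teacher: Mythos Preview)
Your overall approach parallels the paper's, but one claim is false and breaks the argument as written. You assert that ``there are no nonzero $B_L$-linear maps between a free module and a torsion one'' and conclude that $\psi$ splits as $\overline\psi \oplus \psi_{TT}$. Only half of this is true: over the domain $B_L$ there are no nonzero maps from a torsion module to a free one, so the block $T_X \to F_Y$ vanishes; but maps $F_X \to T_Y$ certainly exist (any quotient $B_L \twoheadrightarrow B_L/(t^a)$, for instance). Thus $\psi$ is only lower triangular with respect to the free/torsion decompositions, with a possibly nonzero component $\psi_{TF} \colon F_X \to T_Y$, and your direct-sum formula $\Omega_{Y/X}\otimes B_L = Q \oplus T_Y/\psi_{TT}(T_X)$ is unjustified as stated.

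The fix is exactly what the paper does: define $\overline\psi$ as the composite $F_X \hookrightarrow \Omega_{X/k}\otimes B_L \xrightarrow{\psi} \Omega_{Y/k}\otimes B_L \twoheadrightarrow F_Y$ (this is your $\overline\psi$, just without the block-diagonal claim) and observe only that $Q := \coker\overline\psi$ is a \emph{quotient} of $\Omega_{Y/X}\otimes B_L$, via the projection $F_Y \oplus T_Y \to F_Y$. This weaker statement already yields $\dim_L K = \dim_L Q \le \dim_L(\Omega_{Y/X}\otimes B_L) = \ord_\beta(\Jac_f)$, which is all you need for the upper bound. Your identification of $\varphi$ with $\overline\psi \otimes P_L$ remains valid without block-diagonality, because tensoring with the divisible module $P_L$ annihilates every block touching $T_X$ or $T_Y$ (including $\psi_{TF}\otimes P_L$), so the lower bound and the rest of your argument go through unchanged once this step is repaired.
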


\begin{proof}
The proof is almost identical to the one of \cref{th:BTR}. 
Using the notation
of that \lcnamecref{th:BTR}, the main difference is that $\Omega_{Y/k}
\otimes_{\mathcal O_Y} B_L$ is no longer a free $B_L$-module.
Write
$\Omega_{Y/k}
\otimes_{\mathcal O_Y} B_L = F_Y \oplus T_Y$, where $F_Y$ is a free
$B_L$-module and $T_Y$ is torsion, and let $\overline\psi$ be the composition
of $\psi|_{F_X}$ with the projection to $F_Y$. Then $\overline\psi$ is still
injective, and we can consider the module $Q_{Y/X}$ given by the sequence in
\cref{eq:BTR-Q}. The diagrams in \cref{eq:BTR-phi,eq:BTR-phi-bar} remain valid.

Notice that $\Omega_{Y/X} \otimes_{\mathcal O_Y} B_L$ is a torsion
$B_L$-module, so $\coker{\varphi} = 0$, and the diagram of \cref{eq:BTR-main}
shows that $ \embdim\left(\mathcal O_{Y_\infty,\beta}\right) \leq
\embdim\left(\mathcal O_{X_\infty,\alpha}\right)$.

The module $Q_{Y/X}$ is a quotient of $\Omega_{Y/X} \otimes_{\mathcal O_Y}
B_L$, and therefore it is also torsion. As in the proof of \cref{th:BTR}, this
implies that:
\[
    \ker(\varphi) 
    = K 
    = \operatorname{Tor}_1^{B_L}(Q_{Y/X}, P_L) 
    = Q_{Y/X}.
\]
Using again that $Q_{Y/X}$ is a quotient of 
$\Omega_{Y/X} \otimes_{\mathcal O_Y} B_L$, we see that
\[
    \dim_L(K) 
    = \dim_L(Q_{Y/X}) 
    \leq
    \dim_L(\Omega_{Y/X} \otimes_{\mathcal O_Y} B_L)
    = \ord_\beta(\Jac_f),
\]
and the result follows.
\end{proof}

\begin{corollary}
\label{th:bijection-finite-embdim}
Let $f \colon Y \to X$ be a proper birational morphism between schemes
of finite type over a perfect field. Then the induced map $f_\infty \colon
Y_\infty \to X_\infty$ induces a bijection
\[
	\{ \beta \in Y_\infty \mid \embdim\left(\mathcal O_{Y_\infty,\beta}\right) < \infty \}
	\xrightarrow{1-1}
	\{ \alpha \in X_\infty \mid \embdim\left(\mathcal O_{X_\infty,\alpha}\right) < \infty \}.
\]
\end{corollary}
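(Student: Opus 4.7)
The plan is to combine the two-sided inequality of the birational transformation rule (\cref{th:BTR-gen}) with the characterization of infinite embedding dimension in \cref{th:embdim=cylcodim}(3) and the unique-lifting argument from the proof of \cref{th:same-residue}. Let $E \subset Y$ denote the exceptional locus of $f$, and fix a dense open $V \subset Y$ such that $f|_V$ is an isomorphism onto a dense open $U \subset X$. Since $V \cap E = \emptyset$ and $V$ is dense in $Y$, one has $\dim_\xi(E) < \dim_\xi(Y)$ at every $\xi \in E$, and similarly $\dim_\xi(X \setminus U) < \dim_\xi(X)$ at every $\xi \in X \setminus U$.

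The first step is to verify that $f_\infty$ sends the finite-embedding-dimension locus of $Y_\infty$ into that of $X_\infty$. If $\embdim(\mathcal O_{Y_\infty,\beta}) < \infty$, then \cref{th:embdim=cylcodim}(3) applied to the closed subset $E$ of $Y$ forces $\beta$ not to be completely contained in $E$, whence $\ord_\beta(\Jac_f) < \infty$; the upper bound in \cref{th:BTR-gen} then yields $\embdim(\mathcal O_{X_\infty, f_\infty(\beta)}) < \infty$.

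For surjectivity, take $\alpha \in X_\infty$ with $\embdim(\mathcal O_{X_\infty, \alpha}) < \infty$. Applying \cref{th:embdim=cylcodim}(3) to $X \setminus U \subset X$ shows that the generic point of $\alpha$ lies in $U$, and the valuative-criterion argument used in the proof of \cref{th:same-residue} then produces a unique $\beta \in Y_\infty$ with $f_\infty(\beta) = \alpha$ and the same residue field as $\alpha$; the lower bound in \cref{th:BTR-gen} automatically gives $\embdim(\mathcal O_{Y_\infty,\beta}) < \infty$. For injectivity, if $\beta_1, \beta_2 \in Y_\infty$ both have finite embedding dimension and both map to $\alpha$, then by the previous step neither is contained in $E$; \cref{th:same-residue} then matches their residue fields with that of $\alpha$, so both represent morphisms $\Spec L\llbracket t \rrbracket \to Y$ lifting $\alpha$, and uniqueness in the valuative criterion (since $f$ is separated) forces $\beta_1 = \beta_2$.

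The only delicate ingredient is verifying the dimension inequalities $\dim_\xi(E) < \dim_\xi(Y)$ and $\dim_\xi(X \setminus U) < \dim_\xi(X)$ that unlock \cref{th:embdim=cylcodim}(3) in each direction; both follow directly from the density of $V$ in $Y$ and of $U$ in $X$. Everything else is a direct packaging of previously proved results.
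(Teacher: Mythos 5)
Your proof is correct and follows essentially the same route as the paper's: \cref{th:BTR-gen} to transfer finiteness of embedding dimension, \cref{th:embdim=cylcodim} to rule out arcs contained in the exceptional or indeterminacy loci, and the valuative criterion (via the argument of \cref{th:same-residue}) for existence and uniqueness of lifts. You are in fact slightly more explicit than the paper, which leaves both the case $\ord_\beta(\Jac_f)=\infty$ and the injectivity of the map implicit.
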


\begin{proof}
\Cref{th:BTR-gen} implies that 
$Y_\infty$ has finite embedding dimension at a point $\beta$
if and only if $X_\infty$ has finite embedding dimension at $f_\infty(\beta)$.
To conclude, it suffices to observe that if $\alpha \in X_\infty$ is
not in the image of $f_\infty$ then $X_\infty$ has infinite embedding dimension at $\alpha$.
Indeed, by the valuative criterion of properness,
$\alpha$ must be fully contained in the indeterminacy locus of $f^{-1} \colon X
\dashrightarrow Y$. Since $f$ is a birational map, the
indeterminacy locus of $f^{-1}$ has dimension strictly smaller than the
dimension of $X$ at any of its points, and therefore we have $\embdim(\mathcal
O_{X_\infty,\alpha}) = \infty$ by \cref{th:embdim=infty}.
\end{proof}

\Cref{th:BTR-gen} also implies the following useful property. 

\begin{corollary}
\label{th:BTR-inclusion}
Let $X$ be a scheme of finite type over a perfect field $k$ and $\alpha \in X_\infty$ an 
arc such that $\alpha \not\in (\Sing X)_\infty$. 
Let $Y \subset X$ be the irreducible
component of $X$ such that $\alpha \in Y_\infty$.
Then
\[
    \embdim\left(\mathcal O_{Y_\infty,\alpha}\right)
    =
    \embdim\left(\mathcal O_{X_\infty,\alpha}\right).
\]
\end{corollary}

\begin{proof}
The first remark, which is implicit in the statement, 
is that for every $\alpha \in X_\infty$ there is 
always an irreducible component $Y$ of $X$ such that $\alpha \in Y_\infty$, and
if $\alpha \not\in (\Sing X)_\infty$ then such component is unique.

If $f \colon Y \to X$ is the natural inclusion, then $\Jac_f = \mathcal O_Y$ and hence the
assertion follows directly from \cref{th:BTR-gen}.
\end{proof}

\cref{th:BTR-inclusion} would follow immediately if we knew that the local rings
$\mathcal O_{X_\infty,\alpha}$ and $\mathcal O_{Y_\infty,\alpha}$ are isomorphic. 
This is however not true in general. 
While the reductions of these rings can be shown to be 
isomorphic, it is not always the case that the rings
themselves are. An example where this fails is given next.

\begin{example}
Let $X = \{xy=0\} \subset \mathbb A^2$ be the union of two lines 
and $Y = \{y=0\} \subset \mathbb A^2$ one of the components, 
and consider the arc $\alpha = (-t,0) \in X_\infty$.
Let $\Gamma = \mathbb Z \oplus \epsilon \mathbb Z$ with the lexicographic order, 
so that, for instance, 
\[
\dots < 0 < \epsilon < 2\epsilon < \dots < 1 - 2\epsilon < 1 - \epsilon < 1 < 1 + \epsilon < \dots,
\]
and let $R \subset k(r,s)$ be the rank 2 valuation ring with value group $\Gamma$
associated to the monomial valuation $v$ defined by $v(r) = \epsilon$
and $v(s) = 1$. 
The map $k[x,y]/(xy) \to (R/(rs))\llbracket t\rrbracket $ defined by 
\[
x \mapsto r - t, \quad
y \mapsto s + \frac s r\, t + \frac s{r^2} \,t^2 + \frac s{r^3}\, t^3 + \dots
\]
does not factor through the quotient $k[x,y]/(xy) \to k[x]$, and
this shows that the corresponding map $\mathcal O_{X_\infty,\alpha} \to R/(rs)$
does not factor through the quotient $\mathcal O_{X_\infty,\alpha}
\to \mathcal O_{Y_\infty,\alpha}$.
\end{example}

\begin{remark}
If $Y$ is a union of components
of a non-reduced scheme $X$, the inclusion $f \colon Y \to X$ is not
necessarily a birational map over a union of components. 
In particular, \cref{th:BTR,th:BTR-gen} may not apply. 
Consider for instance the case where $X$ is nowhere reduced
and $Y = X_\red$. If $f$ is the inclusion then $\Jac_f = \mathcal O_Y$
and hence $\ord_{\alpha}(\Jac_f) = 0$ for every $\alpha \in Y_\infty$. 
As we will see in the next section, we can always find 
an arc $\alpha \in Y_\infty$ with $\embdim\left(\mathcal O_{Y_\infty,\alpha}\right) < \infty$. 
However, since $\Sing X = Y$, for any $\alpha$ we have 
$\embdim\left(\mathcal O_{X_\infty,\alpha}\right) = \infty$ by \cref{th:embdim=infty}.
\end{remark}

\section{Constructible and stable points}

\label{s:stable-points}

Throughout this section, let $k$ be a perfect field and 
$X$ a scheme of finite type over $k$. The goal of this section is  
to characterize local rings $\mathcal O_{X_\infty,\alpha}$ of finite embedding dimension. 
We start by recalling the notion of contructibility. 

Let $Z$ be an arbitrary scheme. The definition of constructible subset of 
$Z$ is here intended in the sense of \cite{EGAiii_1} 
(see also \cite[\href{https://stacks.math.columbia.edu/tag/005G}{Tag~005G}]{SP}).
That is, a subset of $Z$ is \emph{constructible} if and only if it is a finite union of
finite intersections of retrocompact open sets and their complements, where a
subset $W \subset Z$ is said to be \emph{retrocompact} if for every
quasi-compact open set $U \subset Z$, the intersection $W \cap U$ is quasi-compact. 
A constructible set is \emph{irreducible} if it contains a unique \emph{generic point}, 
namely, a point whose closure in $Z$ contains the set. 
With small abuse of terminology, we say that a point $z \in Z$ is 
\emph{constructible} if $z$ is the
generic point of an irreducible constructible subset of $Z$.
Note that the fact that a point $z \in Z$ is constructible in this sense
does not mean necessarily that the 
1-point subset $\{z\}$, or its closure, are constructible subsets of $Z$. 

%
%

The above definition gives a notion of what it means for a point $\alpha \in X_\infty$
to be a constructible point. 
It is a general fact that a subset of $X_\infty$ is
constructible if and only if it is the (reduced) inverse image of a
constructible subset of $X_n$ for some finite $n$
\cite[Th\'eor\`eme~(8.3.11)]{EGAiv_3}. 

\begin{lemma}
\label{t:stable-point-irred-comp}
Let $X$ be a scheme of finite type and $\alpha \in X_\infty$ an 
arc such that $\alpha \not\in (\Sing X)_\infty$. 
Let $Y \subset X$ be the irreducible
component of $X$ such that $\alpha \in Y_\infty$.
Then
$\alpha$ is a constructible point of $X_\infty$ if and only if it 
is a constructible point of $Y_\infty$.
\end{lemma}

\begin{proof}
Suppose first that $\alpha$ is a constructible point of $X_\infty$. 
Then $\alpha$ is the generic point of an irreducible constructible subset $C \subset X_\infty$.
Note that $C \cap Y_\infty$ is a constructible subset of $Y_\infty$, 
and since $\alpha \in Y_\infty$ and $Y_\infty$ is closed in $X_\infty$, 
we actually have $C \subset Y_\infty$.
It follows that $\alpha$ is a constructible point of $Y_\infty$.

Conversely, suppose that $\alpha$ is a constructible point of $Y_\infty$, 
and let $D \subset Y_\infty$ be an irreducible constructible subset
with generic point $\alpha$.
We can find an integer $n$ such that $D = (\pi_n|_{Y_\infty})^{-1}(D_n)$
where $D_n$ is a constructible subset of $Y_n$. 
Since $X_n$ is a scheme of finite type and $Y_n$ is a closed subscheme of $X_n$, 
the set $D_n$ is also constructible in $X_n$. 
Since $\alpha \not\in (\Sing X)_\infty$, 
we can find a closed subscheme 
$Z \subset X$ such that $X = Y \cup Z$ and $\alpha \not\in Z_\infty$.
This means that $\alpha$ has finite order of contact with $Z$. 
If $m$ is an integer larger than this order, then 
$\alpha \not\in \pi_m^{-1}(Z_m)$.
Then $\pi_n^{-1}(D_n) \setminus \pi_m^{-1}(Z_m)$
is a constructible subset of $X_\infty$
with generic point $\alpha$, and
hence $\alpha$ is a constructible point of $X_\infty$. 
\end{proof}

When $X$ is a variety, constructible subsets of $X_\infty$ 
are called \emph{weakly stable semi-algebraic sets} in \cite{DL99} and
\emph{cylinders} in \cite{ELM04,Ish08,dFEI08,EM09} among other places.
Related notions are those of \emph{generically stable set} introduced in \cite{Reg06}
and \emph{quasi-cylinder} introduced in \cite{dFEI08}.
The more restrictive notion of \emph{stable semi-algebraic set}
was introduced in \cite{DL99}. We recall this notion next. 

Following the terminology of \cite{DL99} (see also \cite{EM09}), 
a morphism $g \colon V' \to V$ of schemes of finite type is said to 
induce a \emph{piecewise trivial fibration} $W' \to W$ with fiber $F$,
where $W' \subset V'$ and $W \subset V$ are constructible subsets and $F$
is a reduced scheme, if there is a decomposition $W = T_1 \sqcup \dots \sqcup T_r$, 
with all $T_i$ locally closed subsets of $W$ such that each 
$W' \cap g^{-1}(T_i)$ is locally closed in $V'$ and, with the reduced scheme structure, 
it is isomorphic to $T_i \times F$. 

Let $X$ be a variety. A subset $W \subset X_\infty$ is said to be a \emph{stable semi-algebraic subset}
if for all $n \gg 1$ the truncation $\pi_{n+1}(X_\infty) \to \pi_n(X_\infty)$
induce a piecewise trivial fibration over $\pi_n(W)$ with fiber $\mathbb A^{\dim(X)}$. 
The term \emph{stable point} was coined in \cite{Reg09} to denote the generic point of
an irreducible stable semi-algebraic subset of $X_\infty$.
As explained in \cite[(2.7)]{DL99}, a simple consequence of \cite[Lemma~4.1]{DL99}
is that a point $\alpha \in X_\infty$ is stable if and only if it is constructible 
and is not contained in $(\Sing X)_\infty$.
Note that \cite[Lemma~4.1]{DL99} is stated in characteristic zero, but the proof
works over an arbitrary perfect field (cf.\
\cite[Proposition~4.1]{EM09} where the property is proved over algebraically closed fields
of arbitrary characteristic).

\begin{remark}
The formulation of \cite[3.1(i)(c)]{Reg09} is equivalent
to asking that the point is the generic point of a generically stable subset of $X_\infty$
as defined in \cite{Reg06}. It seems that 
the condition that the generic point of 
a generically stable subset be not contained in $(\Sing X)_\infty$
was overlooked in \cite{Reg06,Reg09}. 
An easy and well-known computation of the fiber over the 1-jet $(t,0,0)$ 
in the arc space of the Whitney umbrella $X = \{xy^2 = z^2\} \subset \mathbb A^3$
shows that such condition is necessary.
\end{remark}

\begin{remark}
\label{th:stable-separable-extensions}
Let $X$ be a variety and $\alpha \in X_\infty$ a stable point.
For every $n \in \mathbb N$, let $\alpha_n = \pi_n(\alpha)$ be the truncation of $\alpha$
and $L_n$ its residue field. Then it follows immediately from the definition
that for all $n \gg 1$ the field extension 
$L_{n+1} \subset L_n$ is purely transcendental of degree $\dim(X)$
(cf.\ \cite[\S 3.1(i)(b)]{Reg09}). 
\end{remark}

\begin{remark}
The reason for introducing the term \emph{constructible point} in the context of arc spaces is that it
is not completely clear what the definition of stable point of $X_\infty$ should be if
$X$ is a nonreduced scheme. The point is that the difference between weakly stable and stable
when $X$ is a variety can be characterized in two equivalent ways: either by 
imposing the condition that $\alpha$ is not contained in $(\Sing X)_\infty$, 
or by requiring the piecewise trivial fibration condition with fiber $\mathbb A^{\dim(X)}$. 
If $X$ is not generically reduced, these two conditions are no longer equivalent. 
\end{remark}

Implicit in the works on motivic integration is the definition of codimension 
of a constructible subset of the arc space of a smooth variety. 
This was formalized and extended to singular varieties in 
\cite{ELM04,dFEI08,dFM15}.
Let $X$ be a variety and $\alpha \in X_\infty$, and for any $n \in \mathbb N$ let
$\alpha_n = \pi_n(\alpha)$. 
The \emph{jet codimension} of $\alpha$ in $X_\infty$ is defined to be
\[
    \jetcodim(\alpha, X_\infty) := \lim_{n \to \infty} \big((n+1)\dim(X) - \dim(\alpha_n)\big).
\]
The fact that the limit exists is an easy application of \cite[Lemma~4.1]{DL99}
(e.g., see \cite[Lemma~4.13]{dFM15}). 
Note also that $\jetcodim(\alpha, X_\infty) \ge 0$ for every $\alpha \in X_\infty$
(cf.\ \cref{th:dim-a_n}).

The next property, which is well-known to experts,
formalizes the relationship between stable points and jet codimension.

\begin{proposition}
\label{th:stable-points}
Let $X$ be a variety and $\alpha \in X_\infty$. 
Then $\alpha$ is a stable point if and only if $\jetcodim(\alpha, X_\infty) < \infty$.
\end{proposition}

\begin{proof}
If $\alpha \not\in(\Sing X)_\infty$ then this follows easily from \cite[Lemma~4.1]{DL99}. 
If $\alpha \in(\Sing X)_\infty$, then, since the ground 
field is perfect, the closure $Z \subset X$ of the generic point
$\alpha(\eta)$ of $\alpha$ has dimension $\dim(Z) < \dim(X)$ and
the same argument applied to $Z$ implies that $\jetcodim(\alpha, X_\infty) = \infty$.
\end{proof}

The following more precise version of \cref{th:embdim-limit} holds on varieties.

\begin{lemma}
\label{th:embdim-limit:equality}
Let $X$ be a variety, $\alpha \in X_\infty$ any point, and
$d$ the Betti number of $\Omega_{X/k}$ with respect to $\alpha$. 
Then
\[
	\embdim(\mathcal O_{X_\infty, \alpha}) =
        \lim_{n \to \infty} \big((n+1)d - \dim(\alpha_n)\big).
\]
\end{lemma}

\begin{proof}
Since the right hand side of the equation is bounded below by 
the jet codimension, it follows by \cref{th:embdim-limit,th:dim-a_n,th:stable-points}
that both terms of the equation are infinite unless $\alpha$ is a stable point.
If $\alpha$ is a stable point, then \cref{th:stable-separable-extensions} implies that 
for $m \ge n \gg 1$ the field extensions $L_n \subset L_m$ are separable, and therefore
equality holds by \cref{r:equalities}.
\end{proof}

\begin{theorem}
\label{th:embdim=cylcodim}
Let $X$ be a variety over a perfect field $k$. Then for every
$\alpha \in X_\infty$ we have
\[
    \embdim(\mathcal O_{X_\infty, \alpha}) =
    \jetcodim(\alpha, X_\infty).
\]
\end{theorem}

\begin{proof}
If $\alpha \in (\Sing X)_\infty$, then both sides of the equation are
infinite by \cref{th:embdim=infty,th:stable-points}. 
Assume then that $\alpha \not\in (\Sing X)_\infty$. 
By \cref{th:embdim-limit} and the fact that $d(\alpha,\Omega_{X/k}) = \dim(X)$, we have
\begin{align*}
	\embdim(\mathcal O_{X_\infty, \alpha}) 
	&\ge \lim_{n \to \infty} \big((n+1)\dim(X) - \dim(\alpha_n)\big) \\
	&= \jetcodim(\alpha, X_\infty).
\end{align*}
If $\alpha$ is stable then the inequality in the first step of this formula
is an equality by \cref{th:embdim-limit:equality}, and hence, combining the formula with
\cref{th:stable-points}, we get
\[
    \embdim(\mathcal O_{X_\infty, \alpha}) =
    \jetcodim(\alpha, X_\infty) < \infty.
\]
If $\alpha$ is not stable, then we have $\jetcodim(\alpha, X_\infty) = \infty$ by 
\cref{th:stable-points}, and we conclude that
$\embdim(\mathcal O_{X_\infty, \alpha}) = \jetcodim(\alpha, X_\infty) = \infty$.
\end{proof}

We obtain the following characterization of local rings of finite embedding
dimension.

\begin{theorem}
\label{th:char-finite-cylcodim}
Let $X$ be a scheme of finite type over a perfect field. 
For every $\alpha \in X_\infty$, we have 
\[
	\embdim(\mathcal O_{X_\infty,\alpha}) < \infty
\] 
if and only if $\alpha$ is a constructible point and is not contained in $(\Sing
X)_\infty$.
\end{theorem}

\begin{proof}
If $\alpha \in (\Sing X)_\infty$ then $\embdim(\mathcal O_{X_\infty,\alpha}) = \infty$
by \cref{th:embdim=infty}. 

Assume then that $\alpha \not\in (\Sing X)_\infty$. This implies that $X$ is
reduced and irreducible at the generic point $\xi = \alpha(\eta)$. By
\cref{th:BTR-inclusion,t:stable-point-irred-comp}, 
we can replace $X$ with its irreducible component
containing $\xi$ and thus assume that it is a variety. Then \cref{th:embdim=cylcodim} gives us 
$\embdim(\mathcal O_{X_\infty,\alpha}) =
\jetcodim(\alpha)$, and we conclude from the fact that $\jetcodim(\alpha) <
\infty$ if and only if $\alpha$ is stable by \cref{th:stable-points}.
\end{proof}

\begin{remark}
\label{th:min-primes-inf-embdim}
By \cite[Theorem~2.9]{Reg09}, if $X$ is a variety defined over a
perfect field of positive characteristic, then $X_\infty$ has finitely many
irreducible components only one of which is not contained in $(\Sing
X)_\infty$. An example where $X_\infty$ has more than one component 
is given by the $p$-fold Whitney umbrella $X = \{xy^p = z^p\} \subset \mathbb A^3$ in 
characteristic $p$, see \cite[Example~8.1]{dF}. \Cref{th:char-finite-cylcodim} implies that if
$\alpha$ is the generic point of an irreducible component of $X_\infty$ that is contained
in $(\Sing X)_\infty$, then $\mathcal O_{X_\infty,\alpha}$, 
which is a zero dimensional ring, has infinite embedding dimension.
\end{remark}

\begin{corollary}
\label{th:embdim=embdim+Jac_X}
Let $X$ be a variety over a perfect field. For any $\alpha \in X_\infty$ 
and $n \in \mathbb N$, let $\alpha_n = \pi_n(\alpha)$ be the truncation of $\alpha$. 
If $\embdim(\mathcal O_{X_\infty, \alpha}) = \infty$, then 
$\embdim(\mathcal O_{X_n, \alpha_n})$ becomes arbitrarily large as $n$ increases. 
Otherwise, we have
\[
    \embdim(\mathcal O_{X_\infty, \alpha}) 
    = \embdim(\mathcal O_{X_n, \alpha_n}) - \ord_\alpha(\Jac_X)
\]
for all sufficiently large integers $n$. 
\end{corollary}

\begin{proof}
For short, let $d_n := d(\alpha_n,\Omega_{X/k})$ be the Betti number.
If $\embdim(\mathcal O_{X_\infty, \alpha}) = \infty$, then 
$\jetcodim(\alpha,X_\infty) = \infty$ by \cref{th:embdim=cylcodim}, and hence, 
since $d_n \ge \dim(X)$ for all $n$ and the sequence of numbers $\ord_\alpha(J_{d_n})$
stabilizes for $n$ large enough,
$\embdim(\mathcal O_{X_n, \alpha_n})$ goes to $\infty$ as $n \to \infty$ by \cref{th:emb-dim-jets-gen}. 
Otherwise, $\alpha$ is a stable point and $d_n = \dim(X)$ for 
$n \gg 1$ by \cref{th:char-finite-cylcodim}, and
hence the corollary follows 
by \cref{th:emb-dim-jets-gen,th:embdim-limit:equality}.
\end{proof}

\begin{corollary}
Let $X$ be a variety over a perfect field and
$f \colon Y \to X$ a resolution of singularities. 
Then for every $\beta \in Y_\infty$, 
letting $\alpha = f_\infty(\beta)$, we have
\[
    \jetcodim(\alpha, X_\infty)
    =
    \jetcodim(\beta, Y_\infty)
    +
    \ord_{\beta}(\Jac_f).
\]
\end{corollary}

\begin{proof}
By \cref{th:embdim=cylcodim,th:BTR}.
\end{proof}

One of the nice features of local rings of finite embedding dimension comes from
the following elementary fact.

\begin{lemma}
\label{th:noeth=fin-emb-dim}
For any scheme $Z$ over a field, the completion $\widehat{\mathcal O_{Z,z}}$ of
the local ring of $Z$ at a point $z$ is Noetherian if and only if
$\embdim(\mathcal O_{Z,z}) < \infty$.
\end{lemma}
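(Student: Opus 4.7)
The plan is to reduce the lemma to a purely local-algebra question and then apply a standard graded-ring argument. Writing $R = \mathcal O_{Z,z}$ with maximal ideal $\mathfrak m$, the identity $\widehat R/\widehat{\mathfrak m}^k = R/\mathfrak m^k$ gives canonical isomorphisms of residue fields and of cotangent spaces $\mathfrak m/\mathfrak m^2 \simeq \widehat{\mathfrak m}/\widehat{\mathfrak m}^2$. Hence $\embdim(R) = \embdim(\widehat R)$, and the lemma reduces to the assertion that a complete local ring is Noetherian if and only if its cotangent space is finite-dimensional over the residue field. One direction is immediate: if $\widehat R$ is Noetherian then $\widehat{\mathfrak m}$ is finitely generated, and so is its quotient $\widehat{\mathfrak m}/\widehat{\mathfrak m}^2$ over the residue field.

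For the converse, assume $n := \dim_k(\mathfrak m/\mathfrak m^2) < \infty$ and lift a basis to elements $x_1,\ldots,x_n \in \mathfrak m$. The first step is to show that these elements generate $\widehat{\mathfrak m}$ as an ideal of $\widehat R$. Starting from the relation $\widehat{\mathfrak m} = (x_1,\ldots,x_n)\widehat R + \widehat{\mathfrak m}^2$ and iterating inside successively higher powers of $\widehat{\mathfrak m}$, any $y \in \widehat{\mathfrak m}$ can be written as a sum of partial expressions $\sum_i a_i^{(k)} x_i$ with coefficients $a_i^{(k)} \in \widehat{\mathfrak m}^k$ and remainder in $\widehat{\mathfrak m}^{k+2}$; completeness of $\widehat R$ then allows the resulting Cauchy series $\sum_k a_i^{(k)}$ to converge, exhibiting $y$ as an $\widehat R$-linear combination of the $x_i$.

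With $\widehat{\mathfrak m} = (x_1,\ldots,x_n)\widehat R$ in hand, the associated graded ring $\operatorname{gr}_{\widehat{\mathfrak m}}(\widehat R)$ is a quotient of the polynomial ring $k[T_1,\ldots,T_n]$ via $T_i \mapsto x_i$, and in particular Noetherian. The standard fact that a ring complete and separated with respect to an ideal whose associated graded ring is Noetherian must itself be Noetherian then concludes the argument. The only subtlety worth flagging is the need to verify that $\widehat R$ is genuinely $\widehat{\mathfrak m}$-adically complete and separated, which again comes for free from $\widehat R/\widehat{\mathfrak m}^k = R/\mathfrak m^k$; no single step is hard, the whole proof being a repackaging of classical commutative algebra.
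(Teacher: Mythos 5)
Your overall strategy is the right one---it is essentially a re-proof of the Stacks Project lemma (Tag 0315) that the paper simply cites for the hard direction---but the justification you give for the ``subtleties'' is where the actual content lies, and as written it is circular. For a non-Noetherian local ring $(R,\mathfrak m)$ (and the local rings of $X_\infty$ are typically non-Noetherian, so this is precisely the case of interest), the identity you start from is false as stated: what is true is $\widehat R/\widehat{\mathfrak m^k}\cong R/\mathfrak m^k$, where $\widehat{\mathfrak m^k}:=\ker(\widehat R\to R/\mathfrak m^k)$ is the completion of the ideal $\mathfrak m^k$, and one only has an inclusion $\widehat{\mathfrak m}^{\,k}\subseteq\widehat{\mathfrak m^k}$ of it with the $k$-th power of $\widehat{\mathfrak m}$, which can be strict. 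Consequently: (a) you only get a canonical surjection $\widehat{\mathfrak m}/\widehat{\mathfrak m}^{\,2}\twoheadrightarrow\mathfrak m/\mathfrak m^2$, not an isomorphism---this is harmless, since the surjection is all that either direction of the lemma needs, and it is exactly what the paper uses for the ``Noetherian $\Rightarrow$ finite embedding dimension'' direction; but (b) the claim that $\widehat R$ is $\widehat{\mathfrak m}$-adically complete and separated does \emph{not} come for free: it amounts to the comparison $\widehat{\mathfrak m}^{\,k}=\widehat{\mathfrak m^k}$, which is a consequence of, not an input to, the finite-generation statement your successive-approximation argument is trying to prove. Likewise the starting relation $\widehat{\mathfrak m}=(x_1,\dots,x_n)\widehat R+\widehat{\mathfrak m}^{\,2}$ is not immediate; what is immediate is the version with $\widehat{\mathfrak m^2}$ in place of $\widehat{\mathfrak m}^{\,2}$.

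The repair is standard but must be made explicit: run the approximation with respect to the filtration $F^k:=\widehat{\mathfrak m^k}$, for which $\widehat R$ genuinely is complete and separated, using that $F^k/F^{k+1}\cong\mathfrak m^k/\mathfrak m^{k+1}$ is spanned over the residue field by the degree-$k$ monomials in $x_1,\dots,x_n$. This yields $\widehat{\mathfrak m}=F^1=(x_1,\dots,x_n)\widehat R$, and only \emph{a posteriori} $\widehat{\mathfrak m}^{\,k}=F^k$; your associated-graded argument (or the quoted fact that a ring complete and separated for a filtration with Noetherian associated graded is Noetherian) then finishes the proof. Alternatively, do as the paper does and invoke the Stacks Project lemma directly, reserving for yourself only the easy surjection in (a).
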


\begin{proof}
This is in fact a general result about completions of local rings. Let
$(\widehat R,\widehat{\mathfrak m})$ be the $\mathfrak m$-adic completion of a
local ring $(R,\mathfrak m)$. If $\mathfrak m/\mathfrak m^2$ is finite
dimensional, then $\widehat{\mathfrak m}$ is finitely generated by
\cite[\href{http://stacks.math.columbia.edu/tag/0315}{Tag 0315}]{SP}, and this
implies that $\widehat R$ is Noetherian. The converse follows by the fact that
since $\widehat{\mathfrak m}^2 \subset \widehat{\mathfrak m^2}$, there is
always a surjection $\widehat{\mathfrak m}/\widehat{\mathfrak m}^2 \to
\mathfrak m/\mathfrak m^2$.
\end{proof}

The following property is an immediate consequence of \cref{th:char-finite-cylcodim}. 

\begin{corollary}
\label{th:Noetherian}
Let $X$ be a reduced scheme of finite type over a perfect field. The completion 
$\widehat{\mathcal O_{X_\infty,\alpha}}$
of the local ring at a point $\alpha \in X_\infty$ is Noetherian if and only if
$\alpha$ is a constructible point and is not contained in $(\Sing X)_\infty$.
\end{corollary}

\begin{proof}
From \cref{th:char-finite-cylcodim,th:noeth=fin-emb-dim}.
\end{proof}

When $X$ is a variety, the fact that that the completion of the local ring at a stable point
$\alpha \in X_\infty$ is Noetherian
is a result of Reguera. It follows from \cite[Corollary~4.6]{Reg06},
which proves 
that $\widehat{\mathcal O_{(X_\infty)_\red,\alpha}}$
is Noetherian (cf.\ \cite[\S2.3, (vii)]{MR}), and \cite[Theorem~3.13]{Reg09},
which proves that there is an isomorphism
$\widehat{\mathcal O_{X_\infty,\alpha}} \simeq \widehat{\mathcal O_{(X_\infty)_\red,\alpha}}$.
Notice that this last result of Reguera is stated in characteristic zero, 
but the proof extends to all perfect fields using Hasse--Schmidt derivations. 

The Curve Selection Lemma 
\cite[Corollary~4.8]{Reg06} easily follows from
Cohen's Structure Theorem once one knows that 
these rings
are Noetherian. It is a powerful statement that allows the study of certain
containments among sets in the arc space via the use of arcs in the arc space.
All the current proofs solving the Nash
problem in dimension $2$ \cite{FdBPP12,dFD16} use the Curve Selection Lemma in
an essential way.

We conclude with the 
following property which was obtained by different methods in \cite[Proposition~4.1]{Reg09}. 
A more general statement in characteristic zero which applies to maps that are 
not necessarily birational is given in \cite[Proposition~4.5]{Reg09}. 

\begin{corollary}
\label{th:bijection-stable-points}
Let $f \colon Y \to X$ be a proper birational morphism between schemes
of finite type over a perfect field. Then the induced map $f_\infty \colon
Y_\infty \to X_\infty$ induces a bijection from 
the set of constructible points of $Y_\infty$ that are not contained in $(\Sing Y)_\infty$
and the set of constructible points of $X_\infty$ that are not contained in $(\Sing X)_\infty$. 
In particular, if $X$ and $Y$ are varieties, then $f_\infty$ induces a bijection
\[
	\{ \text{stable points $\beta\in Y_\infty$} \}
	\xrightarrow{1-1}
	\{ \text{stable points $\alpha \in X_\infty$} \}.
\]
\end{corollary}

\begin{proof}
By \cref{th:bijection-finite-embdim,th:char-finite-cylcodim}.
\end{proof}

Notice, by contrast, that the image $f_\infty(C)$ of a constructible set $C
\subset Y_\infty$ needs not be constructible in $X_\infty$. This is shown in
the next example.

\begin{example}
\label{eg:image-not-constr}
Let $f \colon Y \to X$ be the blow-up of a smooth closed point $x \in X$ of a
variety of dimension at least two, $E \subset Y$ the exceptional divisor, and $y
\in E$ a closed point. The set $C \subset Y_\infty$ of arcs with positive order
of contact with $E$ at points in $E \setminus \{y\}$ is constructible,  but its
image $f_\infty(C) \subset X_\infty$ is not constructible, since it is equal to
$W \setminus \bigcup_{i \ge 1} Z_i$ where $W$ is the set of arcs through $x$
and $Z_i$ is the set of arcs with order $i$ at $x$ and principal tangent
direction equal to $y$. 
\end{example}

\section{Maximal divisorial arcs}

In this section we study arcs that are naturally associated with divisorial
valuations. Let $X$ be a reduced scheme of finite type over a perfect field $k$. 

\begin{definition}
A \emph{valuation} on $X$ is intended to be a $k$-trivial valuation of the
function field of one of the irreducible components of $X$ with center in $X$.
A \emph{divisorial valuation} on $X$ is a valuation $v$ of the form $v =
q\ord_E$ where $q$ is a positive integer and $E$ is a prime divisor on a normal
scheme $Y$ with a morphism $f \colon Y \to X$ that is birational over a union
of irreducible components of $X$. For a divisorial valuation $v$, the number
$\widehat k_v(X) := v(\Jac_f)$ depends only on $v$ (not on the particular map
$f$), and is called the \emph{Mather discrepancy} of $v$ over $X$. When $v =
\ord_E$ (so $q=1$), we write $\widehat k_E(X)$.
\end{definition}

If we denote by $\mathcal K_X$ the sheaf of rational functions of $X$ in the
sense of \cite{Kle79}, that is, $\mathcal K_X = \bigoplus_\eta \mathcal
O_{X,\eta}$ where $\eta$ ranges among the generic points of the irreducible
components of $X$, then a valuation of $X$ can be thought as a function $v
\colon \mathcal K_X \to (-\infty,\infty]$ which restricts to a Krull valuation
on one of the summands $\mathcal O_{X,\eta}$ and is constant equal to $\infty$
on the other summands. If $v = q\ord_E$ is a divisorial valuation on $X$, then
$E$ is a divisor on one of the irreducible components of $Y$, and the component
of $X$ dominated by it corresponds to the summand of $\mathcal K_X$ where the
valuation is non-trivial. 

\begin{definition}
\label{def:max-div-arc}
A point $\alpha \in X_\infty$ is a \emph{maximal divisorial arc} if
$\ord_\alpha$ extends to a divisorial valuation on $X$ and $\alpha$ is maximal
among all points $\gamma \in X_\infty$ with $\ord_\gamma = \ord_\alpha$ (that
is, $\alpha$ is not the specialization of any other such point $\gamma$).
\end{definition}

In general, for an arc $\alpha \in X_\infty$, the function $\ord_\alpha$ is
only defined on $\mathcal O_{X,\alpha(0)}$. If $\alpha$ is a maximal divisorial
arc, then we write $\ord_\alpha = q\ord_E$ and think of it as a function on
$\mathcal K_X$. Note that for other arcs $\beta$ (for instance, if $\beta$ is
contained in $Y_\infty$ for a smaller dimensional scheme $Y \subset X$) there
may not be a natural way to extend $\ord_\beta$ to $\mathcal K_X$.

Let $f \colon Y \to X$ be a proper morphism from a normal scheme $Y$ that is
birational over a union of components. Let $E \subset Y$ be a prime divisor,
and let $E^\circ \subset E$ be the open set where both $Y$ and $E$ are smooth
and none of the other components of the exceptional locus of $f$ intersect $E$.
For any positive integer $q$, consider the \emph{contact locus}
\[
\Cont^{\ge q}(E^\circ,Y) \subset Y_\infty,
\]
which is defined to be the set of arcs in $Y$ with order of contact at least
$q$ with $E$ at a point in $E^\circ$. Since $Y$ is smooth along $E^\circ$, the
truncations $Y_m \to Y_n$ are affine bundles over an open set containing
$E^\circ$, and this implies that $\Cont^{\ge q}(E^\circ,Y)$ is
irreducible. 

The following property is well-known to experts; we include a proof for the 
convenience of the reader.

\begin{lemma}
\label{th:max-div-arc}
With the above notation, the image under $f_\infty \colon Y_\infty \to
X_\infty$ of the generic point of $\Cont^{\ge q}(E^\circ,Y)$ is a maximal
divisorial arc on $X$, and any such arc arises in this way. 
\end{lemma}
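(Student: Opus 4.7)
The plan is to combine a local coordinate computation on the smooth locus $E^\circ$ with the valuative criterion of properness used in the proof of \cref{th:same-residue}. Write $\beta$ for the generic point of $\Cont^{\ge q}(E^\circ,Y)$, $L$ for its residue field, and set $\alpha := f_\infty(\beta)$. Since $Y$ is smooth along $E^\circ$ and $E$ is principal there, I pick local coordinates $y_1,\ldots,y_d$ at the generic point $\eta_E$ of $E$ with $E = \{y_1 = 0\}$; the functor of points identifies $\Cont^{\ge q}(E^\circ,Y)$ locally with tuples $(y_1(t),\ldots,y_d(t)) \in L\llbracket t \rrbracket^d$ satisfying $\ord_t y_1(t) \ge q$, and the generic such arc has $\ord_t y_1 = q$ and $y_2,\ldots,y_d$ free. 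Hence $\beta(0) = \eta_E$, and writing any nonzero $g \in \mathcal O_{Y,\eta_E}$ as $y_1^m u$ with $u$ a unit gives $\ord_\beta(g) = qm = q\ord_E(g)$. Pulling back through $f^\sharp$ yields $\ord_\alpha = q\ord_E$ on $X$, so $\alpha$ is a divisorial arc realizing the prescribed valuation.

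For maximality, suppose $\gamma \in X_\infty$ satisfies $\ord_\gamma = q\ord_E$ and $\alpha \in \overline{\{\gamma\}}$. Because $q\ord_E$ is nontrivial on a component of $X$, the arc $\gamma$ is not contained in the image of the exceptional locus of $f$, so the argument in the proof of \cref{th:same-residue} supplies a unique lift $\widetilde\gamma \in Y_\infty$ with $f_\infty(\widetilde\gamma) = \gamma$. Compatibility through $f^\sharp$ forces $\ord_{\widetilde\gamma} = q\ord_E$ on $Y$, so the center $\widetilde\gamma(0)$ of this valuation is $\eta_E$, which lies in $E^\circ$ by its very definition, and $\ord_{\widetilde\gamma}(I_E) = q$. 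Thus $\widetilde\gamma \in \Cont^{\ge q}(E^\circ,Y)$, and since $\beta$ is the generic point of this irreducible set, $\widetilde\gamma \in \overline{\{\beta\}}$. Applying $f_\infty$ gives $\gamma = f_\infty(\widetilde\gamma) \in \overline{\{f_\infty(\beta)\}} = \overline{\{\alpha\}}$; combining with $\alpha \in \overline{\{\gamma\}}$ forces $\gamma = \alpha$.

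For the converse, let $\alpha' \in X_\infty$ be any maximal divisorial arc; by \cref{def:max-div-arc} its valuation is $q\ord_E$ for some triple $(f,Y,E)$ as in the statement. Applying the lifting argument above with $\gamma := \alpha'$ produces $\widetilde{\alpha'} \in \Cont^{\ge q}(E^\circ,Y)$ specializing from $\beta$, hence $\alpha' = f_\infty(\widetilde{\alpha'}) \in \overline{\{\alpha\}}$; maximality of $\alpha'$ together with $\ord_\alpha = q\ord_E = \ord_{\alpha'}$ forces $\alpha = \alpha'$. The main technical point is the bookkeeping around the lift $\widetilde\gamma$: I must be sure that the valuative lift preserves the valuation exactly and that its center lies in $E^\circ$ rather than on a neighboring exceptional component, which rests on the uniqueness of the lift furnished by properness together with the fact that $E^\circ$ was defined precisely to exclude intersections with the other components of the exceptional locus of $f$.
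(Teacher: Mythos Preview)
Your argument is correct and follows essentially the same route as the paper: compute $\ord_\beta = q\ord_E$ directly, then use the valuative criterion of properness to lift any competing arc $\gamma$ with $\ord_\gamma = q\ord_E$ into $\Cont^{\ge q}(E^\circ,Y)$, forcing it to specialize from $\beta$ and hence from $\alpha$. The paper is terser (it declares $\ord_\beta = q\ord_E$ ``elementary'' and omits the verification that $\widetilde\gamma(0) = \eta_E$), while you spell out the local coordinate picture and the center computation; note also that your extra hypothesis $\alpha \in \overline{\{\gamma\}}$ in the maximality step is never actually used until the final line, so your argument in fact proves the same stronger statement the paper does, namely that every $\gamma$ with $\ord_\gamma = q\ord_E$ lies in $\overline{\{\alpha\}}$.
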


\begin{proof}
Let $\beta$ be the generic point of $\Cont^{\ge q}(E^\circ,Y)$ and $\alpha =
f_\infty(\beta)$. It is elementary to see that $\ord_\beta = q\ord_E$. By the
definition of $f_\infty$, we have $\ord_\alpha = \ord_\beta$, and therefore
$\ord_\alpha = q\ord_E$.

We may assume without loss of generality that $f$ is dominant (this is not
essential, but it makes the wording of the proof more clear). If $\gamma \in X$
is any arc with $\ord_\gamma = \ord_\alpha$, then $\gamma$ cannot be fully
contained in the indeterminacy locus of $f^{-1}$, and therefore it lifts to an
arc $\widetilde\gamma$ on $Y$ by the valuative criterion of properness. Since
$\ord_{\widetilde\gamma} = q\ord_E$, we see that $\widetilde\gamma$ must
dominate the generic point of $E$ and hence lie in $\Cont^{\ge q}(E^\circ,Y)$.
It follows that $\gamma$ is a specialization of $\alpha$, and therefore
$\alpha$ is a maximal divisorial arc. This argument also shows that any maximal
divisorial arc arises in this way. 
\end{proof}

\begin{theorem}
\label{th:max-div-arc-embdim}
Let $X$ be a reduced scheme of finite type over a perfect field. For every
divisorial valuation $q\ord_E$ on $X$ there exists a unique maximal divisorial
arc $\alpha \in X_\infty$ with $\ord_\alpha = q\ord_E$. Moreover:
\[
\embdim(\mathcal O_{X_\infty,\alpha}) = q\big(\widehat k_E(X)+1\big).
\]
\end{theorem}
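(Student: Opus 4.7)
The plan is to reduce the problem to a smooth model where the embedding dimension can be computed from the codimension of a contact set, and then transfer the computation back to $X$ via the birational transformation rule (Theorem~\ref{th:BTR}).

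First I would establish existence and uniqueness of $\alpha$ using \cref{th:max-div-arc}. By definition of divisorial valuation, I can choose a proper birational (over a union of components) morphism $f \colon Y \to X$ from a normal $Y$ realizing $q\ord_E$, and set $\beta$ to be the generic point of $\Cont^{\ge q}(E^\circ,Y)$, so that $\alpha = f_\infty(\beta)$. Uniqueness is already implicit in the proof of \cref{th:max-div-arc}: any arc $\gamma \in X_\infty$ with $\ord_\gamma = q\ord_E$ lifts via the valuative criterion of properness to an arc on $Y$ which must dominate the generic point of $E$ with contact at least $q$, hence specializes to $\alpha$; thus two maximal arcs with the same valuation must coincide.

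Next, since $E^\circ$ is contained in the smooth locus of $Y$, in particular $Y$ is smooth at $\beta(0)$, so \cref{th:BTR} applies and gives
\[
    \embdim(\mathcal O_{X_\infty,\alpha})
    = \embdim(\mathcal O_{Y_\infty,\beta}) + \ord_\beta(\Jac_f).
\]
Because $\ord_\beta = q\ord_E$ as valuations (this is how $\beta$ is constructed), we have $\ord_\beta(\Jac_f) = q\ord_E(\Jac_f) = q\,\widehat{k}_E(X)$. Thus the identity to be proved becomes the claim
\[
    \embdim(\mathcal O_{Y_\infty,\beta}) = q.
\]

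The last step is this smooth-model computation. Since $Y$ is reduced, \cref{th:embdim=cylcodim-reduced-case} identifies $\embdim(\mathcal O_{Y_\infty,\beta})$ with $\jetcodim(\beta,Y_\infty)$. Working locally on a neighborhood of a point of $E^\circ$, I choose an étale system of parameters in which $E$ is cut out by a single coordinate; the condition of contact order $\ge q$ along $E$ at a point of $E^\circ$ then translates, on the truncations, to the vanishing of exactly $q$ coefficients, so that for every $n \ge q-1$ the image $\pi_n(\Cont^{\ge q}(E^\circ,Y))$ is irreducible of codimension $q$ in the smooth locus of $Y_n$. Consequently $\expcodim(\beta_n, Y_n) = q$ for all large $n$, and passing to the limit gives $\jetcodim(\beta,Y_\infty) = q$. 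Combining everything yields $\embdim(\mathcal O_{X_\infty,\alpha}) = q + q\,\widehat{k}_E(X) = q(\widehat{k}_E(X)+1)$.

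The main (minor) technical point is the codimension computation for $\Cont^{\ge q}(E^\circ,Y)$, which reduces to a straightforward local calculation on the smooth locus of $Y$; everything else is a direct assembly of \cref{th:max-div-arc,th:BTR,th:embdim=cylcodim-reduced-case}.
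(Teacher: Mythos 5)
Your proposal is correct and follows essentially the same route as the paper: existence and uniqueness via \cref{th:max-div-arc}, the birational transformation rule \cref{th:BTR} applied at $\beta$ (where $Y$ is smooth), and the computation $\embdim(\mathcal O_{Y_\infty,\beta}) = q$ on the smooth model, which the paper also dispatches as an easy local calculation. The details you supply for the codimension of $\Cont^{\ge q}(E^\circ,Y)$ and the identity $\ord_\beta(\Jac_f) = q\,\widehat k_E(X)$ are accurate.
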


\begin{proof}
The first assertion is well-known and can be viewed
as a direct consequence of \cref{th:max-div-arc}. The
formula for the embedding dimension follows from \cref{th:BTR}, after we notice
that if $\beta$ is the generic point of $\Cont^{\ge q}(E^\circ,Y)$ then
$\embdim(\mathcal O_{Y_\infty,\beta}) = q$, which is an easy computation
given that $Y$ is smooth. 
\end{proof}

From \cref{th:char-finite-cylcodim}, we recover the following fact about
maximal divisorial arcs due to \cite{ELM04}
when $X$ is a smooth variety and \cite{dFEI08,Reg09} for arbitrary varieties.

\begin{corollary}
\label{th:max-div-arcs-stable}
Let $X$ be a reduced scheme of finite type. Then every maximal divisorial arc
$\alpha \in X_\infty$ is a constructible point and is not contained in $(\Sing X)_\infty$.
\end{corollary}

\begin{proof}
By \cref{th:max-div-arc-embdim}, the local ring $\mathcal O_{X_\infty,\alpha}$
has finite embedding dimension, and hence $\alpha$ is a constructible point not contained in $(\Sing X)_\infty$
by \cref{th:char-finite-cylcodim}.
\end{proof}

Since by \cref{th:embdim=cylcodim} if $X$ is a variety then we have $\jetcodim(\alpha) =
\embdim(\mathcal O_{X_\infty,\alpha})$, we obtain the next corollary
which recovers the formula in \cite[Theorem~3.8]{dFEI08}.

\begin{corollary}
\label{th:max-div-arc-cylcodim}
With the same assumptions as in \cref{th:max-div-arc-embdim}, 
if $X$ is a variety then we have
\[
\jetcodim(\alpha, X_\infty) = q\big(\widehat k_E(X)+1\big).
\]
\end{corollary}

The following related result has been recently proved, by different methods, 
by Mourtada and Reguera.

\begin{theorem}[\cite{Reg,MR}]
\label{th:MR}
Let $X$ be a variety defined over a field of characteristic zero
and $\alpha \in X_\infty$ is a maximal divisorial point
corresponding to a valuation $q\ord_E$. Then 
\[
	\embdim(\widehat{\mathcal O_{X_\infty,\alpha}}) 
	= \embdim(\mathcal O_{(X_\infty)_\red,\alpha}) 
	= q(\widehat k_E(X) + 1).
\]
\end{theorem}

\begin{remark}
\label{r:MR-Reg09}
The proof of \cref{th:max-div-arc-embdim} does not use the fact that
maximal divisorial arcs are stable points, which is here deduced as a corollary
(see \cref{th:max-div-arcs-stable}). Granting this well-known fact from the 
start, if $X$ is a variety over a field of characteristic zero then 
one can also deduce \cref{th:max-div-arc-embdim}
from \cref{th:MR}. Indeed, under these assumptions, 
if $\alpha \in X_\infty$
is a stable point and we denote by $I \subset \mathcal O_{X_\infty,\alpha}$
and $\bar I \subset \mathcal O_{(X_\infty)_\red,\alpha}$ the respective
maximal ideals, 
then using the isomorphism 
$\widehat{\mathcal O_{X_\infty,\alpha}} \simeq \widehat{\mathcal O_{(X_\infty)_\red,\alpha}}$
proved in \cite[Theorem~3.13]{Reg09} and the fact that the
$\bar I$-adic topology of $\mathcal O_{(X_\infty)_\red,\alpha}$ is separated
by \cite[Corollary~4.3]{Reg09}, 
one deduces that the natural surjection $I/I^2 \to \bar I/\bar I^2$
is an isomorphism and hence 
\[
\embdim(\mathcal O_{X_\infty,\alpha}) = \embdim(\mathcal O_{(X_\infty)_\red,\alpha}).
\]
\end{remark}

\Cref{th:max-div-arc-embdim} can be used to control Mather discrepancies. For
example, the following result is an immediate consequence of
\cref{th:embdim>=dimX}.

\begin{corollary}
\label{th:bound-on-^kE}
Let $X$ be a reduced and equidimensional scheme of finite type over a perfect
field, and consider a prime divisor $E$ over $X$ whose center in $X$ is a
closed point. Then
\[
    \widehat k_E(X) + 1 \geq \dim(X).
\]
\end{corollary}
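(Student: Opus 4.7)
The proof is essentially a combination of two results already proved in the paper. My plan is to take the maximal divisorial arc associated with $\ord_E$ and feed it into the lower bound of \cref{th:embdim>=dimX}.

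First, I would apply \cref{th:max-div-arc-embdim} in the case $q = 1$: there exists a unique maximal divisorial arc $\alpha \in X_\infty$ with $\ord_\alpha = \ord_E$, and its embedding dimension is
\[
    \embdim(\mathcal O_{X_\infty,\alpha}) = \widehat k_E(X) + 1.
\]

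Second, I need to identify the $0$-th truncation $\alpha_0 \in X_0 = X$. By \cref{th:max-div-arc}, $\alpha = f_\infty(\beta)$, where $\beta$ is the generic point of $\Cont^{\ge 1}(E^\circ, Y)$ for some proper birational morphism $f \colon Y \to X$ from a normal scheme $Y$ containing $E$. Thus $\beta(0)$ is the generic point of $E$, and $\alpha(0) = f(\beta(0))$ is the generic point of the center of $E$ in $X$. By hypothesis this center is a closed point, so $\alpha(0)$ is a closed point of $X$; since $k$ is perfect, its residue field $L_0$ is a finite (and separable) extension of $k$, giving $\dim(\alpha_0) = \trdeg(L_0/k) = 0$.

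Finally, \cref{th:embdim>=dimX} applied to $\alpha$ gives
\[
    \widehat k_E(X) + 1 = \embdim(\mathcal O_{X_\infty,\alpha}) \geq \dim(X) - \dim(\alpha_0) = \dim(X),
\]
which is the desired inequality. There is no real obstacle here, since both ingredients are already established; the only point that requires a moment of care is the identification $\alpha(0) = \text{center of } E$, which follows directly from the construction in \cref{th:max-div-arc}.
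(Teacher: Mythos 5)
Your proof is correct and follows exactly the route the paper intends: the paper derives this corollary as an immediate consequence of \cref{th:max-div-arc-embdim} (with $q=1$) combined with \cref{th:embdim>=dimX}, using that the center of $E$ being a closed point forces $\dim(\alpha_0)=0$. Your identification of $\alpha(0)$ with the center of $E$ via \cref{th:max-div-arc} is the right (and only) point needing verification, and you handle it correctly.
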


In fact, using \cref{th:lambda} (with $n=0$ and $n=1$) and
\cref{r:equalities}, it is not hard to see that 
if equality holds in this formula then the valuation $\ord_E$ 
has center of codimension 1 in the normalized blow-up of the maximal ideal
$\mathfrak m \subset \mathcal O_{X,x}$ at $x$, and $\ord_E(\mathfrak m) = 1$.

These facts should be compared with the following result of Ishii.
In the statement of the theorem,
$\widehat{\operatorname{mld}}_x(X)$ denotes the \emph{minimal Mather log
discrepancy} of $X$ at the point $x$, which is defined as the infimum of the
Mather log discrepancies $\widehat k_E(X)+1$ as $E$ ranges among all divisors
$E$ over $X$ with center $x$. 

\begin{theorem}[\protect{\cite[Theorem~1.1]{Ish13}}]
\label{th:bound-on-^mld}
Let $X$ be a variety over a perfect field, and $x \in X$ a closed point. 
Then 
\[
\widehat{\operatorname{mld}}_x(X) \geq \dim(X)
\]
and equality holds if and only if the normalized blow-up of the maximal ideal
$\mathfrak m \subset \mathcal O_{X,x}$ at $x$ extracts a divisor $E$ over $X$
such that $\ord_E(\mathfrak m) = 1$.
\end{theorem}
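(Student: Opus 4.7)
The plan is to derive Ishii's theorem from the tools of this paper. The inequality $\widehat{\operatorname{mld}}_x(X) \ge \dim X$ is immediate: by definition $\widehat{\operatorname{mld}}_x(X) = \inf_E(\widehat k_E(X)+1)$ over divisors $E$ centered at $x$, and each term is $\ge \dim X$ by \cref{th:bound-on-^kE}. The equality characterization will be obtained by combining the remark preceding the theorem with an explicit local computation of $\widehat k_E(X)$ at a generic point of a divisor extracted by the normalized blow-up of $\mathfrak m$.

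For the converse direction, let $\nu \colon \widetilde X \to X$ be the normalized blow-up of $\mathfrak m$ and $E \subset \widetilde X$ a prime exceptional divisor with $\ord_E(\mathfrak m) = 1$. It suffices to show $\widehat k_E(X) = \dim X - 1$, since then $\widehat{\operatorname{mld}}_x(X) \le \widehat k_E(X) + 1 = \dim X$ combines with the inequality to give equality. At the generic point $\eta$ of $E$, normality together with perfectness of $k$ makes $\widetilde X$ smooth, so choose local coordinates $y_1, \ldots, y_d$ with $E = (y_1)$; the hypothesis then gives $\mathfrak m \mathcal O_{\widetilde X, \eta} = (y_1)$. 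Pick minimal generators $f_1, \ldots, f_N$ of $\mathfrak m$ and write $\nu^* f_i = y_1 g_i$ with $g_1$ a unit at $\eta$. The presentation matrix of $\Omega_{\widetilde X/X, \eta}$ has $(k,i)$-entry $g_i\,\delta_{k,1} + y_1\,\partial g_i / \partial y_k$, and a Smith normal form analysis (using the universal property of the Rees algebra of $\mathfrak m$ to ensure the residual Jacobian modulo $(y_1)$ and modulo the direction picked out by $g_1$ has maximal rank $d-1$) shows that every invariant factor equals $y_1$. Hence $\Fitt^0 \Omega_{\widetilde X/X, \eta} = (y_1^{d-1})$ and $\widehat k_E(X) = d-1$.

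For the forward direction, suppose $\widehat{\operatorname{mld}}_x(X) = \dim X$ and let $E$ realize the infimum. The remark preceding the theorem, applying \cref{th:lambda} with $n = 0, 1$ to the maximal divisorial arc for $\ord_E$, forces $\ord_E(\mathfrak m) = 1$ and forces the center of $\ord_E$ on $\widetilde X$ to have codimension one, so $E$ is one of the divisors extracted by the normalized blow-up. The genuinely delicate point is the attainment of the infimum; I would handle it by reducing to the normalized blow-up via \cref{th:BTR-gen}, which lets one compare $\widehat k_E(X)$ for arbitrary $E$ with $\widehat k_E(\widetilde X) + \ord_E(\Jac_\nu)$, showing that among divisorial valuations centered over $x$ the smallest Mather log discrepancy is necessarily achieved on some prime component of the exceptional divisor of $\widetilde X$, a finite set.

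The main obstacle is the rank claim in the converse's Smith normal form computation: at $\eta$ the residual Jacobian modulo $(y_1)$, after quotienting by the direction picked out by the unit $g_1$, must have rank $d-1$. This encodes the geometric fact that the normalized blow-up captures $\dim X - 1$ worth of tangential directions at $x$ along $E$; its verification relies essentially on the universal property of the Rees algebra, with perfectness of $k$ ensuring that no inseparability issues arise. The attainment of the infimum in the forward direction is a secondary but still subtle point, addressed here via \cref{th:BTR-gen}.
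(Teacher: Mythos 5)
First, a point of comparison: the paper does not prove this statement — it is quoted from Ishii \cite{Ish13}, and the surrounding text only observes that the inequality is an immediate consequence of \cref{th:bound-on-^kE}, that the necessity of the two conditions in the equality criterion can be read off from \cref{th:lambda} with $n=0,1$, and that the full result can alternatively be obtained by a general linear projection argument in the spirit of \cite{dFM15}. Your treatment of the inequality and of the ``only if'' direction follows the same lines the paper indicates. As a proof of the full theorem, however, your proposal leaves its two hardest steps open. The main one is the rank claim in the ``if'' direction. Writing $\nu^*f_i = y_1 g_i$ near the generic point $\eta$ of $E$ (with $d=\dim X$), every $d\times d$ minor of the presentation matrix of $\Omega_{\widetilde X/X}$ is indeed divisible by $y_1^{d-1}$, so $\widehat k_E(X)\ge d-1$ comes for free; but the inequality you actually need, $\widehat k_E(X)\le d-1$, requires that after dividing by $y_1^{d-1}$ and reducing modulo $y_1$ some minor survives, i.e.\ that the $\bar g_i$ and their differentials span a rank-$d$ space at $\eta$. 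This is equivalent to the map from $E$ to its image in the exceptional divisor of $\operatorname{Bl}_{\mathfrak m}X\subset\mathbb P^{N-1}$ being generically unramified, and neither ``the universal property of the Rees algebra'' nor perfectness of $k$ delivers that: the normalization $\widetilde X\to\operatorname{Bl}_{\mathfrak m}X$ is birational on total spaces, but its restriction to a prime divisor $E$ can fail to be birational onto its image, and in positive characteristic it can be inseparable even over an algebraically closed field — exactly the situation in which the residual Jacobian drops rank and all relevant minors vanish to order $>d-1$. So the computation $\widehat k_E(X)=d-1$ is asserted rather than proved, and it is precisely where Ishii's argument (or the projection argument the paper alludes to) does real work.

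The second gap is the attainment of the infimum in the ``only if'' direction. The remark based on \cref{th:lambda} applies to a single divisor $E$ with $\widehat k_E(X)+1=\dim X$, so you must first know that the infimum defining $\widehat{\operatorname{mld}}_x(X)$ is achieved. Your appeal to \cref{th:BTR-gen} does not accomplish this: combined with \cref{th:max-div-arc-embdim} it yields, for a divisor $F$ over $X$, the bounds $\widehat k_F(\widetilde X)\le\widehat k_F(X)\le\widehat k_F(\widetilde X)+\ord_F(\Jac_\nu)$, and no combination of these immediately shows that some exceptional component $E$ of $\widetilde X$ satisfies $\widehat k_E(X)\le\widehat k_F(X)$ for every $F$ centered at $x$ (note also that $\widetilde X$ need not be smooth at the center of $F$, so the exact chain rule of \cref{th:BTR} is unavailable). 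One would need to show that any $F$ whose center on $\widetilde X$ has codimension at least two, or with $\ord_F(\mathfrak m)\ge 2$, has $\widehat k_F(X)+1>\dim X$ — which is essentially the content of the theorem itself. As written, the proposal is a reasonable plan whose two decisive steps are flagged as obstacles but not overcome.
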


Mather log discrepancies are closely related to the usual log discrepancies, 
which are defined on $\mathbb Q$-Gorenstein varieties. 
Minimal log discrepancies are conjectured to be bounded 
above by the dimension of the variety and to characterize smooth points \cite{Sho02}. 
The above result of Ishii shows the different behavior of minimal Mather log discrepancies, 
and has useful applications in connection to Shokurov's conjecture and the 
study of isolated singularities with simple links \cite{dFT}. 

\Cref{th:bound-on-^kE} immediately implies the first statement of 
\cref{th:bound-on-^mld}. Alternatively, the full result can also be obtained by
analyzing the behavior of Mather discrepancies under 
general linear projections, in the spirit of \cite[Proposition~2.4]{dFM15};
the argument is essentially contained in the proof of \cite[Proposition~4.6]{dF17}.


\begin{bibdiv}
\begin{biblist}

\bib{EGAi}{book}{
   label={EGA\,I},
   author={Grothendieck, A.},
   author={Dieudonn\'{e}, J. A.},
   title={El\'{e}ments de g\'{e}om\'{e}trie alg\'{e}brique. I},
   language={French},
   series={Grundlehren der Mathematischen Wissenschaften [Fundamental
   Principles of Mathematical Sciences]},
   volume={166},
   publisher={Springer-Verlag, Berlin},
   date={1971},
}

\bib{EGAiii_1}{article}{
   label={EGA\,III$_1$},
   author={Grothendieck, Alexander},
   title={\'El\'ements de g\'eom\'etrie alg\'ebrique. III. \'Etude
   cohomologique des faisceaux coh\'erents. I},
   language={French},
   journal={Inst. Hautes \'Etudes Sci. Publ. Math.},
   number={11},
   date={1961},
   pages={167},
}

\bib{EGAiv_3}{article}{
   label={EGA\,IV$_3$},
   author={Grothendieck, Alexander},
   title={\'El\'ements de g\'eom\'etrie alg\'ebrique. IV. \'Etude locale des
   sch\'emas et des morphismes de sch\'emas. III},
   journal={Inst. Hautes \'Etudes Sci. Publ. Math.},
   number={28},
   date={1966},
   pages={255},
}

\bib{SP}{article}{
   label={Stacks}
   author={The {Stacks Project Authors}},
   title={Stacks Project},
   note={\url{http://stacks.math.columbia.edu}},
   year={2017},
}

\bib{Bha16}{article}{
   author={Bhatt, Bhargav},
   title={Algebraization and Tannaka duality},
   journal={Camb. J. Math.},
   volume={4},
   date={2016},
   number={4},
   pages={403--461},
}

\bib{Bat99}{article}{
   author={Batyrev, Victor V.},
   title={Non-Archimedean integrals and stringy Euler numbers of
   log-terminal pairs},
   journal={J. Eur. Math. Soc. (JEMS)},
   volume={1},
   date={1999},
   number={1},
   pages={5--33},
}

\bib{dF17}{article}{
   author={de Fernex, Tommaso},
   title={Birational rigidity of singular Fano hypersurfaces},
   journal={Ann. Sc. Norm. Super. Pisa Cl. Sci. (5)},
   volume={17},
   date={2017},
}

\bib{dF}{article}{
   author={de Fernex, Tommaso},
   title={The space of arcs of an algebraic variety},
   conference={
      title={Algebraic geometry: Salt Lake City 2015},
   },
   book={
      series={Proc. Sympos. Pure Math.},
      volume={97},
      publisher={Amer. Math. Soc., Providence, RI},
   },
   date={2018},
   pages={169--197},
}

\bib{dFD14}{article}{
   author={de Fernex, Tommaso},
   author={Docampo, Roi},
   title={Jacobian discrepancies and rational singularities},
   journal={J. Eur. Math. Soc. (JEMS)},
   volume={16},
   date={2014},
   number={1},
   pages={165--199},
}

\bib{dFD16}{article}{
   author={de Fernex, Tommaso},
   author={Docampo, Roi},
   title={Terminal valuations and the Nash problem},
   journal={Invent. Math.},
   volume={203},
   date={2016},
   number={1},
   pages={303--331},
}

\bib{dFD}{article}{
   author={de Fernex, Tommaso},
   author={Docampo, Roi},
   title={Nash blow-ups of jet schemes},
   note={To appear in Ann.\ Inst.\ Fourier, \href%
        {https://arxiv.org/abs/1712.00911}
        {\tt arXiv:1712.00911 [math.AG]}},   
}

\bib{dFEI08}{article}{
   author={de Fernex, Tommaso},
   author={Ein, Lawrence},
   author={Ishii, Shihoko},
   title={Divisorial valuations via arcs},
   journal={Publ. Res. Inst. Math. Sci.},
   volume={44},
   date={2008},
   number={2},
   pages={425--448},
}

\bib{dFM15}{article}{
   author={de Fernex, Tommaso},
   author={Musta\c t\u a, Mircea},
   title={The volume of a set of arcs on a variety},
   journal={Rev. Roumaine Math. Pures Appl.},
   volume={60},
   date={2015},
   number={3},
   pages={375--401},
}

\bib{dFT}{article}{
   author={de Fernex, Tommaso},
   author={Tu, Yu-Chao},
   title={Towards a link theoretic characterization of smoothness},
   journal={Math. Res. Lett.},
   volume={24},
   date={2017},
   number={5},
   pages={1239--1262},
}

\bib{DL99}{article}{
   author={Denef, Jan},
   author={Loeser, Fran{\c{c}}ois},
   title={Germs of arcs on singular algebraic varieties and motivic
   integration},
   journal={Invent. Math.},
   volume={135},
   date={1999},
   number={1},
   pages={201--232},
}

\bib{Dri}{article}{
   author={Drinfeld, Vladimir},
   title={On the Grinberg--Kazhdan formal arc theorem},
   note = {Preprint, \href%
        {http://arxiv.org/abs/math/0203263}
        {\tt arXiv:math/0203263 [math.AG]}}
}

\bib{EI15}{article}{
   author={Ein, Lawrence},
   author={Ishii, Shihoko},
   title={Singularities with respect to Mather-Jacobian discrepancies},
   conference={
      title={Commutative algebra and noncommutative algebraic geometry. Vol.
      II},
   },
   book={
      series={Math. Sci. Res. Inst. Publ.},
      volume={68},
      publisher={Cambridge Univ. Press, New York},
   },
   date={2015},
   pages={125--168},
}

\bib{ELM04}{article}{
   author={Ein, Lawrence},
   author={Lazarsfeld, Robert},
   author={Musta{\c{t}}{\u{a}}, Mircea},
   title={Contact loci in arc spaces},
   journal={Compos. Math.},
   volume={140},
   date={2004},
   number={5},
   pages={1229--1244},
}

\bib{EM04}{article}{
   author={Ein, Lawrence},
   author={Musta{\c{t}}{\u{a}}, Mircea},
   title={Inversion of adjunction for local complete intersection varieties},
   journal={Amer. J. Math.},
   volume={126},
   date={2004},
   number={6},
   pages={1355--1365},
}

\bib{EM09}{article}{
   author={Ein, Lawrence},
   author={Musta{\c{t}}{\u{a}}, Mircea},
   title={Jet schemes and singularities},
   conference={
      title={Algebraic geometry---Seattle 2005. Part 2},
   },
   book={
      series={Proc. Sympos. Pure Math.},
      volume={80},
      publisher={Amer. Math. Soc.},
      place={Providence, RI},
   },
   date={2009},
   pages={505--546},
}

\bib{EMY03}{article}{
   author={Ein, Lawrence},
   author={Musta{\c{t}}{\u{a}}, Mircea},
   author={Yasuda, Takehiko},
   title={Jet schemes, log discrepancies and inversion of adjunction},
   journal={Invent. Math.},
   volume={153},
   date={2003},
   number={3},
   pages={519--535},
}

\bib{FdBPP12}{article}{
   author={Fern{\'a}ndez de Bobadilla, Javier},
   author={Pe Pereira, Mar{\'{\i}}a},
   title={The Nash problem for surfaces},
   journal={Ann. of Math. (2)},
   volume={176},
   date={2012},
   number={3},
   pages={2003--2029},
}

\bib{Gre66}{article}{
   author={Greenberg, Marvin J.},
   title={Rational points in Henselian discrete valuation rings},
   journal={Inst. Hautes \'Etudes Sci. Publ. Math.},
   number={31},
   date={1966},
   pages={59--64},
}

\bib{GK00}{article}{
   author={Grinberg, Mikhail},
   author={Kazhdan, David},
   title={Versal deformations of formal arcs},
   journal={Geom. Funct. Anal.},
   volume={10},
   date={2000},
   number={3},
   pages={543--555},
}

\bib{Ish08}{article}{
   author={Ishii, Shihoko},
   title={Maximal divisorial sets in arc spaces},
   conference={
      title={Algebraic geometry in East Asia---Hanoi 2005},
   },
   book={
      series={Adv. Stud. Pure Math.},
      volume={50},
      publisher={Math. Soc. Japan, Tokyo},
   },
   date={2008},
   pages={237--249},
}

\bib{Ish13}{article}{
   author={Ishii, Shihoko},
   title={Mather discrepancy and the arc spaces},
   language={English, with English and French summaries},
   journal={Ann. Inst. Fourier (Grenoble)},
   volume={63},
   date={2013},
   number={1},
   pages={89--111},
}

\bib{IK03}{article}{
   author={Ishii, Shihoko},
   author={Koll{\'a}r, J{\'a}nos},
   title={The Nash problem on arc families of singularities},
   journal={Duke Math. J.},
   volume={120},
   date={2003},
   number={3},
   pages={601--620},
}

\bib{IR13}{article}{
   author={Ishii, Shihoko},
   author={Reguera, Ana J.},
   title={Singularities with the highest Mather minimal log discrepancy},
   journal={Math. Z.},
   volume={275},
   date={2013},
   number={3-4},
   pages={1255--1274},
}

\bib{ISW12}{article}{
   author={Ishii, Shihoko},
   author={Sannai, Akiyoshi},
   author={Watanabe, Kei-ichi},
   title={Jet schemes of homogeneous hypersurfaces},
   conference={
      title={Singularities in geometry and topology},
   },
   book={
      series={IRMA Lect. Math. Theor. Phys.},
      volume={20},
      publisher={Eur. Math. Soc., Z\"urich},
   },
   date={2012},
   pages={39--49},
}

\bib{Kle79}{article}{
   author={Kleiman, Steven L.},
   title={Misconceptions about $K_{X}$},
   journal={Enseign. Math. (2)},
   volume={25},
   date={1979},
   number={3--4},
   pages={203--206 (1980)},
}

\bib{Kol73}{book}{
   author={Kolchin, Ellis R.},
   title={Differential algebra and algebraic groups},
   note={Pure and Applied Mathematics, Vol. 54},
   publisher={Academic Press},
   place={New York},
   date={1973},
   pages={xviii+446},
}

\bib{Kon95}{book}{
   author={Kontsevich, Maxim},
   title={String cohomology},
   note={Lecture at Orsay},
   date={1995},
}

\bib{Loo02}{article}{
   author={Looijenga, Eduard},
   title={Motivic measures},
   note={S\'eminaire Bourbaki, Vol.\ 1999/2000},
   journal={Ast\'erisque},
   number={276},
   date={2002},
   pages={267--297},
}

\bib{Mat89}{book}{
   author={Matsumura, Hideyuki},
   title={Commutative ring theory},
   series={Cambridge Studies in Advanced Mathematics},
   volume={8},
   edition={2},
   note={Translated from the Japanese by M. Reid},
   publisher={Cambridge University Press, Cambridge},
   date={1989},
}

\bib{MR}{article}{
   author={Mourtada, Hussein},
   author={Reguera, Ana J.},
   title={Mather discrepancy as an embedding dimension in the space of arcs},
   journal={Publ. Res. Inst. Math. Sci.},
   volume={54},
   date={2018},
   number={1},
   pages={105--139},
}

\bib{Mus01}{article}{
   author={Musta{\c{t}}{\u{a}}, Mircea},
   title={Jet schemes of locally complete intersection canonical
   singularities},
   note={With an appendix by David Eisenbud and Edward Frenkel},
   journal={Invent. Math.},
   volume={145},
   date={2001},
   number={3},
   pages={397--424},
}

\bib{Mus02}{article}{
   author={Musta{\c{t}}{\u{a}}, Mircea},
   title={Singularities of pairs via jet schemes},
   journal={J. Amer. Math. Soc.},
   volume={15},
   date={2002},
   number={3},
   pages={599--615 (electronic)},
}
	
\bib{Nas95}{article}{
   author={Nash, John F., Jr.},
   title={Arc structure of singularities},
   note={A celebration of John F. Nash, Jr.},
   journal={Duke Math. J.},
   volume={81},
   date={1995},
   number={1},
   pages={31--38 (1996)},
}

\bib{Reg06}{article}{
   author={Reguera, Ana J.},
   title={A curve selection lemma in spaces of arcs and the image of the
   Nash map},
   journal={Compos. Math.},
   volume={142},
   date={2006},
   number={1},
   pages={119--130},
}

\bib{Reg09}{article}{
   author={Reguera, Ana J.},
   title={Towards the singular locus of the space of arcs},
   journal={Amer. J. Math.},
   volume={131},
   date={2009},
   number={2},
   pages={313--350},
}

\bib{Reg}{article}{
   author={Reguera, Ana J.},
   title={Coordinates at stable points of the space of arcs},
   journal={J. Algebra},
   volume={494},
   date={2018},
   pages={40--76},
}

\bib{Sho02}{article}{
   author={Shokurov, Vyacheslav},
   title={Letters of a bi-rationalist. IV. Geometry of log flips},
   conference={
      title={Algebraic geometry},
   },
   book={
      publisher={de Gruyter, Berlin},
   },
   date={2002},
   pages={313--328},
}

\bib{Voj07}{article}{
   author={Vojta, Paul},
   title={Jets via Hasse-Schmidt derivations},
   conference={
      title={Diophantine geometry},
   },
   book={
      series={CRM Series},
      volume={4},
      publisher={Ed. Norm., Pisa},
   },
   date={2007},
   pages={335--361},
}

\bib{Zhu15}{article}{
   author={Zhu, Zhixian},
   title={Jet schemes and singularities of $W_d^r(C)$ loci},
   journal={Comm. Algebra},
   volume={43},
   date={2015},
   number={8},
   pages={3134--3159},
}

\end{biblist}
\end{bibdiv}
\vfill
\end{document}